\newtheorem{defi}{Definition}[section]
\newtheorem{theo}[defi]{Theorem}
\newtheorem{propo}[defi]{Proposition}
\newtheorem{lem}[defi]{Lemma}
\newtheorem{corol}[defi]{Corollary}
\newtheorem{rem}[defi]{Remark}
\DeclareMathOperator{\matr}{mat}
\DeclareMathOperator{\Res}{Res}
\DeclareMathOperator{\ord}{ord}
\newcommand*\pFqskip{8mu}
\newcommand*\pFq{\begingroup
        \catcode`\,\active
        \def ,{\mskip\pFqskip\relax}%
        \dopFq
}
\def\dopFq#1#2#3#4#5{%
        {}_{#1}F_{#2}\biggl[\genfrac..{0pt}{}{#3}{#4};#5\biggr]%
        \endgroup
}
\begin{document}
%padé, ref, c^\infiny partout
\title{Optimality and resonances in a class of compact finite difference schemes of high order}

\author{Joackim Bernier }

\maketitle

\begin{abstract}
In this paper, we revisit the old problem of compact finite difference approximations of the homogeneous Dirichlet problem in dimension $1$. 
We design a large and natural set of schemes of arbitrary high order, and we equip this set with an algebraic structure. 
We give some general criteria of convergence and we apply them to obtain two new results. On the one hand, we use Pad\'e approximant theory to construct, for each given order of consistency, the {\em most efficient} schemes and we prove their convergence. On the other hand, we use diophantine approximation theory to prove that {\em almost all} of these schemes are convergent at the same rate as the consistency order, up to some logarithmic correction.
\end{abstract}

\tableofcontents

\section{Introduction}
Many decades ago, compact finite differences methods were widely studied. Nowadays, we can find a huge literature about these methods that are widely applied and used for the approximation of partial differential equations (see, for example, \cite{MR0165702} or \cite{MR2124284}). In particular, we can find a lot of examples of accurate schemes for elliptic problems and many classical mathematical arguments are proposed to prove their convergence (monotonicity, energy, green functions, ...). However, it seems that there is not general and algebraic study of compact finite difference schemes for elliptic problems, equivalent to what we can find, for example, for the Runge Kutta methods applied to Cauchy problems (general stability criteria, algebraic order conditions using Hopf algebras and trees as we can see in \cite{MR1227985} or \cite{MR2840298}). 

\medskip

As the field of elliptic problems is clearly too wide, we propose, in this paper, a general study of a large and natural class of compact finite difference schemes of high order for the homogeneous Dirichlet problem in dimension $1$. In this context, a compact finite difference scheme is a linear system of the form
\[ \mathbf{D}_N \mathbf{u}^N = \mathbf{S}_N \mathbf{f}^{N,ex}, \]
where $\mathbf{f}^{N,ex}$ is a discretization of the source term on a grid of stepsize $h=(N+1)^{-1}$, $ \mathbf{D}_N$ and $ \mathbf{S}_N$ are matrices and $\mathbf{u}^{N}$ is the approximation of the solution of the Dirichlet problem.

\medskip

To study their convergence (i.e. the approximation of the exact solution by $\mathbf{u}^N$), we first introduce some specific and rigorous notions of consistency and stability taking into account the boundary conditions. Then,  we describe precisely the class of schemes that we consider, namely when the matrix  $\mathbf{D}_N$ is a polynomial in the usual discrete second derivative matrix $\mathbf{A}_N$ defined by 
\begin{equation}
\label{definition_du_prince}
\mathbf{A}_N = \left(  \begin{matrix} 2 & -1 \\
													-1 & 2  & -1 \\
													   & \ddots & \ddots & \ddots \\
													   & & -1 & 2 & -1 \\
													   & & & -1 & 2
\end{matrix} \right) \in \mathscr{L}(\mathbb{C}^N). 
\end{equation}
This choice is made for two reasons. First it allows for a relatively simple stability analysis, and second it is in fact not so restrictive. Indeed, if we take a symmetric finite difference formula $d=(d_j)_{j\in \mathbb{Z}}$ that approximates the second derivative, i.e.\ for all smooth function $u$, 
\[ \sum_{j \in \mathbb{Z}} d_{j} u(hj) \simeq -h^2 u''(0), \]
then we get, from a convolution formula and for some specific and natural choice of the coefficients near the boundary, a matrix $\mathbf{D}_N$ that is a polynomial in $\mathbf{A}_N$. 
%Of course, there is a problem to define the matrix $\mathbf{D}_N$ near the boundary with a convolution. We solve this problem quite naturally extending on the boundary our approximation of the solution of the Dirichlet problem as an odd function.

\medskip

In this paper, we give some general criterion of convergence for this family of schemes. Moreover, we address the following two questions: \begin{itemize}
\item Are these schemes stable {\em in general}\, ?
\item Amongst these schemes, what are the most {\em efficient} and are they stable? 
\end{itemize}
We will precise the two ambiguous terms {\em general} and {\em efficient} by introducing, on one hand, a Lebesgue measure on the set of schemes, and on the other hand, an optimization problem defining efficiency. The first main result of this paper will be to prove that almost all schemes are convergent at the same rate as its consistency order, up to some logarithmic correction. It is based on a careful analysis of small denominators appearing in the stability conditions, linked with diophantine approximation theory. The second main result of this paper is the design and construction of the most efficient schemes in the class considered, which turn to be stable, this latter property requiring the use of Pad\'e approximant theory to be proved. 

\section{Formalism and main results}
The goal of this section is to present the two main results of this paper. To this aim, we first define rigorously compact finite difference schemes for the homogeneous Dirichlet problem in dimension $1$. Then, we recall the usual concept of convergence, consistency and stability for these schemes. And finally, we define the particular set of schemes that we consider. 
\subsection{Context}
%We want to study in detail the behaviour of a class of finite difference schemes. As a consequence, we focus on one of the most elementary elliptic problem: 

We consider the homogeneous Dirichlet problem in dimension $1$, namely: 
\medskip

For a given $f:\mathbb{R}\to \mathbb{C}$,  find $u:[0,1]\to \mathbb{C}$ such that
\begin{equation}
	\label{Dirichet_homogene}
	\left\{ \begin{array}{lll}
 		-u''(x)=f(x), \ \forall x\in]0,1[, \\
 		u(0)=u(1)=0.
	\end{array}\right.
\end{equation} 

\medskip

To design finite difference schemes, we will consider regular grids on $\mathbb{R}$. More precisely, we choose $N\in \mathbb{N}^*$ to be the number of grid points into $]0,1[$ (the number of unknowns) and we define $h$ as the stepsize of the grid. As a consequence, $h$ and $N$ are linked by the relation
\[ h=\frac1{N+1} .\]
Let $x_j^N = jh$, $j \in \mathbb{Z}$ denote the grid points, see Figure \ref{regualr_grid}.
\begin{figure}[h!]

\setlength{\unitlength}{1cm}
\begin{picture}(15,2)
%  traits entre les boîtes
   \put(0,0.5){\line(1,0){7.5}}
   \put(9.5,0.5){\line(1,0){5.5}}
   
   \put(1,0.25){\line(0,1){0.5}}
   \put(3,0){\line(0,1){1}}
   \put(5,0.25){\line(0,1){0.5}}
   \put(7,0.25){\line(0,1){0.5}}

    \put(10,0.25){\line(0,1){0.5}}
	\put(12,0){\line(0,1){1}}
	\put(14,0.25){\line(0,1){0.5}}
	
   \put(8.4,0.5){$\dots$}

	\put(0.3,1.2){$x_{-1}^N=-h$}
	\put(2.5,1.2){$x_{0}^N=0$}
	\put(4.5,1.2){$x_{1}^N=h$}
	\put(6.4,1.2){$x_{2}^N=2h$}
	
	\put(9.4,1.2){$x_{N}^N=Nh$}
	\put(11.3,1.2){$x_{N+1}^N=1$}
	\put(13.1,1.2){$x_{N+2}^N=1+h$}
	
\end{picture}
\caption{\label{regualr_grid} Regular grid with $N$ points into $]0,1[$.}
\end{figure}
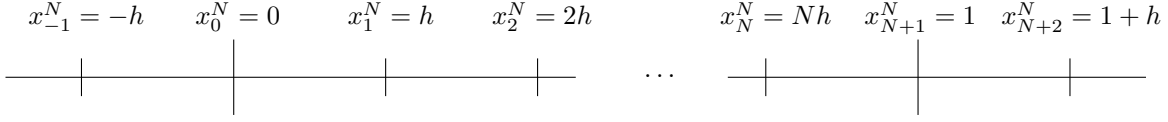

\medskip

In this context, a finite difference scheme is a couple of sequences of matrices $(( \mathbf{D}_N)_{N\in \mathbb{N}^*},( \mathbf{S}_N)_{N \in \mathbb{N}^*})$ such that $ \mathbf{D}_N\in  \mathscr{L}(\mathbb{C}^N)$ is a square matrix of size $N$ and $ \mathbf{S}_N \in \mathscr{L}(\mathbb{C}^\mathbb{Z};\mathbb{C}^N)$ is a rectangle matrix with $N$ rows and a finite number of columns.

\medskip

If $ \mathbf{D}_N$ is invertible for all $N$, such a scheme leads to an approximation of the solution $u$ of the Dirichlet problem \eqref{Dirichet_homogene}. More precisely, we define $\mathbf{f}^{N,ex}$ and $\mathbf{u}^{N,ex}$ as the vectors of the values of $f$ and $u$ on the grid:
\begin{equation}
\label{discretization}
\mathbf{f}^{N,ex}=(f(x_j^N))_{j\in \mathbb{Z}}\in \mathbb{C}^{\mathbb{Z}} \textrm{ and }  \mathbf{u}^{N,ex}=(u(x_j^N))_{j\in \llbracket 1, N \rrbracket }\in \mathbb{C}^N.
\end{equation}
Then, the scheme gives an approximation $\mathbf{u}^{N}$ of $\mathbf{u}^{N,ex}$ through the solution of the linear system
\begin{equation}
\label{linear system}
 \mathbf{D}_N \mathbf{u}^{N} = h^2  \mathbf{S}_N  \mathbf{f}^{N,ex}. 
\end{equation}

\medskip

It may seem unusual to use the values of $f$ outside $[0,1]$ but it is just a way to have more symmetric formulas. In practice, as we will explain in the subsection \ref{Form of the schemes}, we only use a finite number of values of $f$ outside $[0,1]$ independent of $N$ and at a distance of order $h$ of $[0,1]$. Consequently, as we will assume that $f$ is a regular function, these values could be extrapolated from those of $f$ in $[0,1]$ with some Newton series.

\medskip

To estimate the accuracy of a scheme, we define a notion of rate of convergence and of order of convergence. Let $(\epsilon_N)_{N \in \mathbb{N}^*}$ be a sequence of positive numbers that tends to $0$, as $N$ goes to infinity. Then, a scheme $(( \mathbf{D}_N)_{N\in \mathbb{N}^*},( \mathbf{S}_N)_{N \in \mathbb{N}^*})$  is said to be convergent at the rate $(\epsilon_N)_{N \in \mathbb{N}^*}$, if $\mathbf{D}_N$ is invertible for all $N\in \mathbb{N}^*$ and if for all $f\in C^\infty(\mathbb{R})$ there exists a constant $c>0$ such that for all $N\in \mathbb{N}^*$,
\begin{equation}
\label{convergence estimation}
 \sup_{j=1,\dots, N} |\mathbf{u}^{N}_j - \mathbf{u}^{N,ex}_j | =: \| \mathbf{u}^{N,ex}  - \mathbf{u}^{N} \|_{\infty} \leq c \epsilon_N. 
\end{equation}
Furthermore, if $n$ is a positive integer and $\epsilon_N=h^n$, then a scheme that is convergent at the rate $(\epsilon_N)_{N \in \mathbb{N}^*}$ is said to be convergent of order $n$.

\subsection{Notions of consistency and stability}
In order to establish a convergence result of the form \eqref{convergence estimation}, we use introduce the notions of consistency and stability. Then, we give a Lax theorem to deduce the convergence from the consistency and the stability.

\medskip

A scheme $(( \mathbf{D}_N),( \mathbf{S}_N))$ is said to be consistent of order $n\in \mathbb{N}$, if for all $f\in C^{\infty}(\mathbb{R})$ there exists a constant $c>0$ such that for all $N\in \mathbb{N}^*$, the vectors $\mathbf{u}^{N,ex} $ and $\mathbf{f}^{N,ex}$, defined by \eqref{discretization}, verify
\begin{equation}
\label{def consistency}
 \|\mathbf{D}_N \mathbf{u}^{N,ex} - h^2 \mathbf{S}_N \mathbf{f}^{N,ex} \|_{\infty} \leq c h^{n+2}.
\end{equation}

\medskip

In the context of the Dirichlet problem, it is usual to relax this notion of consistency near the boundary (see \cite{MR0165702}). A scheme $(( \mathbf{D}_N),( \mathbf{S}_N))$ is said to be consistent of order $n\in \mathbb{N}$ in the center and of order $n-2$ at a distance $l \in \mathbb{N}$ of the boundary, if for all $f\in C^{\infty}(\mathbb{R})$ there exists a constant $c>0$ such that for all $N\in \mathbb{N}^*$, the vectors $\mathbf{u}^{N,ex} $ and $\mathbf{f}^{N,ex}$, defined by \eqref{discretization}, verify for all $j=1,\dots, N$,
\begin{equation}
\label{def extend consistency}
 \left| \left( \mathbf{D}_N \mathbf{u}^{N,ex}\right)_j - h^2 \left( \mathbf{S}_N \mathbf{f}^{N,ex} \right)_j  \right| \leq \left\{  \begin{array}{lll} c h^{n+2} & \textrm{ if } l<j<N+1-l ,\\
																																																	c h^{n}  & \textrm{ else.}
\end{array} \right.  
\end{equation}

\medskip

In this article, it is useful to distinguish some notions of stability. A scheme $(( \mathbf{D}_N),( \mathbf{S}_N))$ or a sequence $\displaystyle ( \mathbf{D}_N)_{N\in \mathbb{N}^*}\in \prod_{N\in \mathbb{N}^*} \mathscr{L}(\mathbb{C}^N)$ of matrices is said to be
\begin{itemize}
\item stable, if there exists a positive constant $c>0$ such that for all $N\in \mathbb{N}^*$, we have
\begin{equation}
\label{def stability}
\forall \mathbf{v}\in \mathbb{C}^N, \ c \| \mathbf{v} \|_{\infty} \leq h^{-2} \| \mathbf{D}_N  \mathbf{v} \|_{\infty}.
\end{equation}
\item strongly stable,  if for all $l \in \mathbb{N}$, there exists a positive constant $c>0$ such that for all $N\in \mathbb{N}^*$,
\begin{equation}
\label{def strong stability}
\forall \mathbf{v}\in \mathbb{C}^N, \ c \| \mathbf{v} \|_{\infty} \leq \sup_{j=1,\dots,N}   \left\{  \begin{array}{lll}  h^{-2}\left( \mathbf{D}_N  \mathbf{v} \right)_j & \textrm{ if } l<j<N+1-l ,\\
																																																	 \left( \mathbf{D}_N  \mathbf{v} \right)_j  & \textrm{ else.}
\end{array} \right..
\end{equation}
\item stable relatively to a sequence $(\eta_N)_{N\in \mathbb{N}^*}$ of positive numbers, if there exists a positive constant $c>0$ such that for all $N\in \mathbb{N}^*$, we have
\begin{equation}
\label{def relative stability}
\forall \mathbf{v}\in \mathbb{C}^N, \ c \| \mathbf{v} \|_{\infty} \leq \eta_N \| \mathbf{D}_N  \mathbf{v} \|_{\infty}.
\end{equation}
\end{itemize}
We remark that, if a scheme is strongly stable, then it is stable, and, if it is stable, then it is stable relatively to $\eta_N=(N+1)^2=h^{-2}$.

\medskip

To establish  convergence from  consistency and  stability, we give a Lax theorem.
\begin{theo} Lax
\label{Lax}
\begin{itemize}
\item A scheme that is strongly stable (see \eqref{def strong stability}) and consistent of order $n\geq 1$ in the center and of order $n-2$ at a distance $l \in \mathbb{N}$ of the boundary (see \eqref{def extend consistency}) is convergent of order $n$.
\item Let $(\eta_N)_{N\in \mathbb{N}^*}$ be a sequence of positive number and $n\in \mathbb{N}^*$ such that the sequence $(\eta_N h^{n+2})_{N\in \mathbb{N}^*}$ tends to zero as $N$ goes to infinity. Then a scheme that is stable relatively to the sequence $(\eta_N)_{N\in \mathbb{N}^*}$ \eqref{def relative stability} and consistent of order $n$ \eqref{def consistency} is convergent at the rate $\epsilon_N=\eta_N h^{n+2}$ \eqref{convergence estimation}.
\end{itemize}
\end{theo}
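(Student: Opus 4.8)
The plan is to treat both statements through a single device, the \emph{error equation}. Writing $\mathbf{e}^N := \mathbf{u}^{N,ex} - \mathbf{u}^N$ for the error vector, I would first observe that $\mathbf{u}^N$ solves \eqref{linear system} while $\mathbf{u}^{N,ex}$ is the exact solution sampled on the grid, so that applying $\mathbf{D}_N$ to the error and subtracting the defining system produces
\begin{equation*}
\mathbf{D}_N \mathbf{e}^N = \mathbf{D}_N \mathbf{u}^{N,ex} - h^2 \mathbf{S}_N \mathbf{f}^{N,ex}.
\end{equation*}
The right-hand side is exactly the quantity controlled by the consistency hypotheses \eqref{def consistency} and \eqref{def extend consistency}, so the whole proof reduces to feeding this identity into the appropriate stability inequality. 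Before doing so I would record that stability, in any of the three senses, forces $\mathbf{D}_N$ to be injective (if $\mathbf{D}_N \mathbf{v}=0$ the left-hand side of the relevant inequality vanishes, hence $\mathbf{v}=0$), and therefore invertible since it is a square matrix; this both legitimises the definition of $\mathbf{u}^N$ and is required by the very notion of convergence.

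For the second bullet I would argue directly. Fixing $f \in C^\infty(\mathbb{R})$, consistency of order $n$ provides a constant $c>0$ with $\|\mathbf{D}_N \mathbf{e}^N\|_\infty \le c\, h^{n+2}$ for all $N$, by the error equation and \eqref{def consistency}. Applying relative stability \eqref{def relative stability} to $\mathbf{v} = \mathbf{e}^N$ yields a constant $c'>0$ such that $c' \|\mathbf{e}^N\|_\infty \le \eta_N \|\mathbf{D}_N \mathbf{e}^N\|_\infty \le c\, \eta_N h^{n+2}$, that is $\|\mathbf{e}^N\|_\infty \le (c/c')\, \epsilon_N$ with $\epsilon_N = \eta_N h^{n+2}$. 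Since $\epsilon_N \to 0$ by hypothesis, this is precisely convergence at the rate $(\epsilon_N)_{N\in\mathbb{N}^*}$ in the sense of \eqref{convergence estimation}.

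For the first bullet the only new point is to match the boundary weighting of strong stability against the split consistency orders. Using the error equation together with \eqref{def extend consistency}, I would bound $|(\mathbf{D}_N \mathbf{e}^N)_j|$ by $c\,h^{n+2}$ for interior indices $l < j < N+1-l$ and by $c\,h^n$ for indices near the boundary. Plugging $\mathbf{v} = \mathbf{e}^N$ into \eqref{def strong stability}, the interior contribution carries the factor $h^{-2}$ and becomes $h^{-2}\cdot c\,h^{n+2} = c\,h^n$, while the boundary contribution is already $c\,h^n$; hence the supremum on the right-hand side is $\le c\,h^n$, and strong stability gives $c'\|\mathbf{e}^N\|_\infty \le c\,h^n$, i.e. convergence of order $n$. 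If the right-hand side of \eqref{def strong stability} is read with signed entries rather than moduli, I would simply apply the inequality to both $\mathbf{e}^N$ and $-\mathbf{e}^N$ to recover the absolute values.

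The argument is essentially routine once the error equation is isolated; the one place requiring care, and which I expect to be the only real obstacle, is the bookkeeping of the two consistency orders in the first bullet. The whole point of relaxing consistency to order $n-2$ at distance $l$ of the boundary is that the $h^{-2}$ weight is dropped there in the definition of strong stability, so that the two effects compensate and the same global order $h^n$ is recovered. Checking that the index ranges in \eqref{def extend consistency} and \eqref{def strong stability} coincide, and that the per-index estimate is uniform in $N$, is the main thing to verify.
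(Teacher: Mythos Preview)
Your proposal is correct and follows exactly the same approach as the paper: derive the error equation $\mathbf{D}_N(\mathbf{u}^{N,ex}-\mathbf{u}^N)=\mathbf{D}_N\mathbf{u}^{N,ex}-h^2\mathbf{S}_N\mathbf{f}^{N,ex}$, observe that stability forces invertibility of $\mathbf{D}_N$, and apply the stability inequality to this error vector. The paper's proof is in fact considerably terser than yours---it does not spell out the boundary bookkeeping for the first bullet---so your more detailed treatment of how the $h^{-2}$ weight in strong stability compensates the reduced order $n-2$ near the boundary is a welcome elaboration rather than a deviation.
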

\begin{proof}
The invertibility of $\mathbf{D}_N$ follows from the stability estimate. To prove the convergence estimate, it is enough to apply the stability estimate to the error of consistency
\[\mathbf{D}_N \mathbf{v}=\mathbf{D}_N \left( \mathbf{u}^{N,ex} -\mathbf{u}^{N} \right) = \mathbf{D}_N \mathbf{u}^{N,ex} - h^2 \mathbf{S}_N \mathbf{f}^{N,ex} .\]
\end{proof}

\subsection{Expression of the schemes}
\label{Form of the schemes}
Usually, to design a finite difference scheme $(( \mathbf{D}_N),( \mathbf{S}_N))$, we need to introduce the notion of finite difference formulas. A finite difference formula is a sequence of complex numbers indexed by $\mathbb{Z}$ with  finite support. We denote by $\mathbb{C}^{(\mathbb{Z})}$ their space. We say that a couple of finite difference formulas $(d,s) \in \left( \mathbb{C}^{(\mathbb{Z})} \right)^2$ is consistent of order $n$, if
\begin{equation}
\label{def consistency formula}
\forall u \in C^{\infty}(\mathbb{R}), \ \sum_{j\in \mathbb{Z}} d_j u(x^N_j) + h^2 s_j u''(x^N_j) =  \mathcal{O}(h^{n+2}).
\end{equation}
For example, if we introduce the usual formula for the second derivative
\begin{equation}
\label{definition_formule_du_prince}
a = 2\mathbb{1}_{\{0\}}-\mathbb{1}_{\{-1,1\}},
\end{equation}
then a Taylor expansion shows that $(a,\mathbb{1}_{\{0 \}})$ is consistent of order $2$.

\medskip

To preserve the classical properties of the second derivative, it is natural to assume that the sequences $d$ and $s$ are symmetric,\begin{equation}
\label{def symetric complex}
d,s\in \mathscr{S}_{\mathbb{C}}:=\{ b \in  \mathbb{C}^{(\mathbb{Z})} \ | \ \forall j\in \mathbb{Z}, \ b_j=b_{-j}\}, 
\end{equation}
and it is then natural to restrict the analysis to the case where $n$ is an even number. 
Sometimes, it is interesting and more effective --for instance using formal calculus-- to consider finite difference formulas with coefficients in a smaller ring than $\mathbb{C}$. For example, the usual high order formulas have rational or integer coefficients. That is why, we introduce, the more general notation
\begin{equation}
\label{def symetric corps}
\mathscr{S}_{R}:=\{ b \in R^{(\mathbb{Z})} \ | \ \forall j\in \mathbb{Z}, \ b_j=b_{-j}\} \textrm{ with } R \textrm{ a ring such that } \mathbb{Z}\subset R \subset \mathbb{C}.
\end{equation}
It is useful to associate to each finite difference formula the highest index associated to a non zero value. It is a measure of the stencil of a formula. More formally, if $b\in \mathscr{S}_{\mathbb{C}}$ is a symmetric formula
then $\tau(b)$ is defined by
\begin{equation}
\label{def size stencil}
\tau(b)= \max \{ j \in \mathbb{Z} \ | \ b_j\neq 0 \}.
\end{equation}
The following proposition explains that there is a simple way to get finite difference formulas $d,s\in \mathscr{S}_{\mathbb{C}}$ consistent of order $n$. 
\begin{propo} Let $n\in 2\mathbb{N}$ be an even integer and $d\in \mathscr{S}_{\mathbb{C}}$ be a symmetric formula with zero mean 
\label{prop si on a d alors on a s}
\begin{equation}
\label{zero mean}
\sum_{j\in \mathbb{Z}} d_j=0.
\end{equation}
Then there exists a unique $s\in \mathscr{S}_{\mathbb{C}}$ such that $(d,s)$ is consistent of order $n$ \eqref{def consistency formula} and $\tau(s)\leq \frac{n}2-1.$
Furthermore, $(\frac{s_0}2, s_1,\dots,s_{\frac{n}2-1}) $ is the solution of the Vandermonde linear system
\begin{equation}
\label{Vandermonde}
 ( \frac{s_0}2, s_1,\dots,s_{\frac{n}2-1})((i-1)^{2j-2})_{1\leq i,j \leq \frac{n}2} = -  \sum_{j >0 } d_j \left( \frac{j^2}2,\dots,\frac{j^n}{n(n-1)} \right)   .
\end{equation}
\end{propo}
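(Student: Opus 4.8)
My plan is to turn the asymptotic consistency requirement into a finite set of linear equations on the coefficients of $s$ by a Taylor expansion, and then to recognize these equations as exactly the announced Vandermonde system; invertibility of a Vandermonde matrix will then deliver both existence and uniqueness at once.

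First I would introduce, for a symmetric formula $b\in\mathscr{S}_{\mathbb{C}}$ and $k\in\mathbb{N}$, the (finite) moment $\mu_k(b):=\sum_{j\in\mathbb{Z}}b_j j^k$, observing that $\mu_k(b)=0$ whenever $k$ is odd, since $b_j=b_{-j}$. As $u\in C^\infty(\mathbb{R})$ and both $d$ and $s$ have finite support, I would Taylor expand $u(jh)$ and $u''(jh)$ at $0$ and collect powers of $h$; after reindexing the contribution of $s$, the coefficient of $\frac{h^m}{m!}u^{(m)}(0)$ in $\sum_j d_j u(x_j^N)+h^2 s_j u''(x_j^N)$ is $\mu_m(d)+m(m-1)\mu_{m-2}(s)$ (with the convention $\mu_{-2}=\mu_{-1}=0$). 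Testing against the monomials $u(x)=x^m$ shows that $(d,s)$ is consistent of order $n$ if and only if these coefficients vanish for every $m\leq n+1$.

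The odd-$m$ equations hold automatically by symmetry, and the case $m=0$ is precisely the zero-mean hypothesis $\mu_0(d)=\sum_j d_j=0$. What remains are the $\frac{n}2$ even equations, $m=2p$ with $1\leq p\leq\frac n2$, which I would rewrite as $\mu_{2p-2}(s)=-\frac{1}{2p(2p-1)}\mu_{2p}(d)$. Since $\tau(s)\leq\frac n2-1$, the formula $s$ is entirely encoded by the row vector $(\frac{s_0}2,s_1,\dots,s_{\frac n2-1})$; expanding $\frac12\mu_{2p-2}(s)$ with the convention $0^0=1$ and using $\frac12\mu_{2p}(d)=\sum_{j>0}d_j j^{2p}$, I would verify that the $p$-th equation is exactly the $p$-th coordinate of the matrix identity in the statement, with $M=((i-1)^{2j-2})_{1\leq i,j\leq n/2}$. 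This bookkeeping, namely tracking the factor $\frac12$ on $s_0$ and the weights $\frac{1}{2p(2p-1)}$, is the one genuinely fiddly step, though it is entirely mechanical.

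It then remains to invert the system. Setting $y_i=(i-1)^2$, the matrix reads $M_{ij}=y_i^{\,j-1}$, a Vandermonde matrix with determinant $\prod_{i<i'}(y_{i'}-y_i)$. The nodes $0^2,1^2,\dots,(\frac n2-1)^2$ are pairwise distinct, so $M$ is invertible and the system has a unique solution $(\frac{s_0}2,s_1,\dots,s_{\frac n2-1})$, hence a unique $s\in\mathscr{S}_{\mathbb{C}}$ with $\tau(s)\leq\frac n2-1$ making $(d,s)$ consistent of order $n$. I do not anticipate any serious obstacle here: the only care required is the index and normalization matching of the third step, after which invertibility is immediate.
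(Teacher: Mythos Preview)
Your proposal is correct and follows essentially the same approach as the paper: both reduce consistency to the vanishing of the even-moment conditions $\mu_{2p}(d)+2p(2p-1)\mu_{2p-2}(s)=0$ for $1\le p\le n/2$ (the paper phrases this as testing on $u=x^{2j}$), use the symmetry of $d,s$ to discard the odd cases, and identify the resulting linear system with the stated Vandermonde equations. Your write-up is slightly more explicit than the paper's in that you spell out why the Vandermonde matrix with nodes $0^2,1^2,\dots,(\tfrac n2-1)^2$ is invertible, which the paper leaves implicit.
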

\begin{proof}
If $1\leq j\leq \frac{n}2$ is an integer and if we choose $u=x^{2j}$ in \eqref{def consistency formula} then it comes 
\[  \sum_{i\in \mathbb{Z}} d_i (hi)^{2j} + s_j 2j(2j-1) h^{2j} i^{2(j-1)}   =  \mathcal{O}(h^{n+2}). \]
As $j\leq \frac{n}2$ and $h$ tends to $0$, we deduce that the remainder vanishes and we recognize the Vandermonde equation \eqref{Vandermonde}.\\
Conversely, since $d$ and $s$ are symmetric, if $u$ is an odd function then
\[ \sum_{i\in \mathbb{Z}} d_i u(x^N_i) + h^2 s_i u''(x^N_i) =0. \]
Furthermore, since $s$ is the solution of \eqref{Vandermonde}, this relation also holds if $u=x^{2j}$ with $1\leq j\leq \frac{n}2$. As a consequence, it is enough to apply a Taylor Young expansion to prove \eqref{def consistency formula}.
\end{proof}

\medskip

Then, to design the matrix $\mathbf{D}_N$ and $\mathbf{S}_N$ from the formulas $d$ and $s$, a natural choice would be the following: 
\begin{equation}
\label{def a l arrache}
\left(\mathbf{D}_N\mathbf{u}\right)_i=\sum_{j\in \mathbb{Z}} d_{i-j} \mathbf{u}_{j} \textrm{ and }  \left(\mathbf{S}_N\mathbf{f}\right)_i=\sum_{j\in \mathbb{Z}} s_{i-j} \mathbf{f}_{j} .
\end{equation}
However, $\mathbf{D}_N$ has to be square matrix. And, with such a definition, we use the values of $\mathbf{u}$ at the indexes $1-\tau(d),\dots,0$ and $N+1,\dots,N+\tau(d)$.
The usual way to solve this problem is to modify the formulas near the boundary (for $i\leq\tau(d)$ or $i\geq N+1-\tau(d)$).  That is why, we introduce, for $i=1,\dots,\tau(d)$, some formulas $d^i\in \mathbb{C}^{(\mathbb{Z})}$ and $s^i\in \mathbb{C}^{(\mathbb{Z})}$ that satisfy a relation of consistency at a distance $i$ of the boundary
\begin{equation}
\label{def consistency formula boundary}
\forall u \in C^{\infty}(\mathbb{R}), \ u(0)=0 \ \Rightarrow \ \sum_{j > -i} d_j^i u(x^N_{j+i}) + h^2 \sum_{j\in \mathbb{Z}} s_j^i u''(x^N_{j+i}) =  \mathcal{O}(h^{\mu+2}),
\end{equation}
here $\mu\in \{n-2,n\}$ is the desired order of consistency. We use symmetrically in $1$ these formulas to define, if $N$ is large enough, the following scheme 
$(( \mathbf{D}_N),( \mathbf{S}_N))$,  for $\mathbf{u}\in \mathbb{C}^N$ and $\mathbf{f}\in \mathbb{C}^\mathbb{Z}$, by
\begin{equation}
\label{def abstraite matrices D}
 \left(\mathbf{D}_N\mathbf{u}\right)_i :=\left\{ \begin{array}{llll} \displaystyle \sum_{j>0} d_{j-i}^i \mathbf{u}_{j} & \textrm{ if } 1\leq i\leq \tau(d),  \\
 																						\displaystyle \sum_{j\in \mathbb{Z}} d_{j-i} \mathbf{u}_{j} & \textrm{ if } \tau(d)< i < N+1-\tau(d),  \\
 																						\displaystyle \sum_{j<N+1} d_{-j+i}^{N+1-i} \mathbf{u}_{j} & \textrm{ if } N+1-\tau(d) \leq i \leq N+1.
\end{array}  \right. 
\end{equation}
and
\begin{equation}
\label{def abstraite matrices S}
 \left(\mathbf{S}_N\mathbf{f}\right)_i :=\left\{ \begin{array}{llll} \displaystyle \sum_{j\in \mathbb{Z}} s_{j-i}^i \mathbf{f}_{j} & \textrm{ if } 1\leq i\leq \tau(d),  \\
 																						\displaystyle \sum_{j\in \mathbb{Z}} s_{j-i} \mathbf{f}_{j} & \textrm{ if }  \tau(d)< i < N+1-\tau(d),  \\
 																						\displaystyle \sum_{j\in \mathbb{Z}} s_{-j+i}^{N+1-i} \mathbf{f}_{j} & \textrm{ if } N+1-\tau(d) \leq i \leq N+1.  \\
\end{array}  \right. 
\end{equation}
The following proposition enables to get the consistency of such a construction.
\begin{propo} 
\label{prop the schemes are consistent}
For $N$ large enough, let $(( \mathbf{D}_N),( \mathbf{S}_N))$ be the scheme (defined by  \eqref{def abstraite matrices D} and \eqref{def abstraite matrices S}), then
\begin{itemize}
\item if $\mu=n-2$, this scheme is consistent of order $n-2$ at a distance $\tau(d)$ of the boundary and of order $n$ in the center, see \eqref{def extend consistency}.
\item if $\mu=n$, this scheme is consistent of order $n$, see \eqref{def consistency}.
\end{itemize}
\end{propo}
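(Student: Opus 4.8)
The plan is to check the coordinate bounds \eqref{def consistency} and \eqref{def extend consistency} directly, by matching each coordinate of the consistency error to one of the elementary relations \eqref{def consistency formula} or \eqref{def consistency formula boundary}. The preliminary and in fact crucial step is to make $f$ and $u$ compatible at all grid points, including those outside $[0,1]$ reached by the boundary rows: instead of the solution of \eqref{Dirichet_homogene} on $[0,1]$, I would work with the unique $u\in C^\infty(\mathbb{R})$ obtained by integrating $-u''=f$ twice on $\mathbb{R}$ and fixing the two constants of integration by $u(0)=u(1)=0$. This $u$ agrees with the Dirichlet solution on $[0,1]$, hence leaves $\mathbf{u}^{N,ex}$ unchanged, but now $f(x^N_j)=-u''(x^N_j)$ for every $j\in\mathbb{Z}$ and $u$ vanishes at both endpoints $0$ and $1$. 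Setting $E_i:=(\mathbf{D}_N\mathbf{u}^{N,ex})_i-h^2(\mathbf{S}_N\mathbf{f}^{N,ex})_i$, I would then bound $|E_i|$ in the three index ranges of \eqref{def abstraite matrices D}--\eqref{def abstraite matrices S}.

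For the central indices $\tau(d)<i<N+1-\tau(d)$, the corresponding row of $\mathbf{D}_N$ only involves the interior values $\mathbf{u}^{N,ex}_j=u(x^N_j)$ with $1\le j\le N$; substituting $f=-u''$ and reindexing by $k=j-i$ rewrites $E_i$ as $\sum_k d_k\,u(x^N_i+hk)+h^2\sum_k s_k\,u''(x^N_i+hk)$, which is the left-hand side of \eqref{def consistency formula} applied to the shifted function $u(\cdot+x^N_i)$. Thus $E_i=\mathcal{O}(h^{n+2})$. To obtain a constant uniform in $i$ and $N$ I would expand by Taylor--Lagrange: the annihilation of the low-order moments built into \eqref{def consistency formula} leaves a remainder of size $C\,h^{n+2}|u^{(n+2)}(\xi_i)|$ with $C$ depending only on $d$ and $s$, so $|E_i|\le c\,h^{n+2}$ with $c$ controlled by $\sup_{[0,1]}|u^{(n+2)}|$.

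For the boundary indices I would proceed symmetrically. When $1\le i\le\tau(d)$, the same manipulation identifies $E_i$ with the left-hand side of \eqref{def consistency formula boundary} at distance $i$ applied to $u$; since $u(0)=0$ the hypothesis of \eqref{def consistency formula boundary} is satisfied, giving $E_i=\mathcal{O}(h^{\mu+2})$. When $N+1-\tau(d)\le i\le N$ I would use the reflection $\check u(x):=u(1-x)$, which satisfies $\check u(0)=u(1)=0$ and $-\check u''(x)=f(1-x)$. The reversed coefficients $d^{N+1-i}_{-j+i}$, $s^{N+1-i}_{-j+i}$ and the superscript $N+1-i$ in \eqref{def abstraite matrices D}--\eqref{def abstraite matrices S} are tailored so that, after the change of index $j'=N+1-j$ (using $(N+1)h=1$), $E_i$ becomes exactly the left-hand side of \eqref{def consistency formula boundary} at distance $m=N+1-i$ applied to $\check u$; hence again $E_i=\mathcal{O}(h^{\mu+2})$. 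As there are only finitely many boundary formulas, one for each $i\in\{1,\dots,\tau(d)\}$, the maximum of their implied constants is finite and keeps the bound uniform in $N$.

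It then remains to collect the estimates. If $\mu=n$, all three ranges yield $|E_i|\le c\,h^{n+2}$, which is precisely \eqref{def consistency}, so the scheme is consistent of order $n$. If $\mu=n-2$, the central range still gives $c\,h^{n+2}$ while the two boundary ranges give $c\,h^{\mu+2}=c\,h^{n}$, which is exactly \eqref{def extend consistency} with $l=\tau(d)$. I expect the real work to lie not in any individual estimate but in the bookkeeping: legitimizing $f=-u''$ at the grid points outside $[0,1]$ (handled by the global extension of $u$), getting the reflected right-boundary formula to line up with \eqref{def consistency formula boundary}, and verifying that every implied constant can be chosen independently of $N$.
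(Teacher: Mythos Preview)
Your proposal is correct and follows essentially the same approach as the paper: identify each coordinate of the consistency error with the left-hand side of \eqref{def consistency formula} or \eqref{def consistency formula boundary} applied to a suitable shift or reflection of $u$, then control the remainder by a Taylor--Lagrange expansion. You are in fact more explicit than the paper on two points it leaves implicit---the global $C^\infty$ extension of $u$ so that $-u''=f$ holds at all grid points, and the reflection $\check u(x)=u(1-x)$ reducing the right boundary to the left---but the underlying argument is identical.
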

\begin{proof}
see Appendix \ref{proof the schemes are consistent}.
\end{proof}
The main difficulty with such a construction is to get stability. There are at least two general ways for choosing the formulas near the boundary to ensure stability. A first principle is to rely on  monotonicity arguments, as explained by  Bramble and Hubbart  \cite{MR0165702} and Price \cite{MR0232550}. The methods they consider to design the coefficients near the boundary are robust and lead, in general, to strong stability. However, the choice of formulas $d$ and $d^i$ is quite limited, as the conditions to ensure monotonicity are in general difficult to fulfil. Furthermore, it turns out that there exist very accurate high order schemes that do not satisfy any hypothesis of monotonicity. 

A second natural way of obtaining the boundary coefficients is to start from {\em polynomial methods} that we consider below. For these methods, if we respect some algebraic structures, we can compute explicitly the eigenvalues and the eigenvectors of $\mathbf{D}_N$, and analyse directly the stability. This method is not very restrictive for the choice of the formulas $d$ and there is a natural choice for the formulas $d^i$ near the boundary.

\medskip
The polynomial methods consists in studying schemes for which there exists a polynomial $P$ such that, for all $N\in \mathbb{N}$, $\mathbf{D}_N = P(\mathbf{A}_N)$ is a polynomial of $\mathbf{A}_N$ (the classical approximation of the second derivative, defined in \eqref{definition_du_prince}). The interest of this method is that the spectral decomposition of these matrices is well known. Indeed, we can verify by a straightforward calculation that
\begin{equation}
	\label{spectral_decompostion}
	\mathbf{A}_N \mathbf{e}^N_k = 4\sin^2 \left( \frac{\pi}2 k h \right) \mathbf{e}^N_k,  \ \textrm{ with } \mathbf{e}^N_k := (\sin(\pi k h j))_{j=1,\dots,N},
\end{equation}
and deduce classically that
\begin{equation}
\label{scarlatti}
 \mathbf{D}_N \mathbf{e}^N_k = P\left( 4\sin^2 \left( \frac{\pi}2 k h \right) \right) \mathbf{e}^N_k.  
 \end{equation}

\medskip

Actually, it is not very restrictive to require for $\mathbf{D}_N$ to be a polynomial in $\mathbf{A}_N$. 
Indeed, for a given symmetric formulas $d$, there is a natural possible choice for the boundary formulas $d^i$, $i = 1,\ldots,\tau(d)$ such that the matrix $\mathbf{D}_N$ defined by \eqref{def abstraite matrices D} is a polynomial in  $\mathbf{A}_N$. This choice corresponds to extend all the vectors $\mathbf{u}\in \mathbb{C}^N$ in sequences defined on $\mathbb{Z}$ through the relations
\[ \forall j \in \mathbb{Z}, \ \mathbf{u}_j = - \mathbf{u}_{-j} \textrm{ and } \mathbf{u}_{N+1+j} =- \mathbf{u}_{N+1-j},\]
and use the natural convolution formula \eqref{def a l arrache}. 
In practice, when $N$ is large enough, this choice leads to 
\begin{equation}
\label{chopin}
 d^i_j = d_j - d_{2i+j},  \quad i = 1,\ldots,\tau(d), \quad j \in \mathbb{Z}
\end{equation}
In all this paper, we denote by $\mathbf{D}_N(d)$ the square matrix obtained from this construction (i.e.\ the matrix \eqref{def abstraite matrices D} and the boundary formulas \eqref{chopin}-- see Definition \ref{def formal D_N}  for a formal construction).

\medskip

%In fact, we prove, in the following proposition, that the construction of $\mathbf{D}_N(d)$ yields matrices that are polynomials in $\mathbf{A}_N$ and that it is not restrictive to get high order consistency.
The following proposition shows that the previous construction is relevant:  First, we prove that all the matrices $\mathbf{D}_N(d)$ are polynomials in $\mathbf{A}_N$, and second we can find formulas $s^i$, $i = 1,\ldots,\tau(d)$ satisfying \eqref{def consistency formula boundary} for any given order of consistency $\mu$.
\begin{propo}
\label{prop de D_N} $\empty$
\begin{itemize}
\item If $R$ is a ring such that $\mathbb{Z}\subset R \subset \mathbb{C}$ and if $d\in \mathscr{S}_R$ is a $R$ valued finite difference symmetric formula then there exists a polynomial $P\in R[X]$ such that
\[ \forall N\in \mathbb{N}^*, \ P(\mathbf{A}_N) = \mathbf{D}_N(d) \]
and
\[ \deg P = \tau(d) .\]
\item Let $n\in 2\mathbb{N}^*$ and $\mu=n$ or $\mu=n-2$. If there exists a finite difference formula $s\in \mathscr{S}_{\mathbb{C}}$ such that $(d,s)$ is consistent of order $\mu$ (see \eqref{def consistency formula}) then for all $i=1,\dots,\tau(d)$ there exists a unique symmetric formula $b^i\in \mathscr{S}_{\mathbb{C}}$ such that $\tau(b)\leq \frac{\mu}2-1$ and 
\[ s^i :=  s + (b_{i+j}^i)_{j\in \mathbb{Z}} \textrm{ is consistent of order } \mu \textrm{ at a distance } i \textrm{ of the boundary, see \eqref{def consistency formula boundary}}.\] 
Furthermore, $( \frac{b_0^i}2, b_1^i,\dots,b_{\frac{\mu}2-1}^i) $ is the solution of the Vandermonde linear system
\begin{equation}
\label{Vandermonde boundary}
 (\frac{b_0^i}2, b_1^i,\dots,b_{\frac{\mu}2-1}^i)((i-1)^{2j-2})_{1\leq i,j \leq \frac{\mu}2} = - \sum_{j > 0} d_{i+j} \left( \frac{j^2}2,\dots,\frac{j^\mu}{\mu(\mu-1)} \right)   .
\end{equation}
\end{itemize}
\end{propo}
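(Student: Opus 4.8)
The plan is to treat the two assertions separately, handling the first through the spectral decomposition \eqref{spectral_decompostion} and the second by mimicking the Vandermonde argument of Proposition~\ref{prop si on a d alors on a s}. For the first item, I would not work with the matrices directly but with their symbols on the sine basis. By construction, $\mathbf{D}_N(d)$ acts on a vector by convolving $d$ against its doubly anti-symmetric extension, and the vectors $\mathbf{e}^N_k = (\sin(\pi k h j))_j$ are exactly such extensions: the global sequence $j \mapsto \sin(\pi k h j)$ satisfies both reflection relations $\mathbf{u}_{-j} = -\mathbf{u}_j$ and $\mathbf{u}_{N+1+j} = -\mathbf{u}_{N+1-j}$ because $h(N+1)=1$. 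A short computation using the symmetry of $d$ and the addition formula for $\sin(\theta(i-m))$ then gives
\[ \mathbf{D}_N(d)\,\mathbf{e}^N_k = \widehat d(\pi k h)\,\mathbf{e}^N_k, \qquad \widehat d(\theta) := \sum_{m \in \mathbb{Z}} d_m \cos(m\theta) = d_0 + 2\sum_{m=1}^{\tau(d)} d_m \cos(m\theta). \]
So everything reduces to exhibiting $P \in R[X]$ with $\widehat d(\theta) = P\big(4\sin^2(\tfrac\theta2)\big)$ and $\deg P = \tau(d)$; granting this, \eqref{spectral_decompostion} yields $P(\mathbf{A}_N)\mathbf{e}^N_k = P\big(4\sin^2(\tfrac\pi2 kh)\big)\mathbf{e}^N_k = \mathbf{D}_N(d)\mathbf{e}^N_k$, and since the $\mathbf{e}^N_k$, $k=1,\dots,N$, form a basis of $\mathbb{C}^N$, the two matrices coincide for every $N$, not only for $N$ large.

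To produce $P$ I would invoke the classical fact that $2\cos(m\theta)$ is a monic polynomial of degree $m$ with integer coefficients in the variable $2\cos\theta$ (Chebyshev/Dickson polynomials). Writing $X = 4\sin^2(\tfrac\theta2) = 2-2\cos\theta$, i.e. $2\cos\theta = 2-X$, this expresses each $2\cos(m\theta)$ as an element $Q_m(X) \in \mathbb{Z}[X]$ of degree exactly $m$. Then $P := d_0 + \sum_{m=1}^{\tau(d)} d_m Q_m$ lies in $R[X]$ because the $Q_m$ have integer (hence $R$) coefficients and $d_m \in R$; the top-degree term is contributed only by $m = \tau(d)$, so $\deg P = \tau(d)$. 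Working with integer-coefficient polynomials in $2\cos\theta$ rather than in $\cos\theta$ is precisely what avoids spurious denominators and secures the refinement $P \in R[X]$.

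For the second item, consistency of order $\mu$ at distance $i$, see \eqref{def consistency formula boundary}, is by Taylor--Young equivalent to the vanishing of the defect on the monomials $u = x^p$ with $u(0)=0$, i.e. for $p=1,\dots,\mu+1$. I would first note that odd monomials cost nothing: the correction $(b^i_{i+j})_j$ is symmetric and centred at the origin, so $\sum_m b^i_m\,u''(mh)$ vanishes for odd $p$, while the $d$-part, rewritten through \eqref{chopin} as a convolution against the odd grid-extension of $u$, already satisfies the interior relation since $(d,s)$ is consistent of order $\mu$. For an even monomial $u = x^{2q}$ with $1 \le q \le \mu/2$, the same rewriting together with the interior Vandermonde relations satisfied by $(d,s)$ makes the $s$-contributions cancel, and what remains is a linear condition on $(\tfrac{b^i_0}2, b^i_1,\dots,b^i_{\mu/2-1})$ whose right-hand side involves only the reflected coefficients $d_{i+j}$. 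Collecting these $\mu/2$ conditions yields exactly the system \eqref{Vandermonde boundary}; its matrix $((i-1)^{2j-2})_{1\le i,j\le \mu/2}$ is Vandermonde in the distinct nodes $0,1,4,\dots,(\mu/2-1)^2$, hence invertible, which gives simultaneously the existence and uniqueness of $b^i$ with $\tau(b^i)\le \mu/2-1$.

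The routine parts are the two trigonometric and Taylor expansions. The genuinely delicate points are, for the first item, keeping the coefficients inside $R$ so that $P \in R[X]$ (the Dickson-polynomial normalisation), and, for the second item, the bookkeeping of the reflection \eqref{chopin}: one must carefully align the indices of the odd extension and verify that the interior consistency of $(d,s)$ is exactly what annihilates the $s$-terms, leaving only the shifted coefficients $d_{i+j}$ on the right-hand side of \eqref{Vandermonde boundary}.
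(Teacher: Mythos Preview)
Your proof is correct, but for the first item you take a different route from the paper. The paper works purely on the convolution side: it shows (Lemma~\ref{lemme toute formule est un polynome du prince}) that $\mathscr{S}_R$ is a free $R[X]$-module generated by $a$, via the binomial expansion of $a^{\star n}$, so that $d=P(a)$ for a unique $P\in R[X]$ with $\deg P=\tau(d)$; then (Lemma~\ref{lemme D_N est un morphisme de module}) the associativity of convolution gives $\mathbf D_N(P(a))=P(\mathbf D_N(a))=P(\mathbf A_N)$. You instead work on the spectral side, identifying the symbol $\widehat d(\theta)$ on the sine eigenvectors and using the Dickson normalisation $2\cos(m\theta)\in\mathbb Z[2\cos\theta]$ to keep the coefficients in $R$. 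The two arguments are dual: the paper's identity $\mathscr F d = P(4\sin^2(X/2))$ in \eqref{lien fourier polynomes} is precisely your $\widehat d(\theta)=P(4\sin^2(\theta/2))$, and the binomial expansion of $a^{\star n}$ is the convolution-side avatar of the Chebyshev recursion. Your approach is more concrete and requires the sine basis to be genuinely a basis for each $N$ (which it is), while the paper's module argument is basis-free and yields the $R[X]$-module structure of $\mathscr S_R$ as a by-product, which is reused later.

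For the second item your strategy matches the paper's: unwind the reflected formula \eqref{chopin}, use the interior consistency of $(d,s)$ to absorb the full convolution term, and observe that the residual is governed by the symmetric zero-mean formula $\widetilde d_j:=d_{i+j}$ for $j>0$, after which Proposition~\ref{prop si on a d alors on a s} delivers the Vandermonde system~\eqref{Vandermonde boundary}. The paper packages this last step as a direct invocation of Proposition~\ref{prop si on a d alors on a s}, whereas you redo the monomial bookkeeping inline; both are fine.
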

\begin{proof}
The first point will be proved in the next section as a direct consequence of Lemma \ref{lemme toute formule est un polynome du prince} and Lemma \ref{lemme D_N est un morphisme de module}.
To prove the second point, let consider $u \in C^{\infty}(\mathbb{R})$ such that $u(0)=0$. Then we have from \eqref{chopin}, for $i = 1,\ldots, \tau(d)$, 
\begin{align*}
\sum_{j > -i} d_j^i u(x^N_{j+i}) + h^2 \sum_{j\in \mathbb{Z}} s_j u''(x^N_{j+i}) &= \sum_{j > -i} \left( d_j - d_{j+2i} \right) u(x^N_{j+i}) + h^2 \sum_{j\in \mathbb{Z}} s_j u''(x^N_{j+i}) \\
																								   &= \sum_{j \in \mathbb{Z}}  d_j u(x^N_{j+i}) + h^2 s_j u''(x^N_{j+i}) - \sum_{j< -i } d_j u(x^N_{j+i}) - \sum_{j > -i } d_{j+2i} u(x^N_{j+i}) \\
																								   &= - \sum_{ j>0} d_{i+j} \left( u(x^N_{-j})+ u(x^N_{j}) \right) + \mathcal{O}(h^{\mu+2})\\
																								   &= - \sum_{ j\in \mathbb{Z}} \widetilde{d_j} u(x^N_{j}) + \mathcal{O}(h^{\mu+2}),
\end{align*}  
with $ \widetilde{d} \in \mathscr{S}_{\mathbb{C}} $ a symmetric finite difference formula with zero mean \eqref{zero mean} defined by $\widetilde{d_j} = d_{i+j} $ if $j>0$. Then applying Proposition \ref{prop si on a d alors on a s} enables to conclude the proof.
\end{proof}

\begin{rem}
The formula \eqref{Vandermonde boundary} implies in particular that $b^{\tau(d)}=0$ because, for $i=1,\dots,\tau(d)$, the right hand side term in \eqref{Vandermonde boundary} is zero by definition of $\tau(d)$.
\end{rem}
\medskip

To conclude this part, explicit expressions of a class a high order schemes constructed using the previous principle are proposed. They will be used to give examples.
\begin{propo}
\label{le schema de base}
Let $(d,s)\in \mathscr{S}_{\mathbb{C}}$ be  a couple of symmetric finite difference formulas that is consistent of order $n$ with $n\in 2\mathbb{N}^*$. Let $\mu\in \{n-2,n\}$ be an even integer. Define $l=\tau(d)-1$, $m=\tau(s)$ and for $i=1,\dots,l$, $b^i$ as the solution of the system \eqref{Vandermonde boundary}. If we choose $s^i=s+(b_{i+j}^i)_{j\in \mathbb{Z}} $ and $d^i=d_j - d_{2i+j}$ then the relations \eqref{def abstraite matrices D} and \eqref{def abstraite matrices S} define a scheme that is consistent of order $n$ in the center and of order $\mu$ at a distance $l$ of the boundary. More precisely, if $N$ is large enough, this scheme is given by the following band matrices:
\[ 
\mathbf{D}_N(d) = \left( \begin{matrix} d_0 & \dots & d_{l+1} \\
													\vdots  & \ddots &  & \ddots\\
													 d_{l+1} &  & \ddots &  &  \ddots \\
 															& \ddots && \ddots && d_{l+1}\\
 															&& \ddots && \ddots & \vdots \\
 															&&& d_{l+1} & \dots & d_0
\end{matrix} \right) - \left( \begin{matrix} d_2 & \dots & d_{l+1} \\
													\vdots  & \mathrm{\reflectbox{$\ddots$}} \\
													 d_{l+1} \\
 															&  && && d_{l+1}\\
 															&&  && \mathrm{\reflectbox{$\ddots$}} & \vdots \\
 															&&& d_{l+1} & \dots & d_2
\end{matrix} \right)
\in \mathscr{L}(\mathbb{C}^N),  \]
 \[ 
\mathbf{S}_N = \left( \begin{matrix} s_{m} & \dots & s_0 & \dots & s_m \\
																& \ddots &   & \ddots &  & \ddots \\
																&& \ddots &   & \ddots &  & \ddots \\
																&&& \ddots &   & \ddots &  & \ddots \\
																&&&& \ddots &   & \ddots &  & \ddots \\
																&&&&& s_{m} & \dots & s_0 & \dots & s_m \\
\end{matrix} \right) + \left( \begin{matrix} \mathbf{B}_{\mu}^{+} \\
														  &  \mathbf{0}_{N-\mu+2,N-\mu+2}  \\
														  & &  \mathbf{B}_{\mu}^{-}
\end{matrix} \right)
 \in \mathscr{L}(\mathbb{C}^\mathbb{Z};\mathbb{C}^N)\] 
 with  $\mathbf{0}_{N-\mu+2,N-\mu+2}$ the zero square matrix of size $N-\mu+2$,
 \[ 
\mathbf{B}_{\mu}^{+} = \left(  \begin{matrix} b^1_{ \frac{\mu}2-1 } & \dots  & \dots & b^1_{ 0 }  & \dots &  \dots & b^1_{ \frac{\mu}2-1 } \\
															 \vdots &  \vdots &  \vdots & \vdots &  \vdots &  \vdots & \vdots \\
															  b^{l}_{ \frac{\mu}2-1 }  & \dots & \dots & b^{l}_{ 0 }  & \dots & \dots &   b^{l}_{ \frac{\mu}2-1 }
\end{matrix} \right) \in  \mathscr{L}(\mathbb{C}^{\mu -1};\mathbb{C}^{l}) \]
and
\[ \mathbf{B}_{\mu}^{-} = \left(  \begin{matrix} b^{l}_{ \frac{\mu}2-1 }  & \dots & \dots & b^{l}_{ 0 }  & \dots & \dots &   b^{l}_{ \frac{\mu}2-1 }\\
															 \vdots &  \vdots &  \vdots & \vdots &  \vdots &  \vdots & \vdots \\
															 b^1_{ \frac{\mu}2-1 } & \dots  & \dots & b^1_{ 0 }  & \dots &  \dots & b^1_{ \frac{\mu}2-1 } 												  
\end{matrix} \right) \in \mathscr{L}(\mathbb{C}^{\mu -1};\mathbb{C}^{l}).
 \]
 \end{propo}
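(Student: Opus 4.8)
The statement splits into two essentially independent parts: the consistency claim and the explicit matrix expressions, and I would treat them separately. For the consistency, the plan is to assemble the earlier results rather than recompute anything. By the second point of Proposition \ref{prop de D_N}, taking $b^i$ to be the solution of \eqref{Vandermonde boundary} makes each boundary pair $d^i_j = d_j - d_{2i+j}$, $s^i = s + (b^i_{i+j})_j$ consistent of order $\mu$ at distance $i$ in the sense of \eqref{def consistency formula boundary}; Proposition \ref{prop the schemes are consistent} then delivers consistency of order $n$ in the center together with order $\mu$ at distance $\tau(d)$. The only thing left is to sharpen the distance from $\tau(d)$ to $l=\tau(d)-1$, and for this I would inspect the single row $i=\tau(d)$. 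The remark following Proposition \ref{prop de D_N} gives $b^{\tau(d)}=0$, so $s^{\tau(d)}=s$, while in $d^{\tau(d)}_j = d_j - d_{2\tau(d)+j}$ the correction $d_{2\tau(d)+j}$ vanishes for every $j>-\tau(d)$ since then $2\tau(d)+j>\tau(d)$. Hence, on vectors extended oddly (so that $\mathbf{u}_0=0$), the formula at distance $\tau(d)$ coincides with the interior order-$n$ formula $(d,s)$, whence rows $\tau(d)$ and symmetrically $N+1-\tau(d)$ are in fact order $n$; only rows $1,\dots,l$ and $N+1-l,\dots,N$ keep the reduced order $\mu$, which is precisely \eqref{def extend consistency} at distance $l$.

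For the matrix expressions, I would simply unwind \eqref{def abstraite matrices D} and \eqref{def abstraite matrices S} with the explicit boundary formulas. For $\mathbf{D}_N(d)$, the interior rows $\tau(d)<i<N+1-\tau(d)$ give $(\mathbf{D}_N\mathbf{u})_i=\sum_j d_{i-j}\mathbf{u}_j$, i.e.\ the symmetric $(l+1)$-banded Toeplitz matrix with entries $d_{|i-j|}$ (the first displayed matrix, since $d_k=0$ for $|k|>l+1$). For a boundary row $1\leq i\leq \tau(d)$, formula \eqref{chopin} yields $(\mathbf{D}_N\mathbf{u})_i=\sum_{j>0}(d_{j-i}-d_{j+i})\mathbf{u}_j$: the term $d_{j-i}$ completes the same band, while $-d_{j+i}$ is the correction, nonzero exactly when $j+i\leq l+1$, i.e.\ for $j=1,\dots,l+1-i$ with value $-d_{i+j}$. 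Collecting these over $i=1,\dots,l$ (with row $\tau(d)$ contributing nothing) produces the upper-left anti-triangular block carrying $d_2,\dots,d_{l+1}$, and the symmetric construction at the right boundary gives the lower-right block. For $\mathbf{S}_N$, the interior rows give the $(2m+1)$-banded Toeplitz with entries $s_{|i-j|}$, and in a boundary row the correction is $s^i_{j-i}-s_{j-i}=b^i_j$, which by symmetry of $b^i$ and $\tau(b^i)\leq \frac{\mu}2-1$ is supported on the $\mu-1$ columns $j=-(\frac{\mu}2-1),\dots,\frac{\mu}2-1$ with values $b^i_{|j|}$. As $b^{\tau(d)}=0$, only rows $i=1,\dots,l$ contribute and assemble into $\mathbf{B}_\mu^+$, the symmetric bottom rows assemble into $\mathbf{B}_\mu^-$, and every remaining row is zero.

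All the conceptual content lives in the two cited propositions, so the work that remains is index bookkeeping, and I expect the genuinely fiddly point to be matching the anti-triangular pattern of the boundary blocks and the exact placement and sizes of $\mathbf{B}_\mu^{\pm}$ (and the intervening zero block) inside the full $N\times N$, respectively $N\times\mathbb{Z}$, matrices: one must check that the ranges $j=1,\dots,l+1-i$ for $\mathbf{D}_N$ and the two column windows of width $\mu-1$ around the grid points $x_0$ and $x_{N+1}$ for $\mathbf{S}_N$ line up precisely with the displayed blocks.
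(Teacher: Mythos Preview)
Your proposal is correct and matches the intended argument: the paper gives no proof of this proposition, treating it as a direct summary of Propositions \ref{prop the schemes are consistent} and \ref{prop de D_N} together with an explicit unwinding of \eqref{def abstraite matrices D}--\eqref{def abstraite matrices S}. Your observation that the row $i=\tau(d)$ degenerates to the interior formula (via $b^{\tau(d)}=0$ and $d_{2\tau(d)+j}=0$ for $j>-\tau(d)$), which sharpens the boundary distance from $\tau(d)$ to $l=\tau(d)-1$, is exactly the small point the paper leaves implicit.
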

 
\subsection{Main results}
%The main results of this article answer to two very different questions about the schemes that we have designed in Proposition \ref{le schema de base}.
%\begin{itemize}
%\item For each order of consistency, what is the most efficient of these schemes? Is it stable?
%\item In general, are these schemes stable?
%\end{itemize}
%Of course, to answer to these questions, we need to precise the two ambiguous terms "efficient" and "general".
%
%\medskip

We will first define the notion of {\em efficiency} discussed in the introduction. 
If we design our schemes as in Proposition \ref{le schema de base}, and unless some more specific algebraic structure is given, the computation time for the approximation of one solution of the Dirichlet problem -- that is the solution of the linear system \eqref{linear system}--  grows {\em a priori} linearly with the size of the stencils $\tau(d)$ and $\tau(s)$.  As a consequence, in general, the smaller $\tau(d)$ and $\tau(s)$ are, the larger the order of consistency of $(d,s)$ is, and hence the more \it efficient \rm is our scheme. As a consequence, we will define schemes to be {\em the most efficient} for given $l,m\in \mathbb{N}$, those which are solutions to the following optimization problem:  \begin{equation}
\label{le pb dopti}
 \mathop{\max_{(d,s)\in \mathscr{S}_{\mathbb{C}}^2\setminus \{(0,0) \}}}_{\tau(d)\leq l+1,\ \tau(s)\leq m} \ord(d,s),
\end{equation}
where $\ord(d,s)$ is the exact order of consistency of $(d,s)$
\[ \ord(d,s)=\sup \{n\in 2\mathbb{N} \ | \ (d,s) \textrm{ is consistent of order } n \textrm{ according to \eqref{def consistency formula}}\}. \]

\medskip

The following theorem proves that for any given stencil sizes $l$ and $m$ in $\mathbb{N}$, there exists a most efficient scheme solution of the previous optimization problem, and it is unique, up to a multiplication by a scalar. 
\begin{theo} \label{theo les formules optimales}
For all $l,m\in \mathbb{N}$, there exists a couple of rational symmetric formulas $(d^{l,m},s^{l,m})\in \mathscr{S}_{\mathbb{Q}}^2$ such that
\[ \left\{  \begin{array}{llll} \tau(d^{l,m})=l+1, \\
									\tau(s^{l,m})=m,\\
									\displaystyle \sum_{j\in \mathbb{Z}} d^{l,m}_j j^2 = -2,
\end{array} \right. \]
 that is solution of the problem of optimization
\[ \mathop{\max_{(d,s)\in \mathscr{S}_{\mathbb{C}}^2\setminus \{(0,0) \}}}_{\tau(d)\leq l+1,\ \tau(s)\leq m} \ord(d,s) = \ord(d^{l,m},s^{l,m})= 2(l+m+1).\]
Moreover if $(d,s)\in \mathscr{S}_{\mathbb{C}}^2$ is such that $\tau(d)\leq l+1$, $\tau(s)\leq m$ and $\ord(d,s)=2(l+m+1)$ then there exists $\lambda\in \mathbb{C}$ such that $d=\lambda d^{l,m}$ and $s=\lambda s^{l,m}$.
\end{theo}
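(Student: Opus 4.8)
The plan is to pass to the Fourier symbols and recognise the optimisation problem \eqref{le pb dopti} as a Padé approximation problem. Since $d,s$ are symmetric with $\tau(d)\le l+1$ and $\tau(s)\le m$, testing the consistency relation \eqref{def consistency formula} on $u(x)=e^{i\omega x}$ and setting $\xi=\omega h$ shows that $(d,s)$ is consistent of order $n$ if and only if $\widehat d(\xi)-\xi^2\widehat s(\xi)=\mathcal O(\xi^{n+2})$ as $\xi\to0$, where $\widehat d(\xi)=\sum_j d_j e^{ij\xi}$ and $\widehat s(\xi)=\sum_j s_j e^{ij\xi}$. Since every symmetric formula is a polynomial in the symbol $t=4\sin^2(\xi/2)$ of $a$ (cf.\ \eqref{spectral_decompostion}), there are polynomials $P,Q\in\mathbb C[X]$ with $\deg P\le l+1$, $\deg Q\le m$ and $\widehat d(\xi)=P(t)$, $\widehat s(\xi)=Q(t)$. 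Writing $\xi^2=\phi(t)$ with $\phi(t)=\big(2\arcsin(\tfrac12\sqrt t)\big)^2=t+\tfrac1{12}t^2+\cdots=2\sum_{k\ge1}\frac{t^k}{k^2\binom{2k}{k}}$, the order condition becomes $P(t)-\phi(t)\,Q(t)=\mathcal O(t^{(n+2)/2})$. Thus $\ord(d,s)=2(l+m+1)$ corresponds exactly to $P-\phi Q=\mathcal O(t^{l+m+2})$, i.e.\ to $P/Q$ being the $[l+1/m]$ Padé approximant of $\phi$.

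With this dictionary the assertions of the theorem are the existence, the exactness and the uniqueness (up to a scalar) of that approximant. Existence is immediate: requiring the coefficients of $t^{l+2},\dots,t^{l+m+1}$ in $\phi Q$ to vanish is a homogeneous linear system of $m$ equations in the $m+1$ unknowns $s_0,\dots,s_m$, hence has a nonzero solution, and one then sets $P$ to be the truncation of $\phi Q$ at order $l+1$. For uniqueness of the ratio I would use the determinantal trick: if $(P_1,Q_1),(P_2,Q_2)$ are nontrivial solutions then $R:=P_1Q_2-P_2Q_1$ has degree $\le l+m+1$ yet equals $(P_1-\phi Q_1)Q_2-(P_2-\phi Q_2)Q_1=\mathcal O(t^{l+m+2})$, so $R\equiv0$ and $P_1/Q_1=P_2/Q_2$. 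To upgrade this to uniqueness of the \emph{pair} up to a scalar, to obtain the exact degrees $\deg P=l+1$ and $\deg Q=m$ (that is $\tau(d^{l,m})=l+1$, $\tau(s^{l,m})=m$), the exact order $2(l+m+1)$ and $Q(0)\ne0$, it suffices that the entry $[l+1/m]$ be a \emph{normal} point of the Padé table of $\phi$; this is guaranteed by the non-vanishing of the Hankel determinants built from the Taylor coefficients of $\phi$. Once normality holds, forming for two solutions a combination that cancels the leading coefficient of $Q$ produces a solution of degree $<m$ in $Q$, hence the trivial one, which yields proportionality.

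The heart of the proof, and the step I expect to be the main obstacle, is the non-vanishing of all these Hankel determinants, uniformly in $l$ and $m$. I would obtain it by showing that $\Phi(t):=\phi(t)/t=\sum_{k\ge0}c_k t^k$, with $c_k=\frac{2}{(k+1)^2\binom{2k+2}{k+1}}$, is a Stieltjes function. Indeed $\phi(t)=\arccos^2(1-\tfrac t2)$ is holomorphic on $\mathbb C\setminus[4,\infty)$, and evaluating the jump of $\Phi$ across the cut, where $\arccos(1-\tfrac t2)=\pi\mp i\,\mathrm{arccosh}(\tfrac t2-1)$ for $t>4$, exhibits a representation $\Phi(t)=\int_0^{1/4}\frac{d\mu(x)}{1-tx}$ with $\mu$ a positive measure; $\mu$ has infinite support since $\Phi$ is not rational. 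Consequently $(c_k)_{k\ge0}$ is a Stieltjes moment sequence and all the Hankel determinants $\det(c_{i+j})_{0\le i,j\le r}$ and $\det(c_{i+j+1})_{0\le i,j\le r}$ are strictly positive; this is precisely the normality of the whole Padé table of $\phi$, which simultaneously delivers the required properties for every $(l,m)$ and rules out any order exceeding $2(l+m+1)$.

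It remains to record rationality and the normalisation. Since all $c_k$ are rational, the linear systems above have rational coefficients, so the one-dimensional solution line is spanned by a rational pair $(d^{l,m},s^{l,m})\in\mathscr S_{\mathbb Q}^2$. Because $Q(0)=\widehat s(0)=\sum_j s_j\ne0$ by normality, and since consistency of order $\ge2$ forces $\sum_j d_j j^2=-2\sum_j s_j$, I would finally rescale the pair so that $\sum_j s_j=1$, that is $\sum_j d_j^{l,m} j^2=-2$, giving the stated normalisation.
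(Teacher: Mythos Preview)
Your argument follows the same overall strategy as the paper: pass to Fourier symbols, rewrite the order condition as $P(t)-\phi(t)Q(t)=\mathcal O(t^{\frac{n}{2}+1})$, and recognise the optimisation problem as a Pad\'e approximation problem for $C(t)=\phi(t)/t$; the existence, exact degrees, exact order, and uniqueness up to scalar then all follow from normality of the Pad\'e table of $C$, and rationality/normalisation are handled exactly as you do. The paper differs only in how it obtains normality: instead of your direct Stieltjes--representation argument via the branch cut of $\arccos^2(1-\tfrac{t}{2})$, it identifies $C(X)={}_3F_2\!\bigl[1,1,1;\tfrac32,2;\tfrac{X}{4}\bigr]$ and invokes a theorem of Karp--Prilepkina guaranteeing that the Pad\'e table of such generalised hypergeometric series is normal. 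Your route is more self-contained (and, incidentally, the Stieltjes property you establish is exactly what the paper later uses to prove strong stability of the optimal schemes); the paper's route is shorter since it defers the analytic work to a reference. One small point: positivity of the two Hankel families $\det(c_{i+j})$ and $\det(c_{i+j+1})$ alone does not literally yield normality at \emph{every} entry $[l/m]$; what you actually need, and what the Stieltjes representation with infinite support does give, is that \emph{all} shifted Hankel determinants $\det(c_{n+i+j})$, $n\ge 0$, are positive---you should state this explicitly.
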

This theorem is the main result of the second section of this work (see Theorem \ref{theo les poly optimaux}). The proof relies on an interpretation of the optimization problem \eqref{le pb dopti} as Pad\'e approximant problem. The optimal formulas of this theorem are effective because we can prove, with the property of uniqueness of Theorem \ref{theo les schemas optimaux sont stables}, that they can be computed exactly as the solutions of these rational $(l + m + 3) \times (l + m + 3)$ linear systems
\begin{equation}
\label{linear system for optimal formulas}
\left( \begin{matrix} \mathbf{L}_{l+1}^{0}   & \mathbf{0}_{1,m+1}        \\
							  \mathbf{L}_{l+1}^{2}   & \mathbf{0}_{1,m+1}        \\
							  \mathbf{L}_{l+1}^{2} & 2\mathbf{L}_{m}^{0} \\
							 \vdots & \vdots \\
							    \mathbf{L}_{l+1}^{2(l+m+1)}  & 2(m+l+1)(2(m+l)+1)\mathbf{L}_{m}^{2(l+m)}
\end{matrix} \right) \left( \begin{matrix}  d_0^{l,m} \\  \vdots \\  d_{l+1}^{l,m}\\  s_0^{l,m} \\ \vdots \\  s_{m}^{l,m} \end{matrix}  \right) = \left( \begin{matrix} 0 \\ -1 \\ 0 \\ \vdots \\ 0 \end{matrix}  \right) ,
\end{equation}
with 
\[ \mathbf{L}_{n}^{k} =  \left( \begin{matrix}  \frac{0^k}2 & 1^k & \dots & n^k  \end{matrix} \right) \textrm{ and } \mathbf{0}_{1,m+1} \textrm{ the zero row matrix of size }m+1. \]

\medskip

The formulas $d^{l,m}$ being constructed as the solution of an optimization problem, there is {\em a priori} no reason that they generate stable schemes. 
However, the following theorem, which will be proved in the third section (see Application 2 of Theorem \ref{theorem_positive_case}), precisely states that  all these optimal schemes are indeed strongly stable. 
\begin{theo} \label{theo les schemas optimaux sont stables}
For all $l\in \mathbb{N}^*$ and for all $m\in \mathbb{N}$, $(\mathbf{D}_N(d^{l,m}))_{N\in \mathbb{N}^*}$ is strongly stable, see \eqref{def strong stability}.
\end{theo}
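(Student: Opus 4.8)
The plan is to exploit the explicit spectral structure \eqref{scarlatti}. By the first point of Proposition~\ref{prop de D_N}, $\mathbf{D}_N(d^{l,m}) = P(\mathbf{A}_N)$ for a single polynomial $P = P^{l,m}\in\mathbb{Q}[X]$ with $\deg P = \tau(d^{l,m}) = l+1$, so by \eqref{scarlatti} the eigenvalues of $\mathbf{D}_N(d^{l,m})$ are exactly the numbers $P(X_k)$, where $X_k := 4\sin^2(\tfrac{\pi}{2}kh)$ for $k=1,\dots,N$, associated to the eigenvectors $\mathbf{e}^N_k$; these nodes fill $(0,4)$ as $N\to\infty$. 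Two features of $P$ are immediate. First, since any formula consistent of order $\geq 0$ has zero mean, $P(0)=\sum_j d^{l,m}_j = 0$, so $P$ factors as $P(X)=X\,\widetilde{P}(X)$ with $\deg\widetilde{P}=l$. Second, the normalisation $\sum_j d^{l,m}_j j^2 = -2$ of Theorem~\ref{theo les formules optimales} forces $\widetilde{P}(0)=P'(0)=1>0$. Thus strong stability of $(\mathbf{D}_N(d^{l,m}))_N$ reduces to a statement purely about the location of the nonzero roots of $P$ relative to the interval $(0,4]$.

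First I would pin down $P$ analytically. Testing the consistency relation \eqref{def consistency formula} on exponentials $u=e^{i\xi x}$ turns it into a statement on symbols: writing $X=4\sin^2(\theta/2)$ with $\theta=\xi h$, consistency of $(d^{l,m},s^{l,m})$ of order $2(l+m+1)$ becomes equivalent to
\[ P(X) - \phi(X)\,Q(X) = \mathcal{O}\!\left(X^{\,l+m+2}\right), \qquad X\to 0, \]
where $Q=Q^{l,m}$ is the degree-$m$ polynomial representing the symbol of $s^{l,m}$ (each $\cos(j\theta)$ being a polynomial in $X$), and $\phi$ is the analytic inverse of $\theta\mapsto 4\sin^2(\theta/2)$ near $0$, namely $\phi(X)=\theta^2=4\arcsin^2(\tfrac{\sqrt{X}}{2})$. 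Since $\deg P=l+1$ and $\deg Q=m$, this says precisely that $P/Q$ is the $[\,l+1\,/\,m\,]$ Padé approximant of $\phi$. Observe that $\phi$ is exactly the function sending the node $X_k$ to the true eigenvalue $(k\pi h)^2$ of the continuous Dirichlet problem, which is why $P(X_k)$ approximates it so well for small $k$.

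The heart of the proof, and the step I expect to be the main obstacle, is to show that $\widetilde{P}$ has no zero in $(0,4]$, so that $P(X)>0$ throughout $(0,4]$; ideally one proves that every nonzero root of $P$ is real and lies in $(4,+\infty)$. The consistency estimate above only controls $P$ near $X=0$ and says nothing near the other endpoint $X=4$ (i.e. $\theta$ near $\pi$), so a global argument is unavoidable, and this is where Padé approximant theory for the specific function $\phi$ enters. The favourable structure to exploit is that $\phi$ is analytic on $\mathbb{C}\setminus[4,+\infty)$, real and increasing on $(0,4)$, with a Herglotz/Stieltjes-type integral representation governing its jump across the cut $[4,+\infty)$; for such functions the numerator and denominator of the $[\,l+1\,/\,m\,]$ Padé approximant carry only real zeros, interlacing and located on the cut $[4,+\infty)$ (together with the forced root at $0$ for $P$). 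This keeps $P$ of constant sign on $(0,4)$, and the normalisation $\widetilde{P}(0)=1$ fixes that sign to be positive. Making this zero-localisation rigorous, e.g.\ by identifying the Padé denominators with orthogonal polynomials for the measure attached to $\phi$, or by a sign-definite remainder formula for the Padé error, is the technical core.

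Granting $P(X)=X\,\widetilde{P}(X)$ with $\widetilde{P}>0$ on $[0,4]$ and its nonzero roots in $(4,+\infty)$, strong stability follows by feeding this into the positive-case theorem (Theorem~\ref{theorem_positive_case}, Application~2). Concretely one writes $\mathbf{D}_N(d^{l,m})^{-1}=\widetilde{P}(\mathbf{A}_N)^{-1}\mathbf{A}_N^{-1}$. The factor $\mathbf{A}_N^{-1}$ is the discrete Green operator of the Dirichlet Laplacian, with explicit nonnegative kernel $(\mathbf{A}_N^{-1})_{ij}=\tfrac{\min(i,j)(N+1-\max(i,j))}{N+1}$; its interior row sums are of size $h^{-2}$, carrying the required gain, while its boundary row sums are only of size $O(1)$, which is exactly matched by the weaker boundary weight in \eqref{def strong stability}, so that $\|\mathbf{A}_N^{-1}g\|_\infty$ is bounded by the weighted norm of $g$ uniformly in $N$. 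The factor $\widetilde{P}(\mathbf{A}_N)^{-1}$, being a product of resolvents $(\mathbf{A}_N-r)^{-1}$ at real points $r\notin[0,4]$, is uniformly bounded in the operator $\infty$-norm (each such resolvent is controlled by a Neumann series or the discrete maximum principle, with bound independent of $N$). Combining the two estimates yields the weighted bound \eqref{def strong stability} with a constant independent of $N$, which is the assertion of the theorem.
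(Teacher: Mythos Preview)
Your approach is exactly that of the paper: reduce to Theorem~\ref{theorem_positive_case} by showing that $\widetilde{P}=R_{l,m}$ has no zero in $(0,4]$, and recognise $R_{l,m}/Q_{l,m}$ as the $[l/m]$ Pad\'e approximant of the Stieltjes-type function $C(X)=\phi(X)/X$. Your final paragraph essentially re-derives the content of Theorem~\ref{theorem_positive_case} (the factorisation $P(\mathbf{A}_N)^{-1}=\widetilde{P}(\mathbf{A}_N)^{-1}\mathbf{A}_N^{-1}$, the explicit Green kernel of $\mathbf{A}_N$, and the uniform resolvent bound of Lemma~\ref{lemma_op_borne}) and becomes redundant once that theorem is invoked.

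The step you correctly flag as the technical core---localising the nonzero roots of $P$ in $(4,+\infty)$---is indeed where all the work lies, and the paper fills it not by a direct Herglotz/orthogonal-polynomial argument as you suggest, but by quoting classical results on Pad\'e approximants of Stieltjes series from Gilewicz's book, after Karp--Prilepkina establish that $C(-X)$ is the Stieltjes transform of a measure supported on $[0,4]$. For $l\le m$ the cited theorem places the zeros of $R_{l,m}$ directly in $(4,+\infty)$. For $l>m$ the available Stieltjes result locates only the zeros of the \emph{denominator} $Q_{l,m}$ there, and the paper closes the remaining gap by an induction on $m$ along each para-diagonal $l-m=k\ge 0$: the Stieltjes--Wynn monotonicity $\frac{R_{l,m}}{Q_{l,m}}\le \frac{R_{l+1,m+1}}{Q_{l+1,m+1}}$ on $[0,4]$, combined with $Q_{\cdot,\cdot}>0$ there, propagates positivity of $R_{l,m}$ on $[0,4]$ from the base case $R_{k,0}$, which is explicitly the $k$-th partial sum of the series \eqref{arcsin_rescalle} and hence positive on $[0,+\infty)$. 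This two-case split is the concrete substitute for the general zero-localisation principle you invoke.
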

In particular, following Theorem \ref{Lax}, the schemes designed in Proposition \ref{le schema de base}, with $d=d^{l,m}$, $s={l,m}$, $n=2(l+m+1)$ and $\mu=2(l+m)$, are convergent of order $2(l+m+1)$.\\
Experimentally, these schemes are very efficient and we can notice that the smaller $|m-l|$ is, the more accurate the scheme is.
This is illustrated in Figure \ref{convergence_curves_ordre_10} in which some convergence plots are displayed for the $10^{th}$ order optimal formulas (i.e. $l + m + 1 = 5$)

\medskip

\begin {figure}[!ht]
	\begin{center}
		\includegraphics[width=12cm]{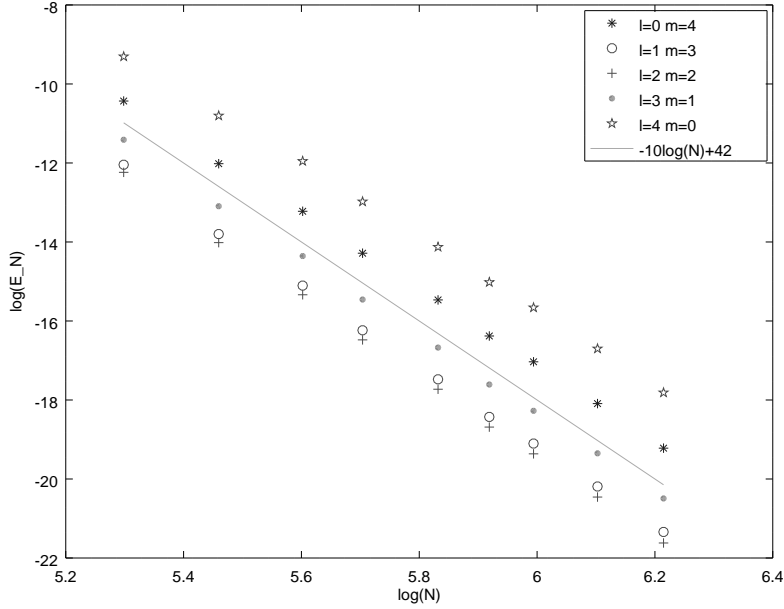}
	\end{center}
	\caption{\label{convergence_curves_ordre_10} Convergence curves, with $u(x)=x(1-x)e^{4\cos(41x)}$  and $E_N:=\|\mathbf{u}^N-\mathbf{u}^{N,ex}\|_{\infty}$, $N\in \{ 200,235,271,300,341,372,401,447,500 \}$, for the optimal schemes designed in Proposition \ref{le schema de base}, with $n=10$, $\mu=8$, $d=d^{l,m}$ and $s=s^{l,m}$.}
\end {figure}

\medskip

As explained in the introduction, we now address the question of {\em generic} performance of the schemes that we have constructed above: are they stable and convergent {\em in general} once the algebraic order conditions are satisfied. 
To give a meaning to this question, we decide to use measure theory. Of course there exist formulas such that $(\mathbf{D}_N(d))_{N}$ can not be stable. It is the case, for example, when $\mathbf{D}_N(d)$ is not invertible for all $N$ which occurs for when the polynomial $P$ defining the scheme admit a root of the form $ 4\sin^2 \left( \frac{\pi}2 k h \right) $, see \eqref{scarlatti}, which are eigenvalues of the matrix $\mathbf{A}_N$. But even if this is not the case, these eigenvalues can be very close to the roots of $P$, which induces small denominators in the stability estimates. Of course, these situations have to be avoided as well. 

The following theorem gives an answer to these questions (see Application 1 of Theorem \ref{theorem_positive_case} and Application 2 of Theorem \ref{le_theoreme_diophantien} for the proof).
\begin{theo}
\label{theo alea}
Let $\mathbb{K}\in \{ \mathbb{R},\mathbb{C}\}$ be a field and $l\in \mathbb{N}$ be an integer. Let $\mathscr{C}_{\mathbb{K},l}$ be the $\mathbb{K}$ finite dimensional vector space of symmetric formulas $d\in \mathscr{S}_{\mathbb{K}}$ with zero mean \eqref{zero mean} and $\tau(d)\leq l+1$ 
\[  \mathscr{C}_{\mathbb{K},l} = \{ d\in \mathscr{S}_{\mathbb{K}} \ | \  \sum_{j\in \mathbb{Z}} d_j= 0 \textrm{ and } \tau(d)\leq l+1 \}. \]
Then for any Lebesgue measure on $\mathscr{C}_{\mathbb{K},l}$, we have that: 
\begin{itemize}
\item For almost all $d\in  \mathscr{C}_{\mathbb{C},l}$, $(\mathbf{D}_N(d))_{N\in \mathbb{N}^*}$ is strongly stable \eqref{def strong stability}.
\item For almost all $d\in  \mathscr{C}_{\mathbb{R},l}$, $(\mathbf{D}_N(d))_{N\in \mathbb{N}^*}$ is stable relatively to any sequence $(\eta_N)_N$ \eqref{def relative stability} such that
\[ \sum_{N\in \mathbb{N}^*} \frac{N+1}{\eta_N} < \infty \textrm{ and }  \sup_{N\in \mathbb{N}^*} \frac{(N+1)^2}{\eta_N} < \infty.  \]
\end{itemize}
\end{theo}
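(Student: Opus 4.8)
The plan is to work entirely through the spectral picture \eqref{scarlatti}. Writing $\mathbf{D}_N(d)=P_d(\mathbf{A}_N)$ with $P_d$ the polynomial of Proposition \ref{prop de D_N}, the eigenpairs of $\mathbf{D}_N(d)$ are $(\lambda_{k,N},\mathbf{e}^N_k)$ with $\lambda_{k,N}=P_d(x_k^N)$ and $x_k^N:=4\sin^2(\tfrac{\pi}{2}kh)\in(0,4)$. The zero-mean condition \eqref{zero mean} forces $P_d(0)=0$, so $P_d(X)=X\,Q_d(X)$ with $\deg Q_d\le l$, and by Proposition \ref{prop de D_N} the map $d\mapsto P_d$ is a linear isomorphism from $\mathscr{C}_{\mathbb{K},l}$ onto the space of degree-$\le(l+1)$ polynomials vanishing at $0$; since a linear change of variables sends null sets to null sets, it is equivalent to argue for almost every $Q_d$ of degree $\le l$. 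The elementary but decisive observation is that, for fixed $k,N$, the eigenvalue $\lambda_{k,N}=L_{k,N}(d)$ is a \emph{linear} functional of $d$, explicitly $L_{k,N}(d)=2\sum_{j\ge 1}(\cos(j\theta_k^N)-1)d_j$ with $\theta_k^N=\pi k h$, and keeping only the $j=1$ term (for which $\cos\theta_k^N-1=-x_k^N/2$) gives the norm lower bound $\|L_{k,N}\|\ge x_k^N$.

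First I would reduce both stability notions to lower bounds on the $\lambda_{k,N}$. Since $P_d(X)=XQ_d(X)$ one has the factorization $\mathbf{D}_N(d)=\mathbf{A}_N\,Q_d(\mathbf{A}_N)$, hence $\mathbf{D}_N(d)^{-1}=Q_d(\mathbf{A}_N)^{-1}\mathbf{A}_N^{-1}$; the classical matrix $\mathbf{A}_N$ is strongly stable, its inverse being the bounded discrete Green's function, so everything hinges on a uniform sup-norm bound for the correcting factor $Q_d(\mathbf{A}_N)^{-1}$. When $Q_d$ does not vanish on $[0,4]$, $1/Q_d$ is analytic on a neighbourhood of the spectrum and a sup-norm functional-calculus estimate bounds $\|Q_d(\mathbf{A}_N)^{-1}\|_{\infty\to\infty}$ uniformly in $N$, yielding strong stability; when $Q_d$ vanishes inside $(0,4)$ this factor degrades, and one obtains stability relatively to $(\eta_N)$ as soon as $\eta_N\min_k|\lambda_{k,N}|\ge c>0$ for all $N$. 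This sup-norm reduction is the analytic core (isolated in the paper as Theorems \ref{theorem_positive_case} and \ref{le_theoreme_diophantien}); I would grant it and concentrate on the genericity, which is where the two fields $\mathbb{K}$ diverge.

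For the complex case (strong stability) the set to discard is $\mathcal{B}_{\mathbb{C}}=\{d\in\mathscr{C}_{\mathbb{C},l}:\exists\,t\in[0,4],\ Q_d(t)=0\}$. For each fixed $t$ the map $d\mapsto Q_d(t)$ is a nonzero $\mathbb{C}$-linear functional, so $\{Q_d(t)=0\}$ is a real-codimension-$2$ hyperplane; sweeping $t$ over the one-parameter interval $[0,4]$ shows that $\mathcal{B}_{\mathbb{C}}$ is the projection of a real-analytic set of dimension $\le 2l+1$ inside the real $(2l+2)$-dimensional space $\mathscr{C}_{\mathbb{C},l}$, hence is Lebesgue-null. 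Thus for almost every complex $d$ one has $Q_d\neq 0$ on $[0,4]$, and strong stability follows from the reduction above.

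For the real case (relative stability) $Q_d$ may, and generically does, vanish inside $(0,4)$, so the interior roots cannot be avoided and the resulting small denominators must be controlled quantitatively; this is the heart of the theorem and the step I expect to be hardest. Fix a bounded box $K$ and split the eigenvalues. Near $0$, for almost every $d$ one has $c_0:=Q_d(0)=P_d'(0)\neq 0$; choosing $x_0\le |c_0|/(2\sup_{[0,4]}|Q_d'|)$ gives $|\lambda_{k,N}|\ge\tfrac{|c_0|}2 x_k^N\ge\tfrac{|c_0|}2 x_1^N$ whenever $x_k^N\le x_0$, so $\eta_N|\lambda_{k,N}|\gtrsim \eta_N/(N+1)^2$, which is bounded below precisely because $\sup_N (N+1)^2/\eta_N<\infty$. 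Away from $0$, i.e.\ for indices with $x_k^N\ge x_0$, I use the linearity of $\lambda_{k,N}$ together with the slab estimate
\[
\mathrm{meas}\big\{d\in K:\ |\lambda_{k,N}(d)|<\delta\big\}\ \le\ C_K\,\frac{\delta}{\|L_{k,N}\|}\ \le\ \frac{C_K}{x_0}\,\delta,
\]
so that, summing over the at most $N$ such indices and taking $\delta=c/\eta_N$,
\[
\mathrm{meas}\big\{d\in K:\ \exists k,\ x_k^N\ge x_0,\ \eta_N|\lambda_{k,N}|<c\big\}\ \le\ \frac{C_K c}{x_0}\,\frac{N}{\eta_N}.
\]
Since $\sum_N (N+1)/\eta_N<\infty$, the Borel--Cantelli lemma gives that for almost every $d\in K$ only finitely many $N$ are offending; the finitely many remaining pairs $(k,N)$ are handled deterministically, because almost every $d$ avoids the countably many exact-resonance hyperplanes $\{\lambda_{k,N}=0\}$. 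Running this for the countable family of thresholds $x_0=2^{-m}$, intersecting the resulting full-measure sets, and combining with the near-$0$ estimate yields $\eta_N\min_k|\lambda_{k,N}|\ge c(d)>0$ for almost every $d\in K$, hence relative stability; a countable cover of $\mathscr{C}_{\mathbb{R},l}$ by boxes finishes the proof. The decisive point is exactly this matching of the two hypotheses on $(\eta_N)$ to the two sources of small eigenvalues, the forced simple zero at $0$ via the supremum condition and the generic interior zeros via the summability condition and Borel--Cantelli; the rigorous sup-norm reduction of the second paragraph is the other genuinely technical ingredient.
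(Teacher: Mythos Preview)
Your complex case and your Borel--Cantelli genericity argument are both correct and essentially the same as the paper's (the paper uses a Fubini argument for $\mathbb{K}=\mathbb{C}$ and a Gaussian slab estimate plus Borel--Cantelli for $\mathbb{K}=\mathbb{R}$, but the content is identical to your dimension count and your slab estimate). The gap is in your reduction step for the real case.

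You claim that ``one obtains stability relatively to $(\eta_N)$ as soon as $\eta_N\min_k|\lambda_{k,N}|\ge c>0$'' and attribute this to Theorem \ref{le_theoreme_diophantien}. That is not what Theorem \ref{le_theoreme_diophantien} says, and the claim is not true as stated. A lower bound on $\min_k|\lambda_{k,N}|$ controls $\|\mathbf{D}_N^{-1}\|_{2\to 2}$, but the stability in \eqref{def relative stability} is in $\ell^\infty$; what the proof of Theorem \ref{le_theoreme_diophantien} actually needs (see Appendix \ref{proof of the Diophantine criterion}) is the \emph{sum} bound
\[
\sum_{k=1}^N \frac{1}{|\lambda_{k,N}|}\ \le\ C\,\eta_N,
\]
and that cannot be extracted from $\min_k|\lambda_{k,N}|\ge c/\eta_N$ alone. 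The paper obtains the sum bound by exploiting the polynomial structure you never use: $P_d$ has at most $l$ roots in $(0,4]$, those roots are \emph{simple} for a.e.\ $d$ (this is the null set $E_2$, via the discriminant), and a partial-fraction decomposition then shows that only the single grid point closest to each root contributes a term of size $\eta_N$, while all other terms sum to $O((N+1)^2)\le C\eta_N$. Your argument produces the right pointwise output $\eta_N|\lambda_{k,N}|\ge c$ but skips the mechanism that turns this into the required $\ell^\infty$ estimate.

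The fix is minor once identified: in addition to your Borel--Cantelli set, discard the null sets $\{Q_d(4)=0\}$ and $\{\operatorname{disc}Q_d=0\}$; then for a.e.\ $d$ your conclusion $\min_{k:\,x_k^N\ge x_0}\eta_N|Q_d(x_k^N)|\ge c'$ together with simplicity of the roots translates (via the mean value theorem, exactly the inclusion $E_3\cap E_2^c\subset E_3'$ in the paper) into the diophantine hypothesis (iv) of Theorem \ref{le_theoreme_diophantien}, and that theorem then delivers the $\ell^\infty$ relative stability. Without invoking simplicity and the finitely-many-roots structure, the reduction you wrote does not go through.
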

As for Bertrand series, there is no optimal choice of sequence $(\eta_N)_{N\in \mathbb{N}}$ that satisfy this condition and we can not directly  deduce stability in the sense of \eqref{def stability}, but we can choose
\[ \eta_N=((N+1)\log(N+1))^2= \left( \frac{\log h}h \right)^2.\]
As a consequence, we affirm that, up to some logarithmic corrections, almost all real symmetric formula generates stable schemes.

\medskip

We use this theorem to deduce a convergence result.  
\begin{propo}
\label{prop cvg alea}
With any given $d\in \mathscr{C}_{\mathbb{K},l}$, we can associate the scheme of Proposition \ref{le schema de base}, with $\mu=n-2$ if $\mathbb{K}=\mathbb{C}$ and $\mu=n$ if $\mathbb{K}=\mathbb{R}$, and the formula $s$ given by Proposition \ref{prop si on a d alors on a s}. Then, it follows from Theorem \ref{Lax} that for all $l\in \mathbb{N}$,
\begin{itemize}
\item for almost all $d\in  \mathscr{C}_{\mathbb{C},l}$, the associated scheme is convergent of order $n$.
\item for almost all $d\in  \mathscr{C}_{\mathbb{R},l}$, the associated scheme converges at the rate $h^n (\log(h))^2$.
\end{itemize}
\end{propo}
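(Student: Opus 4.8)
The plan is to read the statement as a direct corollary of the Lax theorem (Theorem \ref{Lax}), feeding it two ingredients that are already available: the consistency of the schemes of Proposition \ref{le schema de base}, valid for \emph{every} $d\in\mathscr{C}_{\mathbb{K},l}$, and the almost-sure stability of $(\mathbf{D}_N(d))_N$ from Theorem \ref{theo alea}. Fix the even order $n$ for which the companion formula $s$ is produced by Proposition \ref{prop si on a d alors on a s}, so that $(d,s)$ is consistent of order $n$ with $\tau(s)\le\frac n2-1$. The consistency of $(d,s)$ transfers verbatim to the central consistency of the scheme: inserting $f=-u''$, the residual $(\mathbf{D}_N\mathbf{u}^{N,ex}-h^2\mathbf{S}_N\mathbf{f}^{N,ex})_i$ equals $\sum_k d_k u(x^N_{i+k})+h^2\sum_k s_k u''(x^N_{i+k})$, which is $\mathcal O(h^{n+2})$. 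Hence the consistency half requires no new computation; it is exactly Proposition \ref{le schema de base} (equivalently Proposition \ref{prop the schemes are consistent}) for the two admissible values of $\mu$.

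In the complex case I would take $\mu=n-2$, so that Proposition \ref{le schema de base} gives a scheme consistent of order $n$ in the center and of order $n-2$ at a fixed distance $l$ of the boundary, in the sense of \eqref{def extend consistency}. By the first item of Theorem \ref{theo alea}, for almost all $d\in\mathscr{C}_{\mathbb{C},l}$ the sequence $(\mathbf{D}_N(d))_N$ is strongly stable \eqref{def strong stability}; the first item of Theorem \ref{Lax} then applies directly and yields convergence of order $n$.

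In the real case I would take $\mu=n$, so that Proposition \ref{le schema de base} (via Proposition \ref{prop the schemes are consistent}) produces a scheme consistent of order $n$ in the full sense \eqref{def consistency}. The second item of Theorem \ref{theo alea} only grants, for almost all $d\in\mathscr{C}_{\mathbb{R},l}$, stability relative to any $(\eta_N)_N$ with $\sum_N (N+1)/\eta_N<\infty$ and $\sup_N (N+1)^2/\eta_N<\infty$, so the remaining work is to select such an $\eta_N$ and to run the second, quantitative item of the Lax theorem. I would pick $\eta_N=((N+1)\log(N+1))^2$ and check: $\sum_N (N+1)/\eta_N=\sum_N 1/((N+1)(\log(N+1))^2)<\infty$ is a convergent Bertrand series, and $\sup_N (N+1)^2/\eta_N=\sup_N (\log(N+1))^{-2}<\infty$. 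Since moreover $\eta_N h^{n+2}=(\log(N+1))^2(N+1)^{-n}\to0$, the second item of Theorem \ref{Lax} gives convergence at the rate $\epsilon_N=\eta_N h^{n+2}$; rewriting with $h=(N+1)^{-1}$ turns this into $h^{-2}(\log h)^2\,h^{n+2}=h^n(\log h)^2$, the announced rate.

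The genuine difficulty — the small-denominator and diophantine analysis underlying the almost-sure stability — is entirely absorbed into Theorem \ref{theo alea}. Within the present argument the only delicate step is the real case: one must hand-pick $\eta_N$ at the boundary of the admissible range (no optimal choice exists, just as for Bertrand series), verify the two conditions together with $\eta_N h^{n+2}\to0$, and simplify the resulting rate. The remaining care points are routine bookkeeping: that the central consistency of $(d,s)$ is inherited with the correct boundary order for each $\mu$, and that the degenerate formulas (for instance those with $\sum_j d_j j^2=0$, for which the scheme no longer approximates a second-order operator) already belong to the measure-zero exceptional set of Theorem \ref{theo alea} and may thus be discarded.
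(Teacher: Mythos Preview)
Your proposal is correct and follows exactly the approach the paper intends: the proposition is stated as an immediate corollary of Theorem \ref{Lax} fed by the consistency of Proposition \ref{le schema de base} and the almost-sure stability of Theorem \ref{theo alea}, and the paper even singles out the same choice $\eta_N=((N+1)\log(N+1))^2$ just before the statement. There is no separate proof in the paper beyond this, so your write-up matches it in both strategy and detail.
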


\medskip

In the proof of Theorem \ref{theo alea} for real formulas, the logarithmic correction is due to the use of a diophantine control of some resonances. Experimentally, we can indeed evidence these quasi-resonances by plotting convergence curves for various schemes of Proposition \ref{prop cvg alea} for randomly drawn formulas $d$. Two typical kinds of behaviors for the convergence curves can be observed (see Figures \ref{convergence_curves_ordre_2} and \ref{convergence_curves_ordre_4} below). 
For random $d$, either we observe classical convergence curves (which are close to straight lines and correspond to non-resonant situations), or we obtain strange curves with a complex behaviour corresponding to close to resonant situations. 
\begin {figure}[!ht]
	\begin{center}
		\includegraphics[width=12cm]{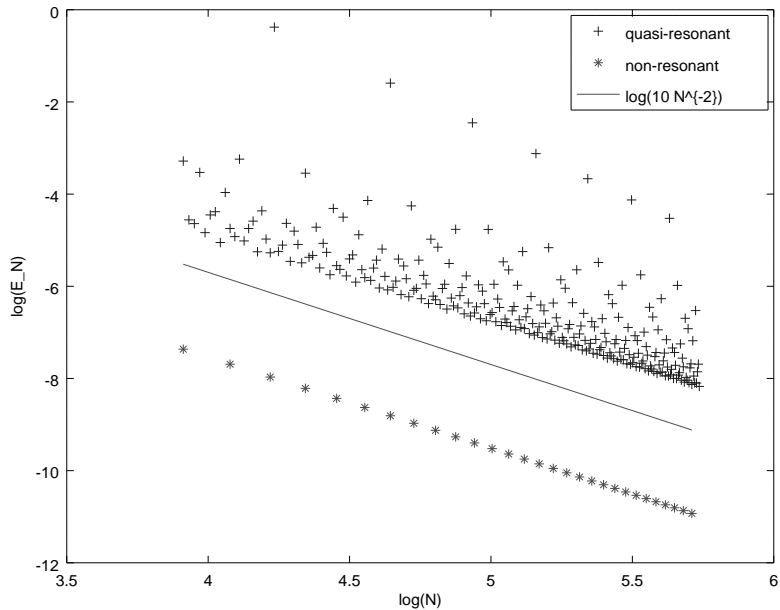}
	\end{center}
	\caption{\label{convergence_curves_ordre_2} Convergence curves with $u(x)=x(1-x)e^{2x}$ , $n=2$ and $E_N:=\|\mathbf{u}^N-\mathbf{u}^{N,ex}\|_{\infty}$. For the non-resonant scheme $d=2\mathbb{1}_{\{ 0 \} }-\mathbb{1}_{\{-1,1\}}$ and for the quasi-resonant scheme $d=(2-6z) \mathbb{1}_{\{ 0 \} }+(4z-1)\mathbb{1}_{\{-1,1\}}-z \mathbb{1}_{\{-2,2\}}$ with $z= 0.358946420670826$.}
\end {figure}

\begin {figure}[!ht]
	\begin{center}
		\includegraphics[width=12cm]{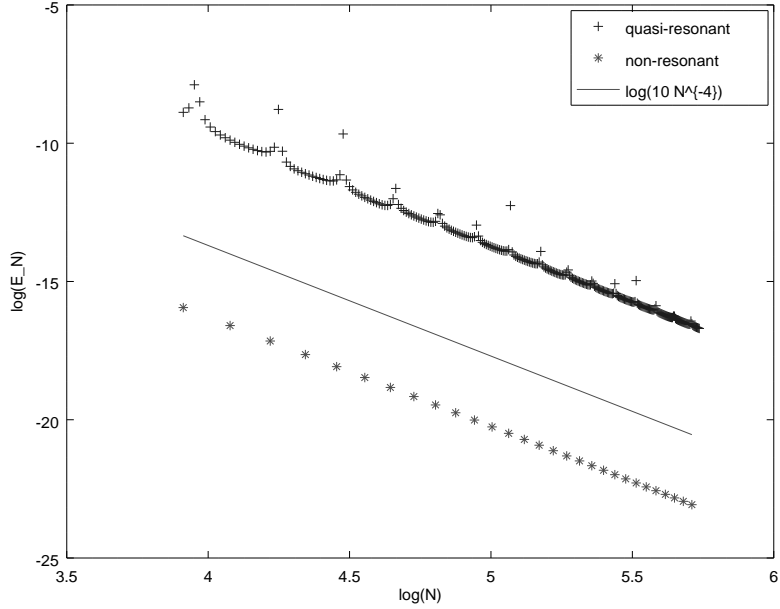}
	\end{center}
	\caption{\label{convergence_curves_ordre_4} Convergence curves with $u(x)=x(1-x)e^{2x}$ , $n=4$ and $E_N:=\|\mathbf{u}^N-\mathbf{u}^{N,ex}\|_{\infty}$. For the non-resonant scheme $d=2\mathbb{1}_{\{ 0 \} }-\mathbb{1}_{\{-1,1\}}$ and for the quasi-resonant scheme $d=(2-6z) \mathbb{1}_{\{ 0 \} }+(4z-1)\mathbb{1}_{\{-1,1\}}-z \mathbb{1}_{\{-2,2\}}$ with $z= 32.12121212$.}
\end {figure}

\section{Polynomials and high order formulas}
The aim of this section is two fold. First, to explain why the matrices $\mathbf{D}_N(d)$ constructed in Proposition \ref{le schema de base} are polynomials in $d$. Second, to give criteria of consistency on these polynomials to interpret Theorem \ref{theo les formules optimales} as a classical problem of Pad\'e approximant.

\medskip

To highlight the algebraic structure of the matrices $\mathbf{D}_N(d)$ of Proposition \ref{le schema de base} we will now give a more formal definition of these matrices. 

\medskip

Let $d \in \mathscr{S}_{\mathbb{C}}$ be a symmetric formula. 
We introduce $T_d\in \mathscr{L}(\mathbb{C}^\mathbb{Z})$, the operator of convolution by $d$,
\begin{equation}
\label{def convolution operator}
 \forall w\in \mathbb{C}^\mathbb{Z}, \ T_d(w)= d\star w = \left( \sum_{j\in \mathbb{Z}} d_j w_{i-j} \right)_{i\in \mathbb{Z}}.
\end{equation}
Let $N\in \mathbb{N}^*$  and $\mathscr{E}_N$ be the space of the odd functions from $\mathbb{Z}$ to $\mathbb{C}$ that are odd in $0$ and in $N+1$
\begin{equation}
\label{espace_fonctions_periodiques_impaires}
\mathscr{E}_N := \{ w\in \mathbb{C}^{\mathbb{Z}} \ | \ \forall j \in \mathbb{Z}, \ w_{N+1+j}=-w_{N+1-j} \textrm{ and } w_{-j}=-w_{j} \}.
\end{equation}
Let $\mathscr{B}_N$ be the canonical basis of $\mathscr{E}_N$
\begin{equation}
\label{la base canonique}
\mathscr{B}_N = (\mathbb{1}_{j+(2N+2)\mathbb{Z}}-\mathbb{1}_{-j+(2N+2)\mathbb{Z}})_{j=1,\dots, N}.
\end{equation}
Since $d$ is symmetric, we verify that $\mathscr{E}_N $ is stable by $T_d$.
\begin{defi} 
\label{def formal D_N}
 With the previous construction, we define $\mathbf{D}_N(d)$ through the relation
\begin{equation*}
\mathbf{D}_N(d) = \matr_{\mathscr{B}_N} T_{d | \ \mathscr{E}_N}. 
\end{equation*}
\end{defi}
\begin{rem}
Of course, this definition of $\mathbf{D}_N(d)$ gives the same matrices, when $N$ is large enough, as the matrix of Proposition \ref{le schema de base}.
\end{rem}

\medskip

The space of symmetric formulas has a structure of free module on the ring of polynomial that is very useful to write efficient and accurate high order schemes. More precisely, we equip the set of formula $\mathbb{C}^{(\mathbb{Z})}$ of its structure of commutative algebra for the convolution
\[ \forall d,s\in  \mathbb{C}^{(\mathbb{Z})}, \ d\star s = \left( \sum_{j\in \mathbb{Z}} d_j s_{i-j} \right)_{i\in \mathbb{Z}}.\]
Then, if $R$ is a ring such that $\mathbb{Z}\subset R\subset \mathbb{C}$, we consider $\mathscr{S}_{R}$ as a subalgebra of $\mathbb{C}^{(\mathbb{Z})}$.

\medskip

On the one hand, this structure explains, through the following lemma, the importance of the formula $a$ (defined in \eqref{definition_formule_du_prince} by $a= 2\mathbb{1}_{\{0\}} - \mathbb{1}_{\{-1,1\}}$).
\begin{lem} 
\label{lemme toute formule est un polynome du prince}
If $R$ is a ring such that $\mathbb{Z}\subset R\subset \mathbb{C}$ then $\mathscr{S}_{R}$ is a free $R[X]$ module whose $a$ is a basis
\[ \forall d\in  \mathscr{S}_{R},\ \exists \ ! \ P \in R[X], \ d=P(a).\]
Furthermore, if  $P \in \mathbb{C}[X]$ then 
\[ \tau(P(a)) = \deg P. \]
\end{lem}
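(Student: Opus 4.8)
The plan is to show that the evaluation map $\phi : R[X] \to \mathscr{S}_R$, $P \mapsto P(a)$ (where powers are taken for the convolution product $\star$, with $a^{\star 0} = \mathbb{1}_{\{0\}}$) is a bijection: its bijectivity is exactly the asserted existence and uniqueness of $P$, and it realizes $\mathscr{S}_R$ as a free $R[X]$-module of rank one. First I would record two preliminary facts. On the one hand, $\mathscr{S}_R$ is stable under $\star$ (the convolution of two symmetric $R$-valued formulas is again symmetric and $R$-valued), so that $\phi$ is a well-defined homomorphism of $R$-algebras. On the other hand, for any nonzero $f,g \in \mathbb{C}^{(\mathbb{Z})}$ one has $\tau(f \star g) = \tau(f) + \tau(g)$, with leading coefficient $(f\star g)_{\tau(f)+\tau(g)} = f_{\tau(f)} g_{\tau(g)}$; this uses only that supports add under convolution and that $\mathbb{C}$, hence every subring $R$, has no zero divisors.

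The engine of the proof is the behaviour of the convolution powers of $a$. Since $\tau(a) = 1$ with leading coefficient $a_1 = -1$, multiplicativity of $\tau$ gives by induction $\tau(a^{\star k}) = k$ with leading coefficient $(-1)^k$, and $a^{\star k}$ is supported in $\llbracket -k, k \rrbracket$. From this the degree formula and injectivity follow at once: if $P = \sum_{k=0}^{n} c_k X^k$ with $c_n \neq 0$, then only $a^{\star n}$ contributes to the index $n$ of $P(a)$, so $(P(a))_n = (-1)^n c_n \neq 0$ while all indices $> n$ vanish; hence $\tau(P(a)) = n = \deg P$, which proves the ``furthermore'' clause and in particular $P(a) = 0 \Rightarrow P = 0$, i.e.\ the uniqueness of $P$.

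For existence I would argue by induction on the stencil $\tau(d)$. The cases $d = 0$ (take $P = 0$) and $\tau(d) = 0$ (take the constant $P = d_0$) are immediate. If $\tau(d) = n \geq 1$, set $c = (-1)^n d_n$, which lies in $R$ because $(-1)^n \in \mathbb{Z} \subset R$; then $c\, a^{\star n}$ is a symmetric $R$-valued formula with leading coefficient $d_n$ at index $n$, so $d - c\, a^{\star n} \in \mathscr{S}_R$ has stencil strictly smaller than $n$. By the induction hypothesis $d - c\, a^{\star n} = Q(a)$ for some $Q \in R[X]$, whence $d = (Q + cX^n)(a)$ and $\phi$ is surjective.

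Combining the three steps, $\phi$ is a bijective $R$-algebra isomorphism $R[X] \xrightarrow{\sim} \mathscr{S}_R$; transporting the $R[X]$-module structure along $\phi$ exhibits $\mathscr{S}_R$ as the asserted free module, and the uniqueness and degree statements are precisely the clauses of the lemma. I do not expect a genuine obstacle here: the one point requiring care is keeping all coefficients inside $R$ throughout the induction, which works only because $(-1)^n \in R$ and $R$, being a subring of $\mathbb{C}$, is an integral domain, so that leading coefficients never vanish.
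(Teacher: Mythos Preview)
Your proof is correct and follows essentially the same approach as the paper: both identify that the leading coefficient $(a^{\star n})_n = (-1)^n$ is a unit in $R$ and then conclude by induction on the stencil size. The only cosmetic difference is that the paper obtains this leading coefficient via an explicit binomial expansion of $a^{\star n}$ (writing $a = -(\mathbb{1}_{\{1/2\}} - \mathbb{1}_{\{-1/2\}})^{\star 2}$ in $\mathbb{C}^{(\mathbb{Z}/2)}$), whereas you extract it more directly from the multiplicativity $\tau(f\star g)=\tau(f)+\tau(g)$ of leading terms.
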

\begin{proof}
If we consider $\mathbb{C}^{(\mathbb{Z})}$ as a subalgebra of $\mathbb{C}^{\left( \frac{\mathbb{Z}}2 \right)}$ then we remark that
\[ a = -\left( \mathbb{1}_{\left\{\frac12 \right\}} -\mathbb{1}_{\left\{-\frac12\right\}}  \right)^{\star 2}.\]
Consequently, a binomial expansion gives
\[ \forall n\in \mathbb{N}, \ a^{\star n} = (-1)^n \left( \mathbb{1}_{\left\{\frac12 \right\}} -\mathbb{1}_{\left\{-\frac12\right\}}  \right)^{\star 2n}  =\sum_{k=0}^{n} \frac{(2n)!}{(n+k)!(n-k)!} (-1)^{k} \mathbb{1}_{\{k,-k\}}.\]
The second point of the lemma is clearly a consequence of this expansion. Furthermore, since the term associated to the highest index of $a^{\star n}$ (i.e. $(-1)^n$) is invertible in $R$, the first point follows from an induction.
\end{proof}
On the other hand, this structure explains, why the matrices $\mathbf{D}_{N}(d)$ are polynomials in $\mathbf{A}_{N}$.
\begin{lem}
\label{lemme D_N est un morphisme de module}
For all $N\in \mathbb{N}^*$, $d\mapsto \mathbf{D}_N(d)$ is a $\mathbb{C}[X]$ module morphism:
\[ \forall P\in \mathbb{C}[X], \forall d\in \mathscr{S}_{\mathbb{C}}, \  \mathbf{D}_N(P(d))= P(\mathbf{D}_N(d)). \]
\end{lem}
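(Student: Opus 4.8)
The goal is to prove that for each fixed $N$, the map $d \mapsto \mathbf{D}_N(d)$ commutes with polynomial substitution, i.e.\ $\mathbf{D}_N(P(d)) = P(\mathbf{D}_N(d))$ for every $P \in \mathbb{C}[X]$ and every symmetric formula $d$. Since $\mathbf{D}_N(d)$ is by Definition \ref{def formal D_N} the matrix of the restricted convolution operator $T_{d \,|\, \mathscr{E}_N}$ in the canonical basis $\mathscr{B}_N$, the plan is to transfer everything to the level of the convolution operators $T_d$ and exploit the fact that $d \mapsto T_d$ is an algebra morphism from $(\mathscr{S}_{\mathbb{C}}, \star)$ to $(\mathscr{L}(\mathbb{C}^{\mathbb{Z}}), \circ)$.

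First I would establish the two structural facts about $T$. The map $d \mapsto T_d$ is $\mathbb{C}$-linear, and it is multiplicative for convolution: $T_{d \star e} = T_d \circ T_e$, which is just the associativity of convolution written operator-wise. Together with $T_{\mathbb{1}_{\{0\}}} = \mathrm{Id}$, this shows $d \mapsto T_d$ is an algebra morphism, hence $T_{P(d)} = P(T_d)$ for any polynomial $P$ (here $P(d)$ means the convolution-power polynomial in $d$, and $P(T_d)$ the ordinary operator polynomial). Next I would note that, since $d$ is symmetric, the subspace $\mathscr{E}_N$ is stable under $T_d$ (this is already asserted in the text just before Definition \ref{def formal D_N}); moreover $\mathscr{E}_N$ is stable under $T_{P(d)} = P(T_d)$ as well, being invariant under $T_d$ and under the identity. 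Consequently the restriction map $S \mapsto S_{|\,\mathscr{E}_N}$ is a well-defined algebra morphism on the subalgebra of operators preserving $\mathscr{E}_N$, so $(P(T_d))_{|\,\mathscr{E}_N} = P(T_{d\,|\,\mathscr{E}_N})$.

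The last step is to pass from operators on $\mathscr{E}_N$ to their matrices in $\mathscr{B}_N$. The coordinate map $\matr_{\mathscr{B}_N}$ is an algebra isomorphism from $\mathscr{L}(\mathscr{E}_N)$ to $\mathscr{L}(\mathbb{C}^N)$ (it is linear and sends composition to matrix product), so it commutes with polynomial substitution. Chaining these facts gives
\[
\mathbf{D}_N(P(d)) = \matr_{\mathscr{B}_N}\bigl(T_{P(d)\,|\,\mathscr{E}_N}\bigr) = \matr_{\mathscr{B}_N}\bigl(P(T_{d\,|\,\mathscr{E}_N})\bigr) = P\bigl(\matr_{\mathscr{B}_N} T_{d\,|\,\mathscr{E}_N}\bigr) = P(\mathbf{D}_N(d)),
\]
which is exactly the claim. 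Because each map in the chain is linear and multiplicative, it suffices to verify the identity on monomials $P = X^k$ and extend by linearity, so I would phrase the argument as a single remark that $\mathbf{D}_N = \matr_{\mathscr{B}_N} \circ (\cdot)_{|\,\mathscr{E}_N} \circ T_{(\cdot)}$ is a composition of three algebra morphisms restricted to $\mathscr{S}_{\mathbb{C}}$.

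There is no serious obstacle here; the only point requiring a little care is the bookkeeping between the two meanings of ``polynomial in $d$'': on the source side $P(d)$ is computed with the convolution product (consistent with the algebra structure on $\mathscr{S}_{\mathbb{C}}$ introduced in the excerpt), while on the target side $P(\mathbf{D}_N(d))$ is an ordinary matrix polynomial. The whole content of the lemma is precisely that the algebra morphism $d \mapsto \mathbf{D}_N(d)$ identifies these two, and the proof is the observation that it factors through $T_{(\cdot)}$, restriction, and $\matr_{\mathscr{B}_N}$, each of which is multiplicative.
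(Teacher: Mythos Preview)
Your proof is correct and follows exactly the same approach as the paper, which simply says the result ``follows directly of Definition \ref{def formal D_N} of $\mathbf{D}_N(d)$ and of the associativity of the convolution.'' You have merely unpacked this one-line argument by making explicit that $d \mapsto \mathbf{D}_N(d)$ factors as a composition of algebra morphisms (convolution operator, restriction to $\mathscr{E}_N$, matrix representation), which is precisely the content the paper leaves implicit.
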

\begin{proof}
It follows directly of  Definition \ref{def formal D_N} of $\mathbf{D}_N(d)$ and of the associativity of the convolution.
\end{proof}

%In this section, we discuss the choice of $P,Q\in \mathbb{C}[X]$ in the way to have a couple of formulas, 
%\[d=P(a),s=Q(a),\]
 %consistent of high order and such that the schemes they induce (as for example the scheme of Proposition \ref{le schema de base}) are the most efficient.

%In the first part of this section, we explain how to choose $P,Q\in \mathbb{C}[X]$ in the way to have a couple of formulas $d=P(a),s=Q(a)$ consistent of high order. In the second part, we explain what can be the optimal choice.

\subsection{Consistency for the polynomials}
We start with a lemma that we have used implicitly in the introduction (in Proposition \eqref{prop si on a d alors on a s} and Proposition \eqref{prop de D_N}).
\begin{lem}
\label{lem pour avoir la consistance, il suffit de s'exiter sur les polynomes}
Let $n\in 2\mathbb{N}$. Then a couple of formulas $(d,s)\in \mathscr{S}_{\mathbb{C}}^2 $ is consistent of order $n$ \eqref{def consistency formula} if and only if
\[ \forall p\in \mathbb{C}_{n+1}[X], \ \sum_{j\in \mathbb{Z}} d_j p(j) + s_j p''(j) =0. \]
\end{lem}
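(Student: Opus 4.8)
The plan is to prove the equivalence between consistency of order $n$ and the polynomial condition by reducing the general smooth-function statement to a statement about monomials via Taylor expansion. The key observation is that consistency of order $n$ is a \emph{linear} condition on the test function $u$, so it suffices to verify it on a spanning family, and the natural family here is the monomials $p(x)=x^k$.

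First I would prove the easy direction ($\Rightarrow$). If $(d,s)$ is consistent of order $n$ in the sense of \eqref{def consistency formula}, then for any fixed polynomial $p\in\mathbb{C}_{n+1}[X]$ we may take $u=p$ in \eqref{def consistency formula}. Because $p(x^N_j)=p(hj)$ and $p''(x^N_j)=p''(hj)$ are polynomials in $h$, the quantity $\sum_j d_j p(hj)+h^2 s_j p''(hj)$ is itself a polynomial in $h$ (a finite sum since $d,s$ have finite support). The consistency hypothesis forces this polynomial in $h$ to be $\mathcal{O}(h^{n+2})$. I would then extract the coefficient of each power of $h$: writing $p=\sum_{k=0}^{n+1} c_k x^k$, the coefficient of $h^k$ in the sum is $c_k\sum_j d_j j^k + c_k k(k-1)\sum_j s_j j^{k-2}$ (with the convention that the $s$-term is absent for $k<2$), and each such coefficient with $k\leq n+1$ must vanish. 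Summing these vanishing relations against the $c_k$ exactly yields $\sum_j d_j p(j)+s_j p''(j)=0$, after noting that $p''(j)=\sum_k c_k k(k-1) j^{k-2}$; so evaluation at the integer points $j$ (i.e. at $h=1$) gives the claimed identity.

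For the converse ($\Leftarrow$), I would take an arbitrary $u\in C^\infty(\mathbb{R})$ and apply a Taylor--Young expansion at $0$ up to order $n+1$: $u(x)=\sum_{k=0}^{n+1}\frac{u^{(k)}(0)}{k!}x^k + R(x)$ with $R(x)=o(x^{n+1})$, and more precisely $|R(x)|\leq C|x|^{n+2}$ on a bounded set together with the analogous bound for $u''$. Substituting $x=x^N_j=hj$ and using the finite support of $d,s$ (only finitely many $j$ contribute, all with $|hj|$ bounded by $\tau(d)h$ or $\tau(s)h$, hence $\to 0$), the remainder contributes $\mathcal{O}(h^{n+2})$. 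The polynomial part is a linear combination of the quantities $\sum_j d_j (hj)^k + h^2 s_j \frac{d^2}{dx^2}[x^k]_{x=hj}$; by homogeneity each equals $h^k\big(\sum_j d_j j^k + s_j\, k(k-1) j^{k-2}\big)$, and the hypothesis applied to the monomials $p=x^k$, $k\leq n+1$, makes every bracket vanish. Hence the whole expression is $\mathcal{O}(h^{n+2})$, which is precisely \eqref{def consistency formula}.

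The only subtle point, and the one I would be most careful about, is the bookkeeping that lets one pass freely between ``the identity holds at the grid scaling $h$'' and ``the identity holds at the integer points $j$.'' This rests entirely on the homogeneity $\sum_j d_j (hj)^k = h^k \sum_j d_j j^k$, which decouples the power of $h$ from the arithmetic sum over $j$; this is what makes the condition $h$-independent and reduces it to the single evaluation at the integers. I would also note that symmetry of $d,s$ is not needed for this lemma (odd monomials contribute zero sums automatically, but the argument does not rely on it), so I would state and prove it without invoking \eqref{def symetric complex}, keeping the monomial basis $\{1,x,\dots,x^{n+1}\}$ as the spanning family throughout.
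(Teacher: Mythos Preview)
Your proposal is correct and follows essentially the same approach as the paper: test on monomials $u(x)=x^k$, $k\leq n+1$, and use homogeneity to strip the powers of $h$ for the forward direction, and a Taylor expansion to order $n+1$ for the converse. The paper's proof is just a two-line sketch of exactly this argument, so your version is simply a more detailed write-up of the same idea.
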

\begin{proof}
It is enough to choose $u(x)=x^i$ with $i\leq n+1$ is the definition of the consistency and then to simplify the powers of "$h$". Conversely, it is enough to do a Taylor expansion.
\end{proof}

\medskip

In particular, if we choose $p=1$, we find that the consistency of order $n=0$ is nothing but the condition of zero mean \eqref{zero mean} for $d$.

\medskip

We introduce the formal Fourier transform $\mathscr{F}$ from the algebra of formulas $\mathbb{C}^{(\mathbb{Z})}$ to the algebra of formal series $\mathbb{C}\llbracket X \rrbracket$ defined by
\[ \mathscr{F}: \left\{ \begin{array}{llll} \mathbb{C}^{(\mathbb{Z})} & \to & \mathbb{C}\llbracket X \rrbracket\\
															d & \mapsto & \displaystyle \sum_{j\in \mathbb{Z}} d_j e^{ijX}.
\end{array}  \right. \]

\medskip

We give a characterization of consistency through this transform.
\begin{lem}
\label{lem consistency en mode Fourier}
Let $n\in 2\mathbb{N}$. A couple of formulas $(d,s)\in \mathscr{S}_{\mathbb{C}}^2 $ is consistent of order $n$ \eqref{def consistency formula} if and only if
\[  \mathscr{F}d = X^2 \mathscr{F}s \mod X^{n+2}.  \]
\end{lem}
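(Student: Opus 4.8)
The plan is to reduce the statement to an identification of Taylor coefficients, starting from the polynomial characterisation of consistency already obtained in Lemma \ref{lem pour avoir la consistance, il suffit de s'exiter sur les polynomes}. First I would expand both formal Fourier transforms as power series in $X$. Setting $D_k := \sum_{j\in\mathbb{Z}} d_j j^k$ and $S_k := \sum_{j\in\mathbb{Z}} s_j j^k$ for the moments of the two formulas, the exponential series yields
\[ \mathscr{F}d = \sum_{k\geq 0} \frac{i^k}{k!}\, D_k\, X^k \qquad\text{and}\qquad X^2\mathscr{F}s = \sum_{k\geq 0} \frac{i^k}{k!}\, S_k\, X^{k+2}. \]
Since $d$ and $s$ are symmetric, the odd moments $D_k$ and $S_k$ vanish, so both series contain only even powers of $X$; this is consistent with the fact that the asserted congruence constrains only even-order coefficients.

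Next I would read off the coefficient of $X^m$ on each side for $m\leq n+1$. On the left it equals $i^m D_m/m!$, and on the right it equals $i^{m-2} S_{m-2}/(m-2)!$ when $m\geq 2$ and $0$ when $m\in\{0,1\}$. Hence the congruence $\mathscr{F}d = X^2\mathscr{F}s \mod X^{n+2}$ is equivalent to the finite system of scalar identities obtained by equating these coefficients for $0\leq m\leq n+1$. Using $i^m = -i^{m-2}$, the equation for $2\leq m\leq n+1$ simplifies to
\[ D_m + m(m-1)\, S_{m-2} = 0, \]
while the equations for $m=0$ and $m=1$ read $D_0=0$ and $D_1=0$, the latter being automatic by symmetry.

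Finally I would match this system with Lemma \ref{lem pour avoir la consistance, il suffit de s'exiter sur les polynomes}. Since the monomials $(x^m)_{0\leq m\leq n+1}$ form a basis of $\mathbb{C}_{n+1}[X]$, the consistency condition of that lemma holds if and only if $\sum_{j} d_j p(j) + s_j p''(j)=0$ for each $p=x^m$ with $0\leq m\leq n+1$. For such $p$ one has $p(j)=j^m$ and $p''(j)=m(m-1)j^{m-2}$, so this condition reads exactly $D_m+m(m-1)S_{m-2}=0$ for $m\geq 2$ and $D_m=0$ for $m\in\{0,1\}$. These are precisely the identities produced by the Fourier congruence, which establishes the desired equivalence.

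The argument is entirely computational; the only points that require care are the index bookkeeping in the two power series, the sign coming from $i^2=-1$, and the remark that it suffices to test the previous lemma on monomials. I note that the whole statement can alternatively be obtained in one line by testing \eqref{def consistency formula} directly on the exponential $u(x)=e^{iXx}$: then $u''=-X^2u$ and, since $d$ and $s$ have finite support, $\sum_{j} d_j u(x^N_j) + h^2 s_j u''(x^N_j) = \mathscr{F}d(hX)-(hX)^2\mathscr{F}s(hX)$, so that consistency of order $n$ amounts to $\mathscr{F}d = X^2\mathscr{F}s \mod X^{n+2}$.
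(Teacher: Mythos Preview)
Your proof is correct. Both you and the paper reduce to Lemma \ref{lem pour avoir la consistance, il suffit de s'exiter sur les polynomes}, but the bridge is built differently: you expand $\mathscr{F}d$ and $X^2\mathscr{F}s$ as power series in $X$, identify their coefficients with the moments $D_m$ and $S_{m-2}$, and match the resulting identities $D_m+m(m-1)S_{m-2}=0$ with the monomial test $p=x^m$; the paper instead invokes the Taylor shift identity $p(x_0)=e^{x_0\partial_X}p(0)$ to write $\sum_j d_j p(j)+s_j p''(j)$ directly as $(\mathscr{F}d-X^2\mathscr{F}s)(i\partial_X)p\,(0)$, and reads off the vanishing of the first $n+2$ coefficients from there. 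Your route is more elementary and makes the moment conditions completely explicit, which fits well with how they were already used in Proposition \ref{prop si on a d alors on a s}; the paper's route is shorter and more conceptual, packaging the whole computation into a single operator identity. Your closing remark about testing \eqref{def consistency formula} on $u(x)=e^{iXx}$ is also a valid shortcut (that definition allows an arbitrary smooth $u$), and is in spirit the analytic counterpart of the paper's formal operator argument.
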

\begin{proof}
Let $\partial_X\in \mathscr{L}(\mathbb{C}[X])$ be the formal derivative on the space of the polynomials $\mathbb{C}[X]$. As a consequence, since the Taylor expansion in $0$ of a polynomial is exact, if $p\in \mathbb{C}[X]$ and $x_0\in \mathbb{C}$ then we have
\[ p(x_0) = e^{x_0 \partial_X}p(0).  \]
Consequently, following Lemma \eqref{lem pour avoir la consistance, il suffit de s'exiter sur les polynomes}, $(d,s)\in \mathscr{S}_{\mathbb{C}}^2 $ is consistent of order $n$ \eqref{def consistency formula} if and only if
\[ \forall p\in \mathbb{C}_{n+1}[X], \ \left( \mathscr{F}d - X^2 \mathscr{F}s \right)(i\partial_X)p(0)=0. \]
We conclude the proof by considering the lowest power in the expansion of $\mathscr{F}d - X^2 \mathscr{F}s$ in $0$.
\end{proof}

\medskip

The formal Fourier transform is as usual an algebra morphism. As a consequence, if $P\in \mathbb{C}[X]$ and $d=P(a)$ then
\begin{equation}
\label{lien fourier polynomes}
\sum_{k \in \mathbb{N}^*} \sum_{j\in \mathbb{Z}} d_j \frac{(-1)^k j^{2k}}{(2k)!} X^{2k} =  \mathscr{F}d = P(\mathscr{F}a) = P(2-2\cos(X)) = P\left( 4\sin^2\left( \frac{X}2 \right) \right).
\end{equation}
The consistency and the stability of the formulas often involve moment of $d$ or $s$. In particular, this relation provides simple expressions for the first moments of $d$ in function of $P$
\[  \sum_{j\in \mathbb{Z}} d_j = P(0) \textrm{ and }  \sum_{j\in \mathbb{Z}} d_j j^2 = - 2 P'(0).\]

\medskip

In fact, with the formula \eqref{lien fourier polynomes}, we get a criterion of consistency directly on the polynomials.
\begin{lem}
\label{lem consistency arcsin}
Let $P,Q\in \mathbb{C}[X]$ and $n\in 2\mathbb{N}^*$. The couple of symmetric formulas $(P(a),Q(a))$ is consistent of order $n$ \eqref{def consistency formula} if and only if 
\[  P(4 X^2) =4 \left( \arcsin(X) \right)^2 Q(4 X^2)  \mod X^{n+2},\]
where $\left( \arcsin(X) \right)^2$ is the square of the inverse sine function whose expansion is (for a reference, see, for example, \cite{MR2320777})
\[ \left( \arcsin(X) \right)^2 = \sum_{n\in \mathbb{N}^*} \frac{2^{2n-1} }{n^2 C_{2n}^n} X^{2n}.   \]
\end{lem}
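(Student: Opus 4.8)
The plan is to reduce the statement to the Fourier characterization of consistency already proved in Lemma \ref{lem consistency en mode Fourier}, and then to eliminate the trigonometric functions by a single formal change of variable. First I would combine Lemma \ref{lem consistency en mode Fourier} with the algebra-morphism identity \eqref{lien fourier polynomes}. Since $\mathscr{F}$ is an algebra morphism and $\mathscr{F}a = 4\sin^2(X/2)$, we get $\mathscr{F}(P(a)) = P(4\sin^2(X/2))$ and $\mathscr{F}(Q(a)) = Q(4\sin^2(X/2))$. Hence $(P(a),Q(a))$ is consistent of order $n$ if and only if
\[ P\!\left(4\sin^2\!\left(\tfrac{X}{2}\right)\right) \equiv X^2\, Q\!\left(4\sin^2\!\left(\tfrac{X}{2}\right)\right) \pmod{X^{n+2}}. \]

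Next I would introduce the formal substitution $\varphi(X)=2\arcsin(X)$, whose compositional inverse is $\varphi^{-1}(X)=\sin(X/2)$; both are odd formal power series vanishing at $0$ with nonzero linear coefficient ($\varphi'(0)=2$ and $(\varphi^{-1})'(0)=\tfrac12$). Substituting $X\mapsto\varphi(X)$ in the congruence above and using $\sin(\varphi(X)/2)=\sin(\arcsin X)=X$, together with $\varphi(X)^2=4(\arcsin X)^2$, transforms it verbatim into
\[ P(4X^2) \equiv 4(\arcsin X)^2\, Q(4X^2) \pmod{X^{n+2}}, \]
which is precisely the claimed relation. The reverse implication is obtained by applying the inverse substitution $X\mapsto\varphi^{-1}(X)=\sin(X/2)$, for which $\arcsin(\sin(X/2))=X/2$ as formal power series, recovering the trigonometric congruence.

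The only delicate point, and the step I would argue with care, is that this substitution is compatible with the truncation modulo $X^{n+2}$. This rests on the elementary fact that a formal power series $\varphi$ with $\varphi(0)=0$ and $\varphi'(0)\neq 0$ induces, via $g\mapsto g\circ\varphi$, an order-preserving automorphism of $\mathbb{C}\llbracket X \rrbracket$: writing $\varphi(X)=cX+O(X^2)$ with $c\neq 0$ gives $\varphi(X)^k=c^k X^k+O(X^{k+1})$, so the lowest-degree term of $g\circ\varphi$ has the same degree as that of $g$. Consequently $g\equiv 0 \pmod{X^{n+2}}$ if and only if $g\circ\varphi\equiv 0 \pmod{X^{n+2}}$, which yields both implications once applied to $\varphi$ and to $\varphi^{-1}$. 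I would finally remark that, since $d$ and $s$ are symmetric, both sides of each congruence are even in $X$, so only even powers occur and the truncation at the even order $n+2$ is unambiguous.
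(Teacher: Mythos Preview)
Your proof is correct and follows essentially the same approach as the paper: apply Lemma~\ref{lem consistency en mode Fourier} together with \eqref{lien fourier polynomes} to obtain the trigonometric congruence, then perform the change of variable $X\mapsto 2\arcsin(X)$. The paper's proof is terser and leaves implicit the point you spell out carefully, namely that composition with a formal series $\varphi$ satisfying $\varphi(0)=0$, $\varphi'(0)\neq 0$ preserves valuation and hence is compatible with truncation modulo $X^{n+2}$; your explicit treatment of this step (and of the reverse implication via $\varphi^{-1}$) is a welcome addition but does not constitute a different method.
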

\begin{proof}
If we apply \eqref{lien fourier polynomes} to the criterion of consistency of Lemma \ref{lem consistency en mode Fourier} then it comes
\[ P\left( 4\sin^2 \left( \frac{X}2 \right) \right) = X^2 Q\left( 4\sin^2 \left( \frac{X}2 \right) \right) \mod X^{n+2} . \]
To conclude the proof, it is enough to do the change of variable
\[ X\leftarrow 2 \arcsin(X). \]
\end{proof}

\subsection{The optimal case}
In order to prove Theorem \ref{theo les formules optimales} we are going to explain the link between the problem of optimization \eqref{le pb dopti} and the theory of Padé approximant. To see this link we introduce the usual valuation on $\mathbb{C}\llbracket X \rrbracket$:
\[ \forall C \in \mathbb{C}\llbracket X \rrbracket, \ {\rm val}(C) = \min \{k \in \mathbb{N} \ | \ \forall 0\leq j \leq k, \ C^{(j)}(0)=0 \}.  \]
As a consequence, with this formalism, Lemma \ref{lem consistency arcsin} can be written
\[\forall P,Q\in \mathbb{C}[X],\  \ord \left( P(a),Q(a) \right) = {\rm val} \left( P(4 X^2) - 4\left( \arcsin(X) \right)^2 Q(4 X^2) \right) -2. \]
However, Lemma \ref{lemme toute formule est un polynome du prince} proves that $(P,Q)\mapsto (P(a),Q(a))$ is a bijection from $\mathbb{C}_{l+1}[X]\times \mathbb{C}_{m}[X]$ to the space of the couples of symmetric formulas $(d,s)$ such that $\tau(d)\leq l+1$ and $\tau(s)\leq m$. As a consequence, the problem of optimization \eqref{le pb dopti} is equivalent to the following
 \[ \mathop{\max_{ (P,Q) \in \mathbb{C}[X]^2 \setminus \{(0,0)\}  }}_{\deg P \leq l+1,\ \deg Q \leq m} {\rm val} \left( P(4 X^2) - 4\left( \arcsin(X) \right)^2 Q(4 X^2) \right).\]
Since if $P(0)\neq 0$ then ${\rm val} \left( P(4 X^2) - 4\left( \arcsin(X) \right)^2 Q(4 X^2) \right)=0$, it is natural to study this problem of optimization for polynomials $P$ such that 
\[ P=XR \textrm{ where } R \in \mathbb{C}_{l}[X].\]
Consequently, it is enough to study the following problem of optimization
\begin{equation}
\label{le pb dopti Pade}
  \mathop{\max_{ (R,Q) \in \mathbb{C}[X]^2 \setminus \{(0,0)\}  }}_{\deg R \leq l,\ \deg Q \leq m} {\rm val} \left( R - C Q \right), 
\end{equation}
with (see \cite{MR2320777} for the expansion)
\begin{equation}
\label{arcsin_rescalle}
C(X) := 4 \left( \frac{\arcsin( \frac{\sqrt{X}}2)}{\sqrt X} \right)^2 = 2 \sum_{n\in \mathbb{N}} \frac{ X^{n} }{(n+1)^2 C_{2n+2}^{n+1}} . 
\end{equation}

\medskip
The theory of Padé approximants is a deep theory about approximation of formal series by rational ones. It has been extensively developed in the last decades (see  \cite{MR1383091}  or  \cite{MR513420} for an overview). Its aim is to give to each formal series $F\in \mathbb{C}\llbracket X \rrbracket$ a rational approximation $\frac{p_{l,m}}{q_{l,m}}$ (usually noted $[l/m]$) such that
\begin{equation}
\label{def Pade intuitif}
F = \frac{p_{l,m}}{q_{l,m}} \mod X^{l+m+1} \textrm{ , with } p_{l,m} \in \mathbb{C}_{l}[X] \textrm{ and }q_{l,m} \in \mathbb{C}_{m}[X].
\end{equation}
A natural way to find such an approximation is to try to solve
\begin{equation}
\label{def Pade usuel}
 p_{l,m} = Fq_{l,m}  \mod X^{l+m+1}  \textrm{ , with } p_{l,m} \in \mathbb{C}_{l}[X] \textrm{ and }q_{l,m} \in \mathbb{C}_{m}[X].
\end{equation}
Indeed, if we get a solution $(p_{l,m},q_{l,m})$ of \eqref{def Pade usuel} with $q_{l,m}(0)\neq 0$ then it is also a solution of \eqref{def Pade intuitif}. The second formulation \eqref{def Pade usuel} is interesting because it is a linear system of $l+m+1$ equations and $l+m+2$ unknowns. Consequently, it admits at least one non trivial solution. However, the question of its uniqueness (up to multiplication by a scalar) is generaly non trivial. In the classical Padé theory, if for a formal series $F$, the linear system \eqref{def Pade usuel} admits for all $l,m\in \mathbb{N}$, a unique non trivial solution (up to multiplication by a scalar), then it is said that the Padé table of $F$ is {\em normal}. Furthermore, if $F(0)\neq 0$ and if its Padé table is normal then a non trivial solution $(p_{l,m},q_{l,m})$ of \eqref{def Pade usuel} satisfies $\deg p_{l,m}=l$, $\deg q_{l,m}=m$, $q_{l,m}(0)\neq 0$ and ${\rm val} ( p_{l,m} - Fq_{l,m})=l+m+1$ (see  \cite{MR1383091}  or  \cite{MR513420} for details).

\medskip

What is crucial for us is that the Pad\'e table of $C$ is normal. In fact, D. Karp and E. Prilepkina have proved in \cite{KARP2012348} that the Padé tables of many generalized hypergeometric functions are normals. To see that the Padé table of $C$ is normal, we just have to verify that $C$ is one of those generalized hypergeometric functions. The generalized hypergeometric functions are the formal series defined by
\begin{equation}
\label{hendrix}
\pFq{p}{q}{\alpha_1, \dots, \alpha_p}{\beta_1, \dots , \beta_q}{X} = \sum_{k\in \mathbb{N}} \frac{(\alpha_1)_k\dots (\alpha_p)_k}{(\beta)_1 \dots (\beta_q)_k} \frac{X^k}{k!} \textrm{ with } (\gamma)_k = \prod_{j=0}^{k-1} \gamma +j.   
\end{equation}
D. Karp and E. Prilepkina have proved in Theorem $9$ of \cite{KARP2012348} that if 
\[ \left\{ \begin{array}{lll} p=q+1, \\
								  0<\alpha_{q+1}\leq 1,\\
								   0<\alpha_1 \leq \dots \leq \alpha_q,\\
								   0<\beta_1 \leq \dots \leq \beta_q,\\
								   \forall k\in \llbracket 1,q \rrbracket, \ \displaystyle \sum_{j=1}^k \alpha_j \leq \sum_{j=1}^k \beta_j
\end{array}   \right. \]
then the Pad\'e table of $\pFq{p}{q}{\alpha_1, \dots, \alpha_p}{\beta_1, \dots , \beta_q}{X}$ is normal. However, $C$ is one of those generalized hypergeometric functions because
\begin{equation}
\label{C est une fonction hypergeometrique}
C(X) =  \pFq{3}{2}{1,1,1}{\frac32,2}{\frac{X}4}.
\end{equation}
We verify this assertion by the following elementary calculation
\[ \frac{(n+1)^2 C_{2n+2}^{n+1}}{(n+2)^2 C_{2n+4}^{n+2}} = \frac{ (n+1)^2  }{ (2n+3)(2n+4) } = \frac14 \frac{ (n+1)^2 }{(n+\frac32)(n+2)}, \]
which shows by induction that the coefficients of $C(X)$ (see \eqref{arcsin_rescalle}) coincide with those of one of the generalized hypergeometric functions in \eqref{C est une fonction hypergeometrique}, see \eqref{hendrix}. 
\medskip

Now, we just have to link these results of Pad\'e approximation with our optimization problem \eqref{le pb dopti}. But if we denote by $(R_{l,m},Q_{l,m})$ the solution of \eqref{def Pade usuel} such that $R_{l,m}(0)=1$, then we have 
\[ {\rm val}(R_{l,m} - C RQ_{l,m})=l+m+1.\]
Conversely, if $(R,Q)$ satisfyies ${\rm val}(R - C RQ)\geq l+m+1$ with $\deg R\leq l$ and $\deg Q\leq m$ then it is a solution of \eqref{def Pade usuel}. But since the Pad\'e table of $C$ is normal, $(R,Q)$ is equal to $(R_{l,m},Q_{l,m})$, up to multiplication by a scalar. 

\medskip

Consequently, we have proved that the numerator and the denominator of the Pad\'e approximant of $C$ are the solutions to the optimization problem \eqref{le pb dopti Pade}, up to multiplication by a scalar. All the results of this analysis is summarized in the following theorem that is nothing but a  version of Theorem \ref{theo les formules optimales} with polynomials.
\begin{theo}
 \label{theo les poly optimaux}
For all $l,m\in \mathbb{N}$, there exists a couple of rational polynomial $(R_{l,m},Q_{l,m})\in \mathbb{Q}[X]^2$ such that
\[ \left\{  \begin{array}{llll} \deg R_{l,m}=l, \\
									\deg Q_{l,m}=m,\\
									R_{l,m}(0)=1.
\end{array} \right. \]
Moreover $(R_{l,m},Q_{l,m})$ is solution of the optimization problem
\[ \mathop{\max_{ (R,Q) \in \mathbb{C}[X]^2 \setminus \{(0,0)\}  }}_{\deg R \leq l,\ \deg Q \leq m} {\rm val} \left( R - C Q \right) =  {\rm val} \left( R_{l,m} - C Q_{l,m} \right)= l+m+1.\]
Furthermore, this solution is essentially unique: if $(R,S)\in \mathbb{C}[X]^2 $ is such that $\deg R\leq l$, $\deg Q\leq m$ and $ {\rm val} \left( R_{l,m} - C Q_{l,m} \right)= l+m+1$ then there exists $\lambda\in \mathbb{C}$ such that $R=\lambda R_{l,m}$ and $Q=\lambda Q_{l,m}$.
\end{theo}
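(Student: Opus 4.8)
The plan is to assemble the results established in the preceding discussion, which reduce the statement to an application of the normality of the Padé table of $C$. First I would record that $C(0) = 1 \neq 0$, since the constant term in \eqref{arcsin_rescalle} is $2/(1^2 C_2^1) = 1$. Combined with the identification \eqref{C est une fonction hypergeometrique} and the theorem of Karp and Prilepkina recalled above \cite{KARP2012348}, this guarantees that the Padé table of $C$ is normal and that every nontrivial solution $(p_{l,m}, q_{l,m})$ of the linear system \eqref{def Pade usuel} with $F = C$ satisfies $\deg p_{l,m} = l$, $\deg q_{l,m} = m$, $q_{l,m}(0) \neq 0$ and ${\rm val}(p_{l,m} - C q_{l,m}) = l+m+1$.

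Next I would normalize and check rationality. Evaluating \eqref{def Pade usuel} at $X = 0$ gives $p_{l,m}(0) = C(0) q_{l,m}(0) = q_{l,m}(0) \neq 0$, so I may set $R_{l,m} := p_{l,m}/p_{l,m}(0)$ and $Q_{l,m} := q_{l,m}/p_{l,m}(0)$; this yields $R_{l,m}(0) = 1$ while preserving the degrees and the valuation. Rationality follows because $C$ has rational coefficients, so \eqref{def Pade usuel} is a linear system with rational coefficients whose solution space is, by normality, one dimensional; it therefore contains a rational representative, and the normalization $R_{l,m}(0)=1$ keeps the pair in $\mathbb{Q}[X]^2$.

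Finally I would deduce optimality and essential uniqueness at once, both being immediate from normality. The equality ${\rm val}(R_{l,m} - C Q_{l,m}) = l+m+1$ shows that the supremum in \eqref{le pb dopti Pade} is at least $l+m+1$. Conversely, any pair $(R,Q)$ with $\deg R \leq l$, $\deg Q \leq m$ and ${\rm val}(R - CQ) \geq l+m+1$ satisfies $R = CQ \bmod X^{l+m+1}$, hence is a nontrivial solution of \eqref{def Pade usuel}; normality then forces $(R,Q) = \lambda (R_{l,m}, Q_{l,m})$ for some $\lambda \in \mathbb{C}$, so that ${\rm val}(R - CQ) = l+m+1$ exactly. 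This simultaneously identifies the maximal value as $l+m+1$ and gives the uniqueness clause. The only genuinely nontrivial ingredient, namely that $C$ falls within the hypergeometric family whose Padé tables are provably normal, has already been secured in the discussion preceding the statement; the remainder is careful bookkeeping of degrees and rationality through the normalization.
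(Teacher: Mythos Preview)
Your proposal is correct and follows essentially the same approach as the paper: both arguments rest on identifying $C$ as the hypergeometric series \eqref{C est une fonction hypergeometrique}, invoking Karp--Prilepkina to obtain normality of its Pad\'e table, and then reading off existence, exact degrees, the valuation $l+m+1$, and uniqueness from standard Pad\'e theory. You make a few points explicit that the paper leaves to the reader---the computation $C(0)=1$, the normalization giving $R_{l,m}(0)=1$, and the rationality argument via the one-dimensional rational solution space of \eqref{def Pade usuel}---but the substance is identical.
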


\medskip

There exists many very efficient methods to compute effectively Padé approximants (see for example \cite{MR1383091} or \cite{MR513420}). Consequently, if the order of consistency is large enough, it is interesting to not compute the optimal formulas of Theorem \ref{theo les formules optimales} through the resolution of the linear system \eqref{linear system for optimal formulas}, but to compute them from the optimal polynomials of Theorem \ref{theo les poly optimaux} through the relations
\begin{equation}
\label{lien formules polynomes optimaux}
 s^{l,m}=Q_{l,m}(a) \textrm{ and } d^{l,m}=P_{l,m}(a) \textrm{ with } P_{l,m}(X)=XR_{l,m}(X).
\end{equation}

\section{Stability}
In this section we study criteria of stability for the sequences of matrices of the form $P(\mathbf{A}_N)$ with $P$ a polynomial. These conditions hold on the polynomial $P$. As a consequence, if we want to apply one of these criteria to a matrix of the form $\mathbf{D}_N(d)$, with $d$ a symmetric formula, we have to solve  $P(a)=d$ (see  Lemma \ref{lemme toute formule est un polynome du prince} for details).

\medskip

In the first part, we give a criterion of strong stability \eqref{def strong stability} and then we deduce Theorem \ref{theo les schemas optimaux sont stables} and the first part of Theorem \ref{theo alea} (when the formulas are complex). In the second part, we give a diophantine criterion of relative stability \eqref{def relative stability} that is enough to prove the second part of Theorem \ref{theo alea} (when the formulas are real).

\subsection{Strong stability}
\begin{theo}
\label{theorem_positive_case} 
Let $P\in \mathbb{C}[X]$ be a polynomial such that 
\begin{equation}
\label{assumption_theorem_positive}
P(0)=0, \ P'(0)\neq 0  \textrm{ and }  \forall x\in ]0,4], \ P(x)\neq 0.
\end{equation}
Then the sequence of matrices $(P(\mathbf{A}_N))_{N \in \mathbb{N}^*}$ is strongly stable \eqref{def strong stability}.
\end{theo}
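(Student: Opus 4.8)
The plan is to exploit the spectral decomposition \eqref{scarlatti}, which tells us that the eigenvalues of $P(\mathbf{A}_N)$ are exactly $P\left(4\sin^2\left(\frac{\pi}{2}kh\right)\right)$ for $k=1,\dots,N$. Writing $\lambda_k^N := 4\sin^2\left(\frac{\pi}{2}kh\right)$, these eigenvalues range over points in $]0,4]$, so the hypothesis \eqref{assumption_theorem_positive} guarantees that $P$ never vanishes on them, hence $P(\mathbf{A}_N)$ is invertible for every $N$. The difficulty of strong stability, however, is that invertibility and even a uniform lower bound on $|P(\lambda_k^N)|$ are not enough: the estimate \eqref{def strong stability} is in the $\ell^\infty$ norm, not the $\ell^2$ norm, and it treats the boundary indices $j \leq l$ and $j \geq N+1-l$ differently from the interior. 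So the spectral picture must be converted into a pointwise (Green's-function-type) bound on the inverse.

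First I would reduce to understanding $\mathbf{v} = \mathbf{D}_N(d)^{-1}\mathbf{w}$ for given right-hand side $\mathbf{w}$, and expand both in the eigenbasis $(\mathbf{e}_k^N)_{k=1,\dots,N}$ from \eqref{spectral_decompostion}, where $(\mathbf{e}_k^N)_j = \sin(\pi k h j)$. Since $\mathbf{D}_N(d)\mathbf{e}_k^N = P(\lambda_k^N)\mathbf{e}_k^N$, the solution is $\mathbf{v}_j = \sum_k \frac{\hat{w}_k}{P(\lambda_k^N)}\sin(\pi k h j)$ after normalizing the (orthogonal) sine basis. The task is then to bound $\|\mathbf{v}\|_\infty$ by the weighted sup-norm of $\mathbf{w}$ appearing on the right of \eqref{def strong stability}, where the near-boundary components of $\mathbf{w}$ are allowed to be of order $h^{-2}$ times larger than interior ones. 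The natural route is to write the inverse as an explicit kernel $G^N_{ij} = \frac{2}{N+1}\sum_k \frac{\sin(\pi k h i)\sin(\pi k h j)}{P(\lambda_k^N)}$ and to prove discrete analogues of the bounds satisfied by the continuous Green's function of $-\frac{d^2}{dx^2}$, namely that $\sum_j |G^N_{ij}|$ and the boundary-weighted sums are uniformly bounded.

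The key analytic input is the behavior of $P$ near $0$: the conditions $P(0)=0$, $P'(0)\neq 0$ ensure $P(x) \sim P'(0)\,x$ as $x\to 0$, so for the small eigenvalues (small $k$) we have $|P(\lambda_k^N)| \gtrsim \lambda_k^N \gtrsim (kh)^2$, mirroring the continuous symbol $\xi^2$. For eigenvalues bounded away from $0$, the non-vanishing condition on all of $]0,4]$ together with continuity gives a uniform lower bound $|P(x)| \geq c > 0$ on any compact subinterval $[\delta,4]$. I expect the main obstacle to be controlling the sum over $k$ uniformly in $N$ and in the indices $i,j$, especially extracting the gain of $h^2$ near the boundary that upgrades ordinary stability to \emph{strong} stability: this is where the factor distinguishing interior from boundary indices in \eqref{def strong stability} must be produced, presumably by using that $\mathbf{w}$ is itself in the range after one application of $\mathbf{D}_N(d)$ and exploiting cancellation/summation-by-parts in the sine sums near the endpoints. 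Once the kernel estimate $\sup_i \sum_j |G^N_{ij}| \leq C$ (with the appropriate $h^2$ weight on boundary columns) is in hand, the strong stability inequality \eqref{def strong stability} follows immediately, with the constant $c$ depending only on $P'(0)$ and the minimum of $|P|$ on $]0,4]$, and in particular independent of $N$.
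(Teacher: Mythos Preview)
Your approach is genuinely different from the paper's, and the step you yourself identify as ``the main obstacle'' is never carried out. The paper does not attempt a direct spectral estimate of the full kernel $G^N_{ij}$. Instead it factors $P(X)=\beta X\prod_{k=1}^{d}(X-\mu_k)$ with every $\mu_k\in\mathbb{C}\setminus[0,4]$ (this uses all three hypotheses in \eqref{assumption_theorem_positive}) and treats the two kinds of factors separately. For each shifted factor it proves (Lemma~\ref{lemma_op_borne}) a uniform bound $\|(\mathbf{A}_N-\mu_k\mathbf{I}_N)^{-1}\|_{\ell^\infty\to\ell^\infty}\leq c_{\mu_k}$, via the absolutely summable Fourier expansion of $(\cos x - z)^{-1}$ for $z\notin[-1,1]$. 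For the factor $X$ it uses the explicit closed form $(\mathbf{A}_N^{-1})_{i,j}=\min(j(1-hi),\,i(1-hj))$. The strong-stability estimate \eqref{def strong stability} then reduces to a direct computation with this explicit Green's function, where the boundary gain is transparent: for $j\leq l$ one has $(\mathbf{A}_N^{-1})_{i,j}\leq j\leq l$, which is exactly what makes the boundary columns $O(1)$ rather than $O(h^{-2})$.

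Your spectral route is viable in principle, but the mechanism you propose for the boundary gain is not correct. The remark that $\mathbf{w}$ ``is itself in the range after one application of $\mathbf{D}_N(d)$'' carries no information, since $\mathbf{D}_N(d)$ is invertible and the range is all of $\mathbb{C}^N$; there is no hidden structure on $\mathbf{w}$ to exploit. The boundary improvement in the kernel does not come from summation by parts or cancellation in the sine sums but from the pointwise smallness $|\sin(\pi k h j)|\lesssim khj$ for $j\leq l$, combined with $|P(\lambda_k^N)|^{-1}\lesssim (kh)^{-2}$ for small $k$. Turning this into a rigorous uniform bound on $G^N_{ij}$ essentially amounts to re-deriving the explicit Green's function of $\mathbf{A}_N$ from its eigenfunction expansion, which is more work than the paper's factorization-plus-closed-form argument and is not present in your proposal.
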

\begin{proof}
The assumptions \eqref{assumption_theorem_positive} implies that there exists $\beta\neq 0$ a real number and a sequence $(\mu_k)_{k=1\dots d}$ of complex numbers such that
\[ P(X) =\beta X \prod_{k=1}^d \left( X-\mu_k \right).\]
On the one hand, a straightforward calculation shows that $\mathbf{A}_N$ is invertible and 
\[\forall i,j\in \llbracket 1,N \rrbracket, \ (\mathbf{A}_N^{-1})_{i,j} = \min(j(1-hi),i(1-hj)). \]
On the other hand, since by assumption $\mu_k\notin [0,4]$, the following lemma (proved in Appendix \ref{proof_lemma_op_borne}) shows that $\mathbf{A}_N-\mu_k\mathbf{I}_N$ is invertible and that there exists a constant $c_{\mu_k}$ such that for all $N$, $\|(\mathbf{A}_N-\mu_k\mathbf{I}_N)^{-1}\|_{\infty}\leq c_{\mu_k}$.
\begin{lem}
\label{lemma_op_borne}
If $\mu\in \mathbb{C}\setminus [0,4]$ then there exists $c>0$ such that for all $N\in \mathbb{N}^*$, $\mathbf{A}_N-\mu \mathbf{I}_N$ is invertible and for all $\mathbf{v}\in \mathbb{C}^N$
\[ \| \mathbf{v} \|_{\infty}  \leq c \|\mathbf{A}_N \mathbf{v}-\mu \mathbf{v}\|_{\infty}. \]
\end{lem}
\noindent As a consequence, $P(\mathbf{A}_N)$ is invertible and we have
\[ \forall N\in \mathbb{N}^*, \forall \mathbf{v} \in \mathbb{R}^N, \ \|P(\mathbf{A}_N)^{-1} \mathbf{v}\|_{\infty} \leq |\beta|^{-1} \| \mathbf{A}_N^{-1} \mathbf{v}\|_{\infty} \prod_{k=1}^d c_{\mu_k}.\]
Hence, to prove Theorem \ref{theorem_positive_case}, it is enough to prove that $(\mathbf{A}_N)_N$ is strongly stable \eqref{def strong stability}. The estimation of strong stability of $(\mathbf{A}_N)_N$ is very explicit and is given for $l\in \mathbb{N}$ by
\begin{align*}
&\|\mathbf{A}_N^{-1} \mathbf{v} \|_{\infty}\\
 &\leq \sup_{i=1}^N \sum_{j=1}^N |\mathbf{v}_j|  \min\{ i(1-hj),j(1-hi) \}\\
& \leq  \sup_{i=1}^N \sum_{j \in \llbracket l+1,N-l\rrbracket } |\mathbf{v}_j|  \min\{ i(1-hj),j(1-hi) \} + \sup_{i=1}^N \sum_{j \in \llbracket l+1,N-l\rrbracket ^c }  |\mathbf{v}_j|  \min\{ i(1-hj),j(1-hi) \}  \\
&\leq  \sup_{i=1}^N \sum_{j \in \llbracket l+1,N-l\rrbracket } |\mathbf{v}_j|  \frac4h+ \sup_{i=1}^N \sum_{j \in \llbracket l+1,N-l \rrbracket ^c }  |\mathbf{v}_j|  l  \\
&\leq \sup_{j=1}^N \left\{  \begin{array}{lll} 4 h^{-2} |\mathbf{v}_j| & \textrm{ if }  l+1 \leq j\leq N-l,\\
															2l^2  |\mathbf{v}_j| & \textrm{ else.}
\end{array} \right.
\end{align*}
\end{proof}

\medskip

\noindent \bf Application 1: Proof of the first part of Theorem \ref{theo alea}.\rm \\
The more direct application of this criterion of strong stability is the first part of Theorem \ref{theo alea}. Since we have proved in Lemma \ref{lemme toute formule est un polynome du prince} that $P\mapsto P(a)$ induce an isomorphism of vector space between $X\mathbb{C}_{l}[X]$ and $ \mathscr{C}_{\mathbb{C},l}$, it is enough to prove that almost all complex polynomials of degree smaller than $l+1$ do not have any zero point in $[0,4]$ to conclude with the criterion of stability of Theorem \ref{theorem_positive_case}. In fact, we show that almost all complex polynomials of degree smaller than $l+1$ do not have any real zero point.

\begin{proof} Since the null sets are the same for all the Lebesgue measures on $\mathbb{C}_{l}[X]$ it is enough to prove the result for one well chosen Lebesgue measure.
 As a consequence, we introduce $\lambda$ be a Lebesgue measure on $\mathbb{R}_{l}[X]$ and we consider $\lambda^{\otimes 2}$ as a Lebesgue measure on $\mathbb{C}_l[X]$ induced by the direct sum 
\[ \mathbb{C}_l[X] = \mathbb{R}_l[X] \oplus i \mathbb{R}_l[X].\]
Now, we remark that if a polynomial $P\in \mathbb{C}_l[X]$ admits the decomposition $P=P_1+iP_2$ and has a real zero point $x \in \mathbb{R}$ then $x$ is a zero point of $P_1$ and of $P_2$. As a consequence, we conclude by the following calculation
\begin{align*}
 \lambda^{\otimes 2} \{ P\in  \mathbb{C}_l[X] \ | \ \exists x\in \mathbb{R}, \ P(x)=0  \} &= \int_{\mathbb{R}_l[X]}  \int_{\mathbb{R}_l[X]} \mathbb{1}_{\exists x\in \mathbb{R}, \ (P_1+iP_2)(x)=0} { \rm d \lambda(P_1)  } { \rm d \lambda(P_2)  }\\
&=  \int_{\mathbb{R}_l[X]}  \int_{\mathbb{R}_l[X]}  \mathbb{1}_{\exists x\in \mathbb{R}, \ P_2(x)=P_1(x)=0}  { \rm d \lambda(P_1)  } { \rm d \lambda(P_2)  }\\
&\leq  \int_{\mathbb{R}_l[X]} \sum_{x\in \mathbb{R}, P_2(x)=0} \int_{\mathbb{R}_l[X]}  \mathbb{1}_{ P_1(x)=0} { \rm d \lambda(P_1)  } { \rm d \lambda(P_2)  }\\
&=0.
\end{align*}
The last equality is nothing but, since $\{ P_1 \in \mathbb{R}_l[X] | \ P_1(x)=0 \}$ is an hyperplane of $\mathbb{R}_l[X]$, its Lebesgue measure is zero.
\end{proof}

\medskip

\noindent \bf Application 2: Proof of Theorem \ref{theo les schemas optimaux sont stables} .\rm \\
The second application of the criterion of strong stability of Theorem \ref{theorem_positive_case} is the Theorem \ref{theo les schemas optimaux sont stables} about the strong stability of the most efficient schemes. In fact, to apply this criterion to the optimal formulas of Theorem \ref{theo les formules optimales}, we exactly have to prove that the optimal polynomials $R_{l,m}$ of Theorem \ref{theo les poly optimaux} do not have any zeros point in $[0,4]$.
\begin{proof}
Let $l,m\in \mathbb{N}$ be some integers and $R_{l,m}$ the optimal polynomial given by Theorem \ref{theo les poly optimaux}. In the proof of this theorem, $R_{l,m}$ is built as the numerator of the Padé approximant of the function $C$ \eqref{arcsin_rescalle}. Futhermore, as we have explained in the proof of Theorem \ref{theo les poly optimaux},  D. Karp and E. Prilepkina have proved in \cite{KARP2012348} that $C(-X)$ is a Stieltjes transform of a measure supported in $[0,4]$. As a consequence, we can use the classical results about the localization of the zeros points and poles of the Padé approximants of such series.

\medskip

On the one hand, it is enough to apply the point $(vii)$ of Theorem $3$ page $251$ of the book of J. Gilewicz \cite{MR513420} to prove that if $k\leq  0$ and $l\geq -k$ then all the zero points of $R_{l+k,l}$ are in $]4,\infty[$.

\medskip
On the other hand, J. Gilewicz proves at the point $(iii)$ of this theorem that if $k\geq  -1$, $l+k\geq 0$ and $l\geq 0$ then all the zero points of $Q_{l+k,l}$ (the denominator of the Padé approximant of $C$) are in $]4,\infty[$. Futhermore, page $264$ of his book \cite{MR513420}, J. Gilewicz gives a theorem of Stieltjes and Wynn (point $(iii)$ of Theorem $5$) that implies that if $k \geq 0$ and $l\leq 0$ then
\[  \forall x\in [0,4], \ \frac{R_{l+k,l}(x)}{Q_{l+k,l}(x)}\leq \frac{R_{l+k+1,l+1}(x)}{Q_{l+k+1,l+1}(x)}.\]
Since $Q_{l+k,l}$ does not have any zero point on $[0,4]$ and since by construction $Q_{l+k,l}(0)=R_{l+k,l}(0)=1$ then it follows that for all $k\geq 0$ and all $l\geq 0$ we have
\[ \forall x\in [0,4], \ Q_{l+k,l}(x)>0.\]
As a consequence, if $k \geq 0$ and $l\geq 0$ then, we have
\[  \forall x\in [0,4], \ \frac{Q_{l+k+1,l+1}(x)}{Q_{l+k,l}(x)} R_{l+k,l}(x) \leq R_{l+k+1,l+1}(x).\]
 Consequently, if for all $k\geq 0$, we prove that $R_{k,0}$ is positive on $[0,4]$, then we conclude by induction on $l\geq 0$, that $R_{l+k,l}$ is positive on $[0,4]$. Indeed, it is clear that $R_{k,0}$ is positive on $[0,4]$ because by construction (see Theorem \ref{theo les poly optimaux}), we have
 \[ R_{k,0} =  2 \sum_{n=0}^k \frac{ X^{n} }{(n+1)^2 C_{2n+2}^{n+1}} >0 \textrm{ on } \mathbb{R}^+. \]
\end{proof}

\subsection{Relative stability}

\begin{theo}
\label{le_theoreme_diophantien}
Let $P\in \mathbb{C}[X]$ be a polynomial and let $\Lambda$ be the set of the roots of $P$ in $[0,4]$ and assume that $P$ satisfies the following assumptions:
\begin{enumerate}[i)]
\item $0\in \Lambda$,
\item $4\notin \Lambda$,
\item the roots of $P$ in $[0,4]$ are simple,
\item $\exists \delta: \mathbb{N}^* \to \mathbb{R}_+^*$,
\begin{equation}
\label{diophantine_assumptions}
 \forall \lambda\in \Lambda, \forall q\in \mathbb{N}^*, \forall 1\leq p\leq  q-1, \  \ 0<\delta_q\leq \left| \lambda- 4\sin^2 \left( \frac{\pi}2 \frac{p}q  \right)  \right|.
\end{equation}
\end{enumerate}
Then the sequence of finite difference matrices $(P(\mathbf{A}_N))_{N \in \mathbb{N}^*}$ is stable relatively to the sequence $\eta_N = \frac1{\delta_{N+1}}$ \eqref{def relative stability}.
\end{theo}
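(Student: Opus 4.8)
The plan is to deduce the relative stability estimate \eqref{def relative stability} from a bound of the form $\|P(\mathbf{A}_N)^{-1}\|_\infty \le c^{-1}\,\delta_{N+1}^{-1}$, uniform in $N$; applying this to $\mathbf{v}=P(\mathbf{A}_N)^{-1}\mathbf{w}$ then gives exactly \eqref{def relative stability} with $\eta_N=\delta_{N+1}^{-1}$. First I would split $P$ according to the location of its roots, writing $P(X)=P_0(X)\,Q(X)$, where $P_0(X)=c_P\,X\prod_{i=1}^{r}(X-\lambda_i)$ collects the roots of $P$ lying in $[0,4]$ — namely $0$ and the nonzero roots $\lambda_1,\dots,\lambda_r\in(0,4)$, all simple by hypotheses $(i)$–$(iii)$ — while $Q(X)=\prod_k (X-\mu_k)$ gathers the remaining roots, all outside $[0,4]$. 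Since these matrices are polynomials in $\mathbf{A}_N$ they commute, so $P(\mathbf{A}_N)^{-1}=P_0(\mathbf{A}_N)^{-1}Q(\mathbf{A}_N)^{-1}$. By Lemma \ref{lemma_op_borne}, each $(\mathbf{A}_N-\mu_k\mathbf{I}_N)^{-1}$ is bounded uniformly in $N$, hence so is $\|Q(\mathbf{A}_N)^{-1}\|_\infty$ by submultiplicativity; the entire difficulty is therefore concentrated in $P_0$.

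To treat $P_0$, I would decompose $1/P_0$ into partial fractions. As $0,\lambda_1,\dots,\lambda_r$ are pairwise distinct simple roots, there are constants $a_0,\dots,a_r$ such that $P_0(\mathbf{A}_N)^{-1}=a_0\,\mathbf{A}_N^{-1}+\sum_{i=1}^{r}a_i\,(\mathbf{A}_N-\lambda_i\mathbf{I}_N)^{-1}$. The first term is handled by the explicit Green function $(\mathbf{A}_N^{-1})_{j,k}=\min(k(1-hj),j(1-hk))$ already used in the proof of Theorem \ref{theorem_positive_case}, which gives $\|\mathbf{A}_N^{-1}\|_\infty=O((N+1)^2)$. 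Here I would crucially exploit that $0\in\Lambda$: taking $\lambda=0$ and $p=1$ in \eqref{diophantine_assumptions} yields $\delta_q\le 4\sin^2(\tfrac{\pi}{2q})\le \pi^2 q^{-2}$, so that $(N+1)^2\le \pi^2\,\delta_{N+1}^{-1}$ and the $\mathbf{A}_N^{-1}$ contribution is already of the desired size $O(\delta_{N+1}^{-1})$.

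The heart of the proof, and the step I expect to be the main obstacle, is the estimate $\|(\mathbf{A}_N-\lambda_i\mathbf{I}_N)^{-1}\|_\infty\le C_i\,\delta_{N+1}^{-1}$ for each fixed $\lambda_i\in(0,4)$. The plan is to use the closed form for the inverse of the tridiagonal Toeplitz matrix $\mathbf{A}_N-\lambda_i\mathbf{I}_N$, which has diagonal $2\cos\theta_i$ and off-diagonals $-1$ when $\lambda_i=4\sin^2(\theta_i/2)$ with $\theta_i\in(0,\pi)$ fixed: one has $\left((\mathbf{A}_N-\lambda_i\mathbf{I}_N)^{-1}\right)_{j,k}=\frac{\sin(\min(j,k)\theta_i)\,\sin((N+1-\max(j,k))\theta_i)}{\sin\theta_i\,\sin((N+1)\theta_i)}$, whose only possible blow-up is the denominator $\sin((N+1)\theta_i)$, vanishing exactly when $\lambda_i$ is an eigenvalue of $\mathbf{A}_N$ (cf.\ \eqref{spectral_decompostion}). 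I would then lower-bound $|\sin((N+1)\theta_i)|$ via the Diophantine hypothesis: letting $\theta_{k_0}=\pi k_0/(N+1)$ be the zero of $\sin((N+1)\,\cdot\,)$ nearest to $\theta_i$, for $N$ large one has $k_0\in\llbracket 1,N\rrbracket$ (since $\theta_i$ is bounded away from $0$ and $\pi$), so the mean value theorem applied to $\theta\mapsto 4\sin^2(\theta/2)$, whose derivative is bounded by $2$, gives $|\theta_i-\theta_{k_0}|\ge \tfrac12|\lambda_i-4\sin^2(\tfrac{\theta_{k_0}}{2})|\ge \tfrac12\delta_{N+1}$; since $|(N+1)(\theta_i-\theta_{k_0})|\le \tfrac{\pi}{2}$, the inequality $|\sin x|\ge \tfrac{2}{\pi}|x|$ yields $|\sin((N+1)\theta_i)|\ge \tfrac{1}{\pi}(N+1)\delta_{N+1}$. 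Bounding the two numerator sines by $1$ and summing the $N$ entries of a row then gives $\|(\mathbf{A}_N-\lambda_i\mathbf{I}_N)^{-1}\|_\infty\le N\cdot\frac{1}{\sin\theta_i}\cdot\frac{\pi}{(N+1)\delta_{N+1}}\le \frac{\pi}{\sin\theta_i}\,\delta_{N+1}^{-1}$, as wanted; the finitely many small $N$ for which $k_0$ could be a boundary index are absorbed into $C_i$, using that $\mathbf{A}_N-\lambda_i\mathbf{I}_N$ is invertible there because $\delta_{N+1}>0$.

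Finally I would assemble the pieces: the partial-fraction identity together with the two preceding estimates gives $\|P_0(\mathbf{A}_N)^{-1}\|_\infty\le C\,\delta_{N+1}^{-1}$, and multiplying by the uniform bound on $\|Q(\mathbf{A}_N)^{-1}\|_\infty$ yields $\|P(\mathbf{A}_N)^{-1}\|_\infty\le \tilde c\,\delta_{N+1}^{-1}$, which is precisely \eqref{def relative stability} with $\eta_N=\delta_{N+1}^{-1}$ and $c=\tilde c^{-1}$. The technical core is the denominator estimate of the third paragraph, where the precise form of \eqref{diophantine_assumptions} is exactly what converts the spectral gap between $\lambda_i$ and the grid of eigenvalues $4\sin^2(\tfrac{\pi p}{2q})$ into the lower bound on $|\sin((N+1)\theta_i)|$, and where $0\in\Lambda$ is what guarantees $\delta_{N+1}^{-1}$ already dominates the $(N+1)^2$ growth coming from $\mathbf{A}_N^{-1}$.
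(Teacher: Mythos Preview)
Your proof is correct and shares the same global architecture as the paper's (factor out the roots outside $[0,4]$, partial fractions over the simple roots in $[0,4]$, use $0\in\Lambda$ with $p=1$ to get $\delta_q\le \pi^2 q^{-2}$ so that the $\mathbf{A}_N^{-1}$ contribution is $O(\delta_{N+1}^{-1})$), but the key step for $\lambda_i\in(0,4)$ is handled differently. The paper stays in the spectral domain: it writes $P(\mathbf{A}_N)^{-1}$ via the eigenbasis $(\mathbf{e}^N_k)$, bounds $\|P(\mathbf{A}_N)^{-1}\|_\infty$ by the scalar sum $\sum_{k=1}^{N}|P(4\sin^2(\tfrac{\pi}{2}kh))|^{-1}$, and then controls $\sum_k |4\sin^2(\tfrac{\pi}{2}kh)-\lambda_i|^{-1}$ by isolating the single index $p_{N+1}$ closest to $\tilde\lambda_i$ (bounded by $\delta_{N+1}^{-1}$ from \eqref{diophantine_assumptions}) and bounding the remaining terms by $O(h^{-2})$. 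You instead work in physical space, using the closed-form Green function $((\mathbf{A}_N-\lambda_i\mathbf{I}_N)^{-1})_{j,k}=\sin(\min(j,k)\theta_i)\sin((N{+}1{-}\max(j,k))\theta_i)/(\sin\theta_i\,\sin((N{+}1)\theta_i))$ and converting \eqref{diophantine_assumptions} into a lower bound on $|\sin((N{+}1)\theta_i)|$. Both arguments ultimately exploit the same Diophantine gap, but yours gives entrywise control of the resolvent (potentially useful for strong-stability refinements), while the paper's spectral sum avoids invoking Lemma~\ref{lemma_op_borne} for the outside roots, since $1/Q$ is simply bounded on $[0,4]$ as a scalar factor in the partial fraction.
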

\begin{proof}
see Appendix \ref{proof of the Diophantine criterion}.
\end{proof}

\medskip

\noindent \bf Application 1: stability for second order algebraic zero points\rm \\
The first application of this diophantine criteria is based on a classical result about approximation of algebraic numbers by rational ones. It gives a way to design sequences of matrices $\mathbf{D}_N$ that are stable \eqref{def stability},  but such that $\mathbf{D}_N$ has not a positive or a negative spectrum for all $N$.
\begin{theo} Liouville's Theorem. (from the book of Andrei B. Shidlovskii \cite{MR1033015} page 23)\\
If $\alpha$ is a real algebraic number of degree $n$, $n\geq 1$, then there exists a constant $c=c(\alpha)>0$ such that the following inequality holds for any $p\in \mathbb{Z}$ and
$q\in \mathbb{N}^*$, $\frac{p}{q}\neq \alpha$:
\[ \left| \alpha - \frac{p}q \right| > \frac{c}{q^n}. \]
\end{theo}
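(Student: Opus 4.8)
The plan is to run the classical minimal-polynomial argument. First I would fix an irreducible polynomial $f \in \mathbb{Z}[X]$ of degree $n$ with $f(\alpha)=0$; such an $f$ exists by the definition of an algebraic number of degree $n$, and after clearing denominators we may take its coefficients $a_0,\dots,a_n$ to be integers. The decisive algebraic observation is that for any rational $p/q$ with $q\in\mathbb{N}^*$ the quantity
\[ q^n f\!\left( \frac{p}{q} \right) = a_n p^n + a_{n-1} p^{n-1} q + \cdots + a_0 q^n \]
is an integer, and it is \emph{nonzero} whenever $p/q\neq\alpha$: for $n\geq 2$ because an irreducible polynomial of degree at least $2$ has no rational root, and for $n=1$ immediately since $\alpha=-a_0/a_1$ is then the unique root. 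Consequently $|q^n f(p/q)|\geq 1$, which yields the lower bound
\[ \left| f\!\left( \frac{p}{q} \right) \right| \geq \frac{1}{q^n}. \]

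Second, I would produce a matching upper bound for $|f(p/q)|$ in terms of the distance $|\alpha-p/q|$. It suffices to treat rationals in the compact interval $I=[\alpha-1,\alpha+1]$, since any $p/q$ outside $I$ satisfies $|\alpha-p/q|>1\geq q^{-n}$, so the claimed inequality holds trivially there for any $c\leq 1$. On $I$ the continuous function $|f'|$ attains a finite maximum $M$, and $M>0$ because $f'$ is a nonzero polynomial (a polynomial vanishing on an interval is identically zero, contradicting $\deg f=n\geq 1$). Applying the mean value theorem to $f$ between $\alpha$ and $p/q$, together with $f(\alpha)=0$, gives some $\xi\in I$ with
\[ \left| f\!\left( \frac{p}{q} \right) \right| = |f'(\xi)|\left| \alpha - \frac{p}{q} \right| \leq M \left| \alpha - \frac{p}{q} \right|. \]

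Finally I would combine the two estimates. For rationals in $I$ they give $M\,|\alpha-p/q|\geq q^{-n}$, hence $|\alpha-p/q|\geq (M q^n)^{-1}$, while the trivial bound disposes of the rationals outside $I$. Choosing any $c\in(0,\min(1,1/M))$ strictly below both thresholds upgrades these non-strict inequalities to the strict inequality in the statement, uniformly in $p$ and $q$. Since $M$ depends only on $\alpha$ (through $f$ and the fixed interval $I$), the resulting $c=c(\alpha)$ is independent of $p,q$, as required. The only genuinely delicate point is the nonvanishing of $q^n f(p/q)$, which is precisely where irreducibility (equivalently, minimality of the degree $n$) of $f$ is used; the remainder is just a quantitative packaging of the mean value theorem.
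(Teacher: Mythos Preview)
Your argument is correct and is precisely the classical minimal-polynomial proof of Liouville's theorem. Note, however, that the paper does not actually prove this statement: it is quoted as a known result from Shidlovskii's book and is used only as a black box to derive the subsequent corollary on stability for second-order algebraic zero points. So there is no ``paper's own proof'' to compare against; you have simply supplied the standard textbook argument where the paper chose to cite one.
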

\begin{corol} If a polynomial $P\in \mathbb{C}[X]$ satisfies the three first hypothesis of Theorem \ref{le_theoreme_diophantien} and if for all root $\lambda\in \Lambda\setminus\{0\}$ there exist an algebraic number of degree $2$, $\alpha$, such that $\lambda= 4\sin^2(\frac{\pi}2 \alpha)$, then the sequence of finite difference matrices $(P(\mathbf{A}_N))_N$ is stable \eqref{def stability}.
\end{corol}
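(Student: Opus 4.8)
The plan is to produce a diophantine control function $\delta$ meeting hypothesis iv) of Theorem \ref{le_theoreme_diophantien} with a quadratic lower bound $\delta_q \geq c/q^2$, and then to observe that the relative stability it yields, with $\eta_N = 1/\delta_{N+1}$, coincides with plain stability \eqref{def stability}. Indeed, if $\delta_q = c/q^2$ then $\eta_N = c^{-1}(N+1)^2 = c^{-1}h^{-2}$, so relative stability $c'\|\mathbf{v}\|_\infty \leq \eta_N\|P(\mathbf{A}_N)\mathbf{v}\|_\infty$ reads $cc'\|\mathbf{v}\|_\infty \leq h^{-2}\|P(\mathbf{A}_N)\mathbf{v}\|_\infty$, which is exactly \eqref{def stability}. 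Hypotheses i)--iii) of Theorem \ref{le_theoreme_diophantien} are assumed in the corollary, so the whole task reduces to establishing the diophantine estimate \eqref{diophantine_assumptions} in the sharp form $\delta_q = c/q^2$.

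The heart of the matter is a lower bound, uniform over $1\le p\le q-1$, of the form $\left| \lambda - 4\sin^2\left( \tfrac{\pi}2\tfrac{p}q \right) \right| \geq c_\lambda/q^2$ for each root $\lambda \in \Lambda$. I would set $g(x) = 4\sin^2(\tfrac{\pi}2 x)$, a smooth strictly increasing bijection from $[0,1]$ onto $[0,4]$ with $g'(x) = 2\pi\sin(\pi x)$, so that the comparison points are precisely $g(p/q)$ with $p/q\in(0,1)$. For a root $\lambda\in\Lambda\setminus\{0\}$, write $\lambda = g(\alpha)$ with $\alpha$ algebraic of degree $2$; using the evenness and $2$-periodicity of $\sin^2(\tfrac{\pi}2\cdot)$, realized by the integer-affine maps $\alpha\mapsto\alpha+2$, $\alpha\mapsto-\alpha$, $\alpha\mapsto2-\alpha$ that preserve the degree over $\mathbb{Q}$, I may assume $\alpha\in(0,1)$, the inclusion being strict because $\lambda\in(0,4)$ (here $4\notin\Lambda$ is used), whence $g'(\alpha)>0$.

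I would then split over $p/q$. On a fixed neighborhood $[\alpha-\epsilon,\alpha+\epsilon]\subset(0,1)$ on which $g'\geq m>0$, the mean value theorem gives $|g(\alpha)-g(p/q)|\geq m|\alpha-p/q|$, and Liouville's theorem, applicable since $\alpha$ is algebraic of degree $2$, yields $|\alpha-p/q| > c/q^2$; away from this neighborhood, the compactness of $\{x\in[0,1] : |x-\alpha|\geq\epsilon\}$ together with the injectivity of $g$ produces a uniform constant $\eta>0$ with $|g(\alpha)-g(p/q)|\geq\eta\geq\eta/q^2$. For the root $\lambda=0$ (present by hypothesis i)), Liouville is neither available nor needed: monotonicity of $g$ reduces the minimum over $p$ to $p=1$, and Jordan's inequality gives $g(1/q)=4\sin^2(\tfrac{\pi}{2q})\geq 4/q^2$ directly. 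Since $\Lambda$ is finite, taking the minimum of the finitely many constants $c_\lambda$ gives a single $c>0$ for which $\delta_q := c/q^2$ satisfies \eqref{diophantine_assumptions}, and Theorem \ref{le_theoreme_diophantien} concludes.

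The main obstacle is precisely this core estimate, which is where the degree-$2$ algebraicity enters. The delicate point is the transfer from the Liouville control of $|\alpha-p/q|$ to a control of $|g(\alpha)-g(p/q)|$: it requires $g'(\alpha)\neq 0$, guaranteed only because the hypotheses force $\lambda\in(0,4)$ and hence $\alpha$ interior. The degenerate values $\lambda=0$ and $\lambda=4$ are exactly those where $g'$ vanishes, which is why $\lambda=0$ is handled by a separate direct computation (that happens to still give a $1/q^2$ bound) and why the assumption $4\notin\Lambda$ is indispensable.
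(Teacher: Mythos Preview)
Your proposal is correct and follows the approach the paper intends: the corollary is stated immediately after Liouville's theorem, with no proof, as its direct consequence combined with Theorem \ref{le_theoreme_diophantien}. Your write-up supplies the details the paper omits---the reduction of $\alpha$ to $(0,1)$ via the integer-affine symmetries of $\sin^2(\tfrac{\pi}{2}\,\cdot\,)$, the transfer from Liouville's bound on $|\alpha-p/q|$ to the bound on $|\lambda-4\sin^2(\tfrac{\pi}{2}\tfrac{p}{q})|$ through the diffeomorphism $g$, and the separate direct estimate for $\lambda=0$---and correctly identifies that $\delta_q=c/q^2$ makes the resulting relative stability coincide with plain stability.
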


\medskip

\noindent \bf Application 2: Proof of the second part of Theorem \ref{theo alea}.\rm \\
The proof of the second part of Theorem \ref{theo alea} is an adaptation of a classical qualitative result about approximation of real numbers by rational ones. 
\begin{theo} A version of the Khinchin's Theorem. (see for example \cite{MR1033015} page 17)\\
\label{Khinchin's Theorem} 
	Let $(\nu_{q})_q$ be a sequence of positive real numbers such that the series $\sum \nu_{q}$ converges. Then, for almost all $\alpha\in \mathbb{R}$, there exists a constant $c>0$ such that for all $p,q\in \mathbb{Z}\times \mathbb{N}^*$, one has
	\[ |\alpha -\frac{p}q| \geq c \frac{\nu_{q}}{q}.\]
\end{theo}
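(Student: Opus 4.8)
The plan is to prove this via the Borel--Cantelli lemma, by controlling the Lebesgue measure of the set of $\alpha$ that admit unusually good rational approximations. Since the quantity $\min_{p\in\mathbb{Z}}|\alpha-p/q|$ is invariant under $\alpha\mapsto\alpha+1$ (translating $\alpha$ by an integer merely shifts $p\mapsto p+q$), the whole statement is $\mathbb{Z}$-periodic, so it suffices to establish the conclusion for almost every $\alpha\in[0,1]$. Moreover the rationals form a null set, so I may freely restrict to irrational $\alpha$, for which $|\alpha-p/q|>0$ always and the hypothesis $p/q\neq\alpha$ is automatic.

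First I would introduce, for each $q\in\mathbb{N}^*$, the set of points that are $\nu_q/q$-approximable at level $q$,
\[ A_q = \left\{ \alpha \in [0,1] \ \Big| \ \exists\, p\in\mathbb{Z}, \ \left| \alpha - \frac{p}{q} \right| < \frac{\nu_q}{q} \right\}. \]
This set is covered by the intervals of radius $\nu_q/q$ centered at the points $p/q$ meeting $[0,1]$, of which there are $O(q)$; hence there is an absolute constant $C$ with $\mathrm{meas}(A_q)\leq C\nu_q$. The convergence hypothesis $\sum_q\nu_q<\infty$ then gives $\sum_q\mathrm{meas}(A_q)\leq C\sum_q\nu_q<\infty$, and the Borel--Cantelli lemma yields that almost every $\alpha\in[0,1]$ lies in only finitely many of the $A_q$.

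It remains to convert this qualitative statement into the quantitative one, which is the only genuinely delicate point. Fix an irrational $\alpha\in[0,1]$ belonging to finitely many $A_q$, and choose $Q_0$ with $\alpha\notin A_q$ for all $q>Q_0$; by definition this already gives $|\alpha-p/q|\geq\nu_q/q$ for every $p\in\mathbb{Z}$ and every $q>Q_0$, i.e.\ the desired estimate holds with constant $1$ for large denominators. For the finitely many remaining denominators $1\leq q\leq Q_0$, this is exactly where irrationality is used: each quantity $m_q:=\min_{p\in\mathbb{Z}}|\alpha-p/q|$ is strictly positive, so $|\alpha-p/q|\geq m_q=(m_q q/\nu_q)(\nu_q/q)$ for all $p$. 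Setting
\[ c := \min\left( 1, \ \min_{1\leq q\leq Q_0} \frac{m_q\,q}{\nu_q} \right) > 0, \]
one obtains $|\alpha-p/q|\geq c\,\nu_q/q$ simultaneously for all $(p,q)\in\mathbb{Z}\times\mathbb{N}^*$, which is the claimed inequality.

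Finally I would transfer the full-measure conclusion from $[0,1]$ to all of $\mathbb{R}$ using the periodicity noted at the start. I expect the main obstacle to be not any single computation but the logical decomposition itself: realizing that "for almost all $\alpha$ there exists a constant $c$" splits into a large-$q$ tail handled uniformly by Borel--Cantelli and a small-$q$ part where the constant must be manufactured by hand from the positivity of the finitely many distances $m_q$. The measure bound on $A_q$ and the summation are routine once the sets $A_q$ are correctly set up.
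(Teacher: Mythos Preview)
The paper does not actually prove this theorem: it is quoted as a known result with a reference to Shidlovskii's book, and then its \emph{method} is adapted in the subsequent paragraphs to show that the set $E_3$ is null. Your proposal is a correct, self-contained proof and follows exactly the classical Borel--Cantelli route that the paper alludes to and then mimics (defining the sets $F_q$, bounding their measure by $O(\nu_q)$, summing, and invoking Borel--Cantelli). The additional step you supply---manufacturing the constant $c$ from the finitely many small denominators $q\leq Q_0$ using irrationality of $\alpha$---is precisely what is needed to pass from ``$\alpha$ lies in finitely many $A_q$'' to the uniform inequality, and is handled cleanly. In short: your argument is correct and matches the standard proof the paper is implicitly invoking.
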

More precisely, to prove the second part of Theorem \ref{theo alea} with the criterion of Theorem \ref{le_theoreme_diophantien}, it is enough to prove that the following set are null set for a Lebesgue measure on $\mathbb{R}_l[X]$ (they are the sets of the polynomials that do not satisfy $ii, iii$ or $iv$):
\[ E_1 = \{ R \in \mathbb{R}_l[X] \ | \ R(4)=0 \textrm{ or } R(0)=0  \}, \]
\[ E_2 =  \{R \in  \mathbb{R}_l[X] \ | \  \exists \lambda \in [0,4], \ R( \lambda)=R'( \lambda)=0  \}, \]
\[ E_3 = \left\{ R \in \mathbb{R}_l[X] \ | \  \exists \lambda \in [0,4], \ R(\lambda)=0 \textrm{ and } \liminf_{q \to \infty} \min_{p\in \llbracket 1,q-1\rrbracket} \eta_{q-1} \left| \lambda- 4\sin^2 \left( \frac{\pi}2 \frac{p}q  \right)  \right| = 0  \right\} . \]
Indeed, since we have proved in Lemma \ref{lemme toute formule est un polynome du prince} that $R\mapsto (XR)(a)$ induce an isomorphism of vector space between $\mathbb{R}_{l}[X]$ and $ \mathscr{C}_{\mathbb{R},l}$, the null sets for the Lebesgue measures on $\mathbb{R}_{l}[X]$ are associated to the null sets for the Lebesgue measures on $ \mathscr{C}_{\mathbb{R},l}$.

\medskip

It is quite clear that $E_1$ and $E_2 $ are null sets. Indeed, $E_1$ is a null set because since $P \mapsto P(4)$ is linear, it is an hyperplane and $E_2$ is a null set because it is the set of the zero points of the discriminant $\Delta(R)=\Res(R,R')$ that is a non zero polynomial of $R$. However, to prove that $E_3$ is a null set, we have to adapt the proof of the Khinchin's Theorem \ref{Khinchin's Theorem}.

\medskip

 In order to use the Borel Cantelli Theorem, we introduce a probability measure $\rho$ on $\mathbb{R}_l[X]$ with the same null set as a Lebesgue measure. 
 More precisely, we introduce the Lebesgue measure $\mu$ on $\mathbb{R}_l[X]$ induced by the Hardy's scalar product $\langle .,. \rangle_{\mathcal{H}^2}$. This scalar product is defined by
 \[ \forall R_1,R_2 \in \mathbb{R}_l[X], \ \langle R_1,R_2 \rangle_{\mathcal{H}^2} := \sum_{k=0}^l \frac{R_1^{(k)}(0) R_2^{(k)}(0)}{k!^2}.  \]
 Then, we define $\rho$ through its density with respect to $\mu$
 \[ \frac{ {\rm d \rho}  }{ {\rm d \mu}  } = \frac1{\sqrt{2\pi}^{l+1}} e^{ - \frac12 \| R  \|_{\mathcal{H}^2}^2} . \]
 As $\rho$ has a positive density with respect to $\mu$, $\rho$ and $\mu$ have the same null sets.\\
Hence, since $E_2$ is a null set, it is enough to prove that $E_3 \cap E_2^c$ is a null set. As a consequence, we can use the following inclusion
\[  E_2^c  \cap  E_3 \subset  E_2^c  \cap \left\{ R \in \mathbb{R}_l[X] \ | \ \liminf_{q \to \infty} \min_{p\in \llbracket 1,q-1\rrbracket} \eta_{q-1} \left| R \left( 4\sin^2 \left( \frac{\pi}2 \frac{p}q  \right) \right) \right| = 0  \right\}.\]
Then, we introduce the measurable sets
\[ F_q := \left\{ R \in \mathbb{R}_l[X] \ | \  \min_{p\in \llbracket 1,q-1\rrbracket}  \left| R \left( 4\sin^2 \left( \frac{\pi}2 \frac{p}q  \right) \right) \right| \leq \frac1{\eta_{q-1}}  \right\}, \]
to get the inclusion
\[ E_2^c  \cap  E_3 \subset E_2^c \cap  \limsup_{q \to \infty} F_q.\]
Consequently, it is enough to prove that $\sum \rho(F_q) <\infty$ to conclude by the Theorem of Borel Cantelli that $E_3$ is a null set.

\medskip

To control $\rho(F_q)$, we begin assuming the following lemma, that we will show at the end of this proof.
\begin{lem}
\label{lemma measure set}
 For all $\lambda\in \mathbb{R}$ and for all $\beta>0$, we have 
\[  \rho \left( \left\{  R \in \mathbb{R}_l[X] \ | \ |R(\lambda)| \leq \beta \right\} \right)\leq \sqrt{\frac{2}\pi} \beta . \]
\end{lem}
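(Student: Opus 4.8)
The plan is to reduce the statement to a one-dimensional Gaussian tail estimate by exploiting the fact that the Hardy scalar product makes the monomial basis orthonormal. First I would observe that, writing $R = \sum_{k=0}^l a_k X^k$, we have $R^{(k)}(0)/k! = a_k$, so that
\[ \langle R_1, R_2 \rangle_{\mathcal{H}^2} = \sum_{k=0}^l a_k^{(1)} a_k^{(2)}, \]
i.e.\ the family $(X^k)_{0 \le k \le l}$ is orthonormal and the Hardy norm is just the Euclidean norm of the coefficient vector. Consequently, identifying $R$ with $(a_0,\dots,a_l)\in\mathbb{R}^{l+1}$, the reference Lebesgue measure $\mu$ is the standard Lebesgue measure in these coordinates and the probability measure $\rho$ is exactly the standard Gaussian measure $\mathcal{N}(0,I_{l+1})$.

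Next I would note that evaluation at $\lambda$ is the linear form $R \mapsto R(\lambda) = \langle a, v_\lambda\rangle$, where $v_\lambda = (1,\lambda,\lambda^2,\dots,\lambda^l)$. Since the image of a standard Gaussian vector under a linear form $\langle\cdot, v\rangle$ is a centered real Gaussian of variance $\|v\|^2$, the random variable $R(\lambda)$ (seen as a function on the probability space $(\mathbb{R}_l[X],\rho)$) follows the law $\mathcal{N}(0,\sigma^2)$ with $\sigma^2 = \|v_\lambda\|^2 = \sum_{k=0}^l \lambda^{2k}$. The crucial -- and essentially only -- quantitative input is that this variance is bounded below, $\sigma^2 \ge 1$, because the term $k=0$ equals $1$ for every $\lambda \in \mathbb{R}$.

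Finally I would bound the probability directly. Since the density of $\mathcal{N}(0,\sigma^2)$ attains its maximum $1/(\sqrt{2\pi}\,\sigma)$ at the origin,
\[ \rho\big(\{\,|R(\lambda)|\le\beta\,\}\big) = \int_{-\beta}^{\beta}\frac{1}{\sqrt{2\pi}\,\sigma}\,e^{-t^2/(2\sigma^2)}\,dt \le \frac{2\beta}{\sqrt{2\pi}\,\sigma} \le \frac{2\beta}{\sqrt{2\pi}} = \sqrt{\tfrac{2}{\pi}}\,\beta, \]
using $\sigma\ge 1$ in the last inequality and the identity $2/\sqrt{2\pi}=\sqrt{2/\pi}$. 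This is exactly the claimed bound. I do not expect a genuine obstacle here: the entire content is the recognition that, in coefficient coordinates, $\rho$ is the standard Gaussian and evaluation at $\lambda$ is a linear form, after which the estimate is the elementary fact that a centered Gaussian density is dominated by its peak value; the uniform lower bound $\sigma\ge 1$ is what makes the resulting estimate independent of $\lambda$.
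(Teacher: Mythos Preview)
Your proof is correct and follows essentially the same approach as the paper: both identify $\rho$ with the standard Gaussian in the monomial coordinates (which are orthonormal for the Hardy product), observe that evaluation at $\lambda$ is a linear form whose Riesz representer has squared norm $\sum_{k=0}^l \lambda^{2k}\ge 1$, and then bound the resulting one-dimensional Gaussian probability by the peak density times $2\beta$. The only cosmetic difference is that the paper phrases the reduction via the Riesz representer and isotropy of $\rho$, while you phrase it via the pushforward law $\mathcal{N}(0,\sigma^2)$; the computation is the same.
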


\medskip

Consequently, we deduce from the last assumption of Theorem \ref{theo alea} that $\sum \rho(F_q) <\infty$,
\begin{align*}
\rho(F_q) &\leq \sum_{p=1}^{q-1} \rho \left\{ R \in \mathbb{R}_l[X] \ | \   \left| R \left( 4\sin^2 \left( \frac{\pi}2 \frac{p}q  \right) \right) \right| \leq \frac1{\eta_{q-1}}  \right\} \\
			  &\leq (q-1) \sqrt{\frac{2}\pi} \frac1{\eta_{q-1}} \in l^1(\mathbb{N}\setminus \{0,1\})
\end{align*}

\medskip

To conclude this proof, we have to prove Lemma \ref{lemma measure set}. We introduce the polynomial $R_{\alpha}\in \mathbb{R}_l[X]$ defined by
\[ R_\alpha(X) = \sum_{k=0}^l (\alpha X)^k. \]
$R_{\alpha}$ is the Riesz representer of the evaluation in $\alpha$
\[ \forall R \in \mathbb{R}_l[X], \  R(\alpha) = \langle R_{\alpha} , R \rangle_{\mathcal{H}^2}. \]
Consequently, since the Gaussian measure $\rho$ is isotropic, we have
\begin{align*}
\rho \left( \left\{  R \in \mathbb{R}_l[X] \ | \ |R(\lambda)| \leq \beta \right\} \right) &= \rho \left( \left\{  R \in \mathbb{R}_l[X] \ | \ | \langle R_{\alpha} , R \rangle_{\mathcal{H}^2}| \leq \beta \right\} \right) \\
																											    &= \frac1{\sqrt{2\pi}} \int_\mathbb{R}  \mathbb{1}_{ |y| \| R_{\alpha}  \|_{\mathcal{H}^2}^2 \leq \beta }  e^{- \frac{y^2}2} {\rm dy} \\
																											    &\leq \frac1{\sqrt{2\pi}} \int_\mathbb{R} \mathbb{1}_{|y|  \| R_{\alpha}  \|_{\mathcal{H}^2}^2 \leq \beta} {\rm dy} \\
																											    &= \sqrt{\frac{2}\pi} \beta \left( \sum_{k=0}^l \alpha^{2k} \right)^{-1} \leq  \sqrt{\frac{2}\pi} \beta.
\end{align*}

\section{Appendix}
\subsection{Proof of Proposition \ref{prop the schemes are consistent}}
\label{proof the schemes are consistent}
Let $f\in C^{\infty}(\mathbb{R})$ be the source function of the Dirichlet problem \eqref{Dirichet_homogene}, and $u$ its solution. For each $N\in \mathbb{N}^*$ we consider $\mathbf{u}^{N,ex} $ and $\mathbf{f}^{N,ex}$ the discretizations of $u$ and $f$ defined by \eqref{discretization}.

\medskip

We begin proving the consistency of the scheme in the center. We introduce an integer $j$ such that $\tau(d)<j<N+1-\tau(d)$. Then, we do the estimation of consistency with a Taylor Lagrange formula
\begin{align*}
& \left| \left( \mathbf{D}_N \mathbf{u}^{N,ex}\right)_j - h^2 \left( \mathbf{S}_N \mathbf{f}^{N,ex} \right)_j  \right| \\
&= \sum_{i\in \mathbb{Z}} d_{i} u(x^N_{i+j}) + h^2 s_i u''(x^N_{i+j}) \\
&= \sum_{i\in \mathbb{Z}} d_{i}  p_j(x^N_i) + h^2 s_i p_j''(x^N_i) +   \sum_{i\in \mathbb{Z}} d_{i} (\xi^{N,1}_{i,j} - x_i^N)^{n+2} \frac{u^{(n+2)}(\xi^{N,1}_{i,j}) }{(n+2)!}+ h^2 s_i  (\xi^{N,2}_{i,j} - x_i^N)^{n} \frac{u^{(n)}(\xi^{N,2}_{i,j}) }{n!},
\end{align*}
with $\xi^{N,1}_{i,j},\xi^{N,2}_{i,j} \in ]x_i^N,x_{i+j}^N[$ and
\[ p_j(X) = \sum_{k=0}^{ n+1 } \frac{u^{(k)}(x_j^N)}{k!} X^{k}.\]
However, we have proved in Lemma \ref{lem pour avoir la consistance, il suffit de s'exiter sur les polynomes}, that the polynomial part of this sum is zero. Consequently, it is enough to estimate the second part. Finally, we get
\begin{align*}
 \left| \left( \mathbf{D}_N \mathbf{u}^{N,ex}\right)_j - h^2 \left( \mathbf{S}_N \mathbf{f}^{N,ex} \right)_j  \right| &\leq h^{n+2} \|u^{(n+2)}\|_{L^{\infty}(0,1)} \sum_{i\in \mathbb{Z}} |d_{i}| \frac{\tau(d)^{n+2}}{(n+2)!} + |s_i| \frac{\tau(s)^{n}}{n!} .
\end{align*}

\medskip

The same type of estimations holds near the boundary and we can prove similarly that, if $\tau(d)\geq j$ or $j\geq N+1-\tau(d)$ and if $\frac{\mu h}2\leq \gamma$ then 
\begin{align*}
 \left| \left( \mathbf{D}_N \mathbf{u}^{N,ex}\right)_j - h^2 \left( \mathbf{S}_N \mathbf{f}^{N,ex} \right)_j  \right| &\leq h^{\mu+2} \|u^{(\mu+2)}\|_{L^{\infty}(-\gamma,1 + \gamma)} \max_{1 \leq k \leq \tau(d)} \sum_{i\in \mathbb{Z}} |d_{i}^k| \frac{\tau(d)^{\mu+2}}{(\mu+2)!} + |s_i^k| \frac{\tau(s^k)^{\mu}}{\mu!} .
\end{align*}

\subsection{Proof of Lemma \ref{lemma_op_borne}}
\label{proof_lemma_op_borne}
To prove this lemma, we need to use the notations introduced to define formally $\mathbf{D}_N(d)$ in Definition \ref{def formal D_N}. 

\medskip

Now, for all $p\in \mathbb{Z}$ and for all $N\in \mathbb{N}^*$, we introduce an operator $O_{p,N}$ on $\mathscr{E}_N$ defined by
\[ \forall w \in \mathscr{E}_N, \  O_{p,N}w =  \frac12 T_{ \mathbb{1}_{\{ p,-p \}} }w = \left(  \frac{w_{i+p} + w_{i-p}}2 \right)_{i\in \mathbb{Z}} .\]
A straightforward calculation shows that the spectral decomposition of $O_{p,N} $ is
\begin{equation}
\label{dec_spec_O}
\forall k\in \mathbb{Z}, \ O_{p,N} e_{k,N} = \cos(p \pi k h ) e_{k,N} \textrm{ with } e_{k,N} = (\sin(k \pi hj))_{j\in \mathbb{Z}}.
\end{equation}

\medskip

Let $z \in \mathbb{C}\setminus [-1,1]$ be a complex number. Since the periodic function $x\mapsto (\cos(x)- z)^{-1}$ is real analytic, its Fourier transform is summable. More precisely, there exists $(c_p(z))\in l^1(\mathbb{N})$ such that
\[\forall x\in \mathbb{R}, \ \frac1{\cos(x)- z} = \sum_{p \in \mathbb{N}} c_p(z) \cos(p x).\]
Since $(c_p)$ is summable, it follows from \eqref{dec_spec_O} that $O_{1,N}-zI_{\mathscr{E}_N}$ is invertible and
\[ (O_{1,N}-zI_{\mathscr{E}_N})^{-1} =  \sum_{p \in \mathbb{N}} c_p(z) O_{p,N}. \] 
Furthermore, if $w \in \mathscr{E}_N$ then for all $p\in \mathbb{N}$ 
\[ \|O_{p,N} w\|_{l^{\infty}(\mathbb{Z})} \leq \|w\|_{l^{\infty}(\mathbb{Z})}.\]
 As a consequence, we have
\[  \|(O_{1,N}-zI_{\mathscr{E}_N})^{-1}w\|_{l^{\infty}(\mathbb{Z})} \leq \sum_{p \in \mathbb{N}} |c_p(z)| \|O_{p,N}w\|_{l^{\infty}(\mathbb{Z})} \leq \|(c_p(z))\|_{l^1(\mathbb{N})} \|w\|_{l^{\infty}(\mathbb{Z})}. \] 
To finish the proof of Lemma \ref{lemma_op_borne}, it is enough to see that
\[ \matr_{\mathscr{B}_N} O_{1,N} = \mathbf{I}_N- \frac12 \mathbf{A}_N \textrm{ and } \forall w \in \mathscr{E}_N, \ \|\matr_{\mathscr{B}_N}  w\|_{\infty} = \|w\|_{l^{\infty}(\mathbb{Z})}.  \]

\subsection{Proof of Theorem \ref{le_theoreme_diophantien}}
\label{proof of the Diophantine criterion}
It follows of the spectral decomposition of $\mathbf{A}_N$ \eqref{spectral_decompostion}, that $(\mathbf{e}^N_k)_{k=1\dots N}$ is an orthogonal basis of $\mathbb{C}^N$. Furthermore, a straightforward calculation shows that, if $k\in \llbracket 1,N \rrbracket $ then we have
\[ \| \mathbf{e}^N_k \|_2 = \sum_{j=1}^N |(\mathbf{e}^N_k)_j|^2 =  \sum_{j=1}^N  \sin^2(\pi hk j) =  \frac12 \sum_{j=1}^N 1-\cos(2\pi hk j)    = \frac{1}{2h}. \]
Consequently, if we take a vector $\mathbf{v}\in \mathbb{C}^N$, we get its discrete Fourier transform as
\[ \mathbf{v} =  2h \sum_{k=1}^N \mathbf{e}^N_k \sum_{j=1}^N \mathbf{v}_j  \sin(\pi hk j) .\]

\medskip

However, since the vectors $\mathbf{e}^N_k$ are eigenvectors of $\mathbf{A}_N$, there are eigenvectors of $P(\mathbf{A}_N)$ and their eigenvalues are $P(4\sin^2 \left( \frac{\pi}2 k h \right) )$. Consequently, we know from assumption $(iv)$ that $P(\mathbf{A}_N)$ is invertible and that we have
\[ P(\mathbf{A}_N)^{-1}  \mathbf{v} =   2h \sum_{k=1}^N \frac{e^N_k}{P \left( 4\sin^2 \left( \frac{\pi}2 k h \right)  \right)} \sum_{j=1}^N  \mathbf{v}_j  \sin(\pi hk j) .\]
Hence, if we do the estimation, $|\sin|\leq 1$, it comes
\[ \| P(\mathbf{A}_N)^{-1} \mathbf{v} \|_{\infty} \leq 2h \sum_{k=1}^N \frac{1}{|P \left( 4\sin^2 \left( \frac{\pi}2 k h \right)  \right)  |} \sum_{j=1}^N \|  \mathbf{v} \|_{\infty} \leq  \sum_{k=1}^N \frac{2}{|P \left( 4\sin^2 \left( \frac{\pi}2 k h \right)  \right) |} \| \mathbf{v} \|_{\infty}. \]
Consequently, to conclude the proof of Theorem \ref{le_theoreme_diophantien}, it is enough to proof that there exists a constant $c>0$ such that 
\begin{equation}
\label{the_sum}
\forall N\in \mathbb{N}^*, \ \sum_{k=1}^N \frac{2}{|P \left( 4\sin^2 \left( \frac{\pi}2 k h \right)  \right) |} \leq \frac{c}{\delta_{N+1}}.
\end{equation}

\medskip

But from the assumption $(iii)$, we know that there exists a polynomial $Q\in \mathbb{R}[X]$ such that
\begin{equation}
\label{cest_beau_detre_factoriel}
P(X) = Q(X) \prod_{\lambda \in \Lambda} (X-\lambda) \textrm{ and } \forall x\in [0,4], \  Q(x)\neq 0 . 
\end{equation}
Hence, we deduce from \eqref{cest_beau_detre_factoriel} that the following partial fraction decomposition holds
\[ \frac1{P(X)} = \frac1{Q(X)} \sum_{\lambda\in \Lambda} \frac{Q(\lambda)}{P'(\lambda)} \frac1{X-\lambda}.  \]
Consequently, to prove the estimation \eqref{the_sum}, it is enough to prove that
\begin{equation}
\label{the_sum_decomposee}
\forall \lambda\in \Lambda, \exists c>0,\forall N\in \mathbb{N}^*, \ \sum_{k=1}^N \frac1{|4\sin^2 \left( \frac{\pi}2 k h \right)-\lambda|} \leq c \frac{c}{\delta_{N+1}}.
\end{equation}

\medskip

To prove \eqref{the_sum_decomposee}, it is crucial to deduce, from the conditions $(i)$ and $(iv)$, that there exists a constant $c>0$ such that
\begin{equation}
\label{cest_quand_meme_pas_mieux_quavant}
 \forall q\in \mathbb{N}^*, \ \delta_{q} \leq \frac{c}{q^2}. 
\end{equation}
Then, it is enough, to distinguish the case $\lambda=0$ from the case $\lambda\neq 0$. On the one hand, if $\lambda=0$, using \eqref{cest_quand_meme_pas_mieux_quavant}, we have
\[  \sum_{k=1}^N \frac1{4\sin^2 \left( \frac{\pi}2 k h \right)} \leq  \sum_{k=1}^N \frac1{4 \left(  k h \right)^2} \leq \frac{\pi^2}6 \frac1{4h^2} \leq \frac{\pi^2}{24} \frac{c}{\delta_{N+1}}.   \]
On the other hand, $x\mapsto 4\sin^2 \left( \frac{\pi}2 x \right) $ is a diffeomorphism from $]0,1[$ to $]0,4[$. Hence, if $\lambda \neq 0$, and  since we know from assumption $(ii)$ that $\lambda\neq 4$, there exists a constant $\widetilde{c}>0$ such that one has
\[ \forall x\in [0,1], \  |x - \widetilde{\lambda}| \leq \widetilde{c}|4\sin^2 \left( \frac{\pi}2 x \right) - \lambda |, \]
where $\widetilde{\lambda}\in ]0,1[$ is defined by
\[  4\sin^2 \left( \frac{\pi}2 \widetilde{\lambda} \right) = \lambda. \]
Since $\delta$ does not have any zero index, the assumption $(iv)$ provides $\widetilde{\lambda}\notin \mathbb{Q}$. Hence, we deduce that
\[ \forall q\in \mathbb{N}^*, \exists ! p_q\in \llbracket 0,q \rrbracket, \ |\widetilde{\lambda}-\frac{p_q}{q}|<\frac1{2q}.  \]
As a consequence, with the estimation \eqref{cest_quand_meme_pas_mieux_quavant}, we have
\begin{align*}
\sum_{k=1}^N \frac1{|4\sin^2 \left( \frac{\pi}2 k h \right)-\lambda|}  \leq \sum_{k\in   \llbracket 1,N \rrbracket \setminus\{p_{N+1}\} }  \frac{ \widetilde{c}}{| kh - \widetilde{\lambda}  |} + \frac1{| 4\sin^2 \left( \frac{\pi}2 p_{N+1} h \right)  - \lambda  |}		 &\leq   \sum_{k\in  \llbracket 1,N \rrbracket \setminus\{p_{N+1}\} } \frac{2 \widetilde{c}}{h} +   \frac1{\delta_{N+1}}\\
																							 &\leq   \frac{2 \widetilde{c}}{h^2} + \frac1{\delta_{N+1}}
																							 \leq \frac{2 \widetilde{c} c+ 1}{\delta_{N+1}}.
\end{align*}
\bibliographystyle{plain}
\bibliography{CFD_bib}
\end{document}